\newtheorem{theorem}{Theorem}[section]
\newtheorem{corollary}[theorem]{Corollary}
\newtheorem{lemma}[theorem]{Lemma}
\newtheorem{proposition}[theorem]{Proposition}
\newtheorem{definition}[theorem]{Definition}
\newtheorem{example}[theorem]{Example}
\newcommand{\catt}{\ensuremath{\mathsf{CaTT}}\xspace}
\DeclarePairedDelimiter{\angl}{\langle}{\rangle}
\DeclareMathOperator{\Var}{Var}
\newcommand{\src}{\partial^-}
\newcommand{\tgt}{\partial^+}
\DeclareMathOperator{\id}{id}
\DeclareMathOperator{\op}{op}
\DeclareMathOperator{\coh}{coh}
\DeclareMathOperator{\comp}{comp}
\DeclareMathOperator{\supp}{supp}
\DeclareMathOperator{\inc}{in}
\DeclareMathOperator{\inj}{in}
\DeclareMathOperator{\depth}{depth}
\DeclareMathOperator{\U}{Up}
\DeclareMathOperator{\susp}{\Sigma}
\DeclareMathOperator{\padded}{\Theta}
\DeclareMathOperator{\EH}{EH}
\DeclareMathOperator{\eh}{H}
\DeclareMathOperator{\hexcomp}{hexcomp}
\DeclareMathOperator{\repad}{\Theta}
\DeclareMathOperator{\Ctx}{Ctx}
\DeclareMathOperator{\Ty}{Ty}
\DeclareMathOperator{\Tm}{Tm}
\DeclareMathOperator{\Sub}{Sub}
\newcommand{\ehctx}{\mathbb{E}}
\newcommand{\ehty}{E}
\newcommand{\inl}{\inj^-}
\newcommand{\disc}{\mathbb{D}}
\newcommand{\sphere}{\mathbb{S}}
\newcommand{\spheretype}{S}
\newcommand{\N}{\ensuremath{\mathbb{N}}}
\newcommand{\obj}{\ensuremath{\star}}
\newcommand{\emptycontext}{\ensuremath{\varnothing}}
\newcommand{\ps}{\mathsf{ps}}
\newcommand{\vdashps}{\ensuremath{\vdash_\ps}}
\newcommand{\sub}{\angl}
\newcommand{\point}{\mathbb{P}}
\newcommand{\fun}{\overrightarrow}
\newcommand{\s}{*}
\newcommand{\cir}{\mathop{\circ}}
\newcommand{\boundarycolour}{\color{black}}
\newcommand{\sourcecolour}{\color{black}}
\newcommand{\targetcolour}{\color{black}}
\DeclarePairedDelimiter{\app}{\llbracket}{\rrbracket}
\newcommand{\inv}[1]{{#1}^{-1}}
\tikzset{every arrow/.append style = -{Latex[scale=0.7]}}
\def\bdot{\text{\scriptsize\textbullet}}
\lstdefinestyle{cattstyle}{
  basicstyle={\ttfamily},
  keywordstyle={\color{black}\ttfamily\bfseries},
}
\lstdefinelanguage{catt}{
  sensitive=false,
  keywords={catt,coh,let,check,benchmark},
  otherkeywords={@}
}
\journal{Advances in Mathematics}
\begin{document}

\begin{frontmatter}

\title{
  {\Large \bf Beyond Eckmann-Hilton:
  \texorpdfstring{\\}{}
  Commutativity in Higher Categories}
}

\author{Thibaut Benjamin}
\ead{thibaut.benjamin@universite-paris-saclay.fr}

\affiliation{
  organisation={
    Université Paris-Saclay, CNRS,
    ENS Paris-Saclay, LMF},
  addressline={4 avenue des Sciences},
  postcode={91190},
  city={Gif-sur-Yvette},
  country={France}
}

\author{Ioannis Markakis}
\ead{ioannis.markakis@cl.cam.ac.uk}
\author{Wilfred Offord}
\ead{wgbo2@cam.ac.uk}
\author{Chiara Sarti}
\ead{cs2197@cam.ac.uk}
\author{Jamie Vicary}
\ead{jamie.vicary@cl.cam.ac.uk}

\affiliation{
  organisation={
    Department of Computer Science and Technology,
    University of Cambridge},
  addressline={15 JJ Thomson Ave},
  postcode={CB3 0FD},
  city={Cambridge},
  country={UK}
}

\begin{abstract}
  We show that in a weak globular $\omega$-category, all composition operations are equivalent and commutative for cells with sufficiently degenerate boundary, which can be considered a higher-dimensional generalisation of the Eckmann-Hilton argument. Our results are formulated constructively in a type-theoretic presentation of $\omega$-categories. The heart of our construction is a family of padding and repadding techniques, which gives an equivalence relation between cells which are not necessarily parallel.   Our work has been implemented, allowing us to explicitly compute suitable witnesses, which grow rapidly in complexity as the dimension increases. These witnesses can be exported as inhabitants of identity types in Homotopy Type Theory, and hence are of relevance in synthetic homotopy theory.
  \end{abstract}
  \begin{keyword}
    higher categories\sep
    weak \(\omega\)\=/categories \sep
    Eckmann-Hilton \sep
    Homotopy Type Theory \sep
    formalisation
    \MSC[2020]
      18N65 \sep 
      18N30 \sep 
      03B38 \sep 
      68V20 
  \end{keyword}

\end{frontmatter}

\section{Introduction}

\subsection{Motivation}

\noindent
Since Grothendieck's original conception of (weak, globular) $\omega$-groupoids, a principal goal of higher category theory has been to give an algebraic language for homotopy theory: ``The task of
working out the foundations of tame topology, and a corresponding
structure theory for `stratified (tame) spaces', seems to me a lot more
urgent and exciting still than any program of homological, homotopical 
or topological algebra"~\cite[p. 2]{grothendieck_pursuing_1983}. One expression of this idea is the \textit{homotopy hypothesis}, which says that $\omega$-groupoids should be the same as homotopy types, and remains unproven in this setting. The definition of $\omega$\=/groupoid has since been generalised by Batanin into that of a weak $\omega$-category~\cite{batanin_monoidal_1998}, and further key contributions have been made by Leinster, Maltsiniotis and Ara \cite{leinster_operads_2000, maltsiniotis_grothendieck_2010, ara_infty_2010} among many others.


 However, using these theories to produce explicit homotopy witnesses is challenging, and therefore Grothendieck's original goal is largely unrealised. One homotopical argument with an $\omega$\=/categorical analogue is the well-known Eckmann-Hilton argument~\cite{Eckman1961StructureMI}, which is traditionally used to establish commutativity of higher homotopy groups of topological spaces~\cite{hatcher_algebraic_2002}. In our setting, the statement of Eckmann-Hilton is that any $2$\=/cells whose boundaries are identity cells commute with each other, up to isomorphism, under (vertical) composition, and moreover their vertical and horizontal composites are equivalent, up to some unitor isomorphisms. In both cases, while the homotopical argument is simple, constructing the isomorphism witnessing it in the weak $\omega$\-/categorical setting is not trivial. Once constructed, however, suspending it produces isomorphisms witnessing the commutativity of any $n$\-/cells with identity boundary under composition in the $(n-1)$\textsuperscript{th}  direction.


Homotopically, it is obvious that the Eckmann-Hilton argument should extend to composites of $n$-cells with sufficiently degenerate boundaries in all $n$ possible composition directions. Indeed, if our definition of $\omega$\=/category is to satisfy the homotopy hypothesis, such composites should all be commutative and equivalent in the appropriate sense. However, this has never previously been established. 

In this paper, we  demonstrate commutativity and equivalence of these composites for the first time.
This provides strong evidence that the algebraic model of \(\omega\)\=/categories behaves in the way that we expect.
Our work exploits a recent type-theoretic approach to $\omega$\-/categories due to Finster and Mimram~\cite{finster_typetheoretical_2017,benjamin_type_2020}.
We crucially leverage a recent construction called \emph{naturality}~\cite{benjamin_naturality_2025}, a non-trivial meta-operation on the type theory, which constructs fillers for certain cylinder types.
We have implemented our construction, allowing us to compute, in principle, witnesses of these results in any dimension, and by the work of Benjamin~\cite{benjamin_generating_2024} export them into Homotopy Type Theory~\cite{theunivalentfoundationsprogram_homotopy_2013}.

\subsection{Globular \texorpdfstring{$\omega$\=/}{ω-}categories}

\noindent
In the globular setting, for $n>0$, an $n$\=/cell $a$ has source and target $(n-1)$-cells $\src(a)$, $\tgt(a)$, which we denote as
$a : \src(a) \to \tgt(a)$. For $n>1$ we impose the \textit{globularity condition}, that whenever $a: u \to v$, then $u$ and $v$ have the same boundary.
We may further define the $k$\=/dimensional source and target $\src_k(a)$,
$\tgt_k(a)$ for any $0 \leq k < n$ by taking successive boundaries.

The structure of a higher category allows $n$\=/cells to be composed in a variety of ways. In particular, if $a,b$ are $n$\=/cells with  $\tgt_k(a) = \src_k(b)$, for $0 \leq k < n$ we may form their \textit{binary $k$\=/composite} $a \s_k b$, which we understand as ``gluing'' $a$ and $b$  along their common $k$\=/dimensional boundary. We illustrate this in Figures~\ref{fig:1comp} and \ref{fig:0comp}, which show the 1- and 0\=/composites, respectively, of a pair of 2-cells.

\begin{figure}[t]
    \begin{subfigure}[valign=c]{.49\textwidth}
    \[\begin{tikzcd}[ampersand replacement=\&]
    \bdot \& \bdot
    \arrow[""{name=0, anchor=center, inner sep=0}, from=1-1, to=1-2]
    \arrow[""{name=1, anchor=center, inner sep=0}, bend left = 60, from=1-1, to=1-2]
    \arrow[""{name=2, anchor=center, inner sep=0}, bend right = 60, from=1-1, to=1-2]
    \arrow["a", shorten <=2pt, shorten >=2pt, Rightarrow, from=1, to=0]
    \arrow["b", shorten <=2pt, shorten >=2pt, Rightarrow, from=0, to=2]
    \end{tikzcd}\]

    \caption{$a \s_1 b$} \label{fig:1comp}
    \end{subfigure}
    \begin{subfigure}[valign=c]{.49\textwidth}
    \[\begin{tikzcd}[ampersand replacement=\&]
        \bdot \& \bdot \& \bdot
        \arrow[""{name=0, anchor=center, inner sep=0}, bend left = 45, from=1-1, to=1-2]
        \arrow[""{name=1, anchor=center, inner sep=0}, bend right = 45, from=1-1, to=1-2]
        \arrow[""{name=2, anchor=center, inner sep=0}, bend left = 45, from=1-2, to=1-3]
        \arrow[""{name=3, anchor=center, inner sep=0}, bend right = 45, from=1-2, to=1-3]
        \arrow["a", shorten <=3pt, shorten >=3pt, Rightarrow, from=0, to=1]
        \arrow["b", shorten <=3pt, shorten >=3pt, Rightarrow, from=2, to=3]
\end{tikzcd}\]

    \caption{$a \s_0 b$} \label{fig:0comp}
    \end{subfigure}
    \caption{\centering Composites of a pair of $2$\=/cells.}
\end{figure}
The axioms of a higher category also give us access to a class of cells, called \emph{coherences}, that serve as witnesses that their source and target are in some sense ``equivalent''. To give some first examples, given a $0$\=/cell $x$, we may construct the \emph{identity}, a $1$\=/cell $\id_x : x \to x$, and the \emph{unbiased unitor}, a 2\=/cell \mbox{$u_x : \id_x \s_0 \id_x \to \id_x  $}; and given a 1\=/cell $f:x \to y$, we may construct the \emph{left unitor} $\lambda_f : \id_x \s_0 f \to f$, and the \textit{right unitor} $\rho_f : f \s_0 \id_x \to f$. All such coherences are part of a broader class of cells called
\emph{equivalences}, which are cells with \emph{inverses} satisfying a cancellation law up to higher equivalences.
For an equivalence $e : u \to v$, we denote its inverse $e^{-1} : v \to u$.

In a higher category, there is no reason to expect that the composition operations are related. For example, for $2$\=/cells $a$ and $b$, we do not in general expect an equivalence between $a \s_0 b$ and $a \s_1 b$ when they are both defined; indeed testing these cells for equivalence would not usually make sense, as they have different boundaries. However, if the cell boundaries are degenerate -- that is, given by identities -- we can make this question well-posed by composing $a \s_0 b$ with  the coherences $u_x$ and $u_x^{-1}$, to change its boundary, a procedure which we call \emph{padding}. We can then construct the following equivalence, which we illustrate in Figure~\ref{fig:eh-type}: $$\eh(a,b) : a \s_1 b \; \to\;  u_x^{-1} \s_1 (a \s_0 b) \s_1 u_x$$
Furthermore, applying a duality construction, we obtain a dual equivalence: $$\eh^{\op}(a,b): a \s_1 b \;\to\; u_x^{-1}
\s_1 (b \s_0 a) \s_1 u_x$$ Finally, we may combine these to obtain the following equivalence:
\[
\EH(a,b) :=\eh(a,b) \star_2 (\eh^{\op}(b,a))^{-1} : a \s_1 b \;\to\; b \s_1 a
\]
Hence we conclude that the operation $\s_1$ is commutative up to equivalence. This construction has been described, for instance, by Cheng and Gurski~\cite{cheng_periodic_2007}, and can be seen as a categorical version of the classical Eckmann-Hilton argument. 

\begin{figure}[t]
    \centering
\[\begin{tikzcd}[ampersand replacement=\&]
        \bdot \& \bdot \& \bdot \& \bdot \& \bdot
        \arrow[""{name=0, anchor=center, inner sep=0}, bend left = 60,  from=1-1, to=1-2]
        \arrow[""{name=1, anchor=center, inner sep=0},  from=1-1, to=1-2]
        \arrow[""{name=8, anchor=center, inner sep=0},  bend right = 60, from=1-1, to=1-2]
        \arrow["\text{H}", Rightarrow, scaling nfold=3, from=1-2, to=1-3]
        \arrow[""{name=2, anchor=center, inner sep=0}, bend left = 45,  from=1-3, to=1-4]
        \arrow[""{name=3, anchor=center, inner sep=0}, bend right = 45,  from=1-3, to=1-4]
        \arrow[""{name=4, anchor=center, inner sep=0}, bend left = 90,  from=1-3, to=1-5]
        \arrow[""{name=5, anchor=center, inner sep=0}, bend right = 90,  from=1-3, to=1-5]
        \arrow[""{name=6, anchor=center, inner sep=0}, bend left = 45,  from=1-4, to=1-5]
        \arrow[""{name=7, anchor=center, inner sep=0}, bend right = 45,  from=1-4, to=1-5]
        \arrow["a", shorten <=1pt, shorten >=3pt, Rightarrow, from=0, to=1]
        \arrow["b", shorten <=3pt, shorten >=1pt, Rightarrow, from=1, to=8]
        \arrow["a", shorten <=3pt, shorten >=3pt, Rightarrow, from=2, to=3]
        \arrow["{u_x ^{-1}}", shorten <=3pt, Rightarrow, from=4, to=1-4]
        \arrow["b", shorten <=3pt, shorten >=3pt, Rightarrow, from=6, to=7]
        \arrow["u_x", shorten >=3pt, Rightarrow, from=1-4, to=5]
\end{tikzcd}\]

    \caption{\centering The equivalence $\eh : a \s_1 b \to u_x^{-1} \s_1 (a \s_0 b) \s_1 u_x$.}
    \label{fig:eh-type}
\end{figure}

\subsection{Results}

\noindent
In this paper, we use a type-theoretic definition of $\omega$\=/category to generalise this construction, producing a parameterised family of equivalences as follows, where $a,b$ are $n$\=/cells with fully degenerate boundaries\footnote{For $n>1$, an $n$\=/cell is \textit{fully degenerate} when it is of the form $\id^{n}_x :=
{\id(\dots(\id} \hspace{0pt}_x)\dots)$, where $x$ is a $0$\=/cell.}, and where $0 \leq k,l < n$ are distinct composition directions:
\[
\eh ^n_{k,l}(a,b) : a \s_k b \;\to\; \padded^n_{k,l}(a \s_l b)
\]
Here $\Theta_{p,q}^n$ is a \emph{padding} operation, which composes its argument with coherences, generalising our earlier use of $u_x$ and $u_x^{-1}$. Our padding technique is inspired by similar constructions in the work of Finster et al.~\cite{finster_type_2022} and Fujii et al.\cite{fujii_equivalences_2024}, and includes these as a special case.

We interpret the cells $\eh^n_{k,l}(a,b)$ as a \emph{congruence} between the composites $a*_k b$ and $a*_l b$. Congruence is a relation we introduce, extending the notion of equivalence to cells with different boundaries. We define it as the smallest equivalence relation on $n$\=/cells extending equivalence such that any cell is congruent to its composite in any dimension with a coherence. We note that in a strict $\omega$\=/category, coherence cells are all identities, and so congruence and equivalence coincide. Thus congruence is a minimal extension of equivalence that allows \enquote{fixing} the boundaries of cells using coherences.

We can apply a duality operation as in dimension $2$ to produce another $({n\!+\!1})$\=/cell with type  $ b \s_k a \to \padded^n_{k,l} ({ a\s_l b})$, and compose to obtain an equivalence:
\[
\EH^n_{k,l}({a,b}) : a \s_k b \;\to\; b \s_k a
\]
This witnesses  that composition of $n$\=/cells with degenerate boundary is {{commutative}} in any choice of direction $k$, up to equivalence. Such a construction can reasonably be described  as a \textit{higher Eckmann-Hilton argument}. Furthermore, we have a family of such witnesses, according to the choice of the second parameter $l$.

Furthermore, our $\eh^n_{k,l}$ construction can be extended to the case where paddings can appear on both sides, yielding cells $\eh^{n}_{p,k,l}$ as follows:
\[
\eh^{n}_{p,k,l} : \padded^n_{p,k}(a \s_k b) \;\to\; \padded^n_{p,l}(a \s_l b)
\]
When $p=k$ the operation $\Theta^n_{p,k}(-)$ becomes the identity, and so this construction is an extension of our main result, with:
\begin{align*}
    \eh^n_{k,k,l} &= \eh^n_{k,l} & \eh^n_{l,k,l} &= (\eh^n_{k,l}) ^{-1}
\end{align*}
These cells $\eh^n_{p,k,l}$ together with their opposites can be arranged in a structure we  call the \textit{Eckmann-Hilton sphere}, which we illustrate for $n=3$ in Figure~\ref{ehsphere}. This structure has been studied before in semi-strict models \cite{cheng_higher-eh_2024}, but not in the fully weak setting.

\begin{figure}
    \centering
\adjustbox{scale=0.8}{$\begin{tikzcd}[ampersand replacement=\&, row sep = .7cm, column sep = 0.20 cm, scale=0.8, font=\normalfont]
        \&\& {\padded^3_{2,1}(a \s_1 b)} \\
        \&\&\& {\padded^3_{2,0}(a \s_0 b)} \\
        {a \s_2 b} \&\&\&\& {b \s_2 a} \\
        \& {\padded^3_{2,0}(b \s_0 a)} \\
        \&\& {\padded^3_{2,1}(b \s_1 a)}
        \arrow[bend left = 35, from=1-3, to=3-5]
        \arrow[curve={height=-24pt},  to=2-4, from=3-1]
        \arrow[curve={height=24pt}, from=1-3, to=4-2,crossing over]
        \arrow[curve={height=-24pt}, to=5-3, from=2-4]
        \arrow[curve={height=24pt}, from=4-2, to=3-5, crossing over]
        \arrow[bend left = 20,  to=2-4, from=1-3]
        \arrow[bend left = 35, from=3-1, to=1-3]
        \arrow[bend right = 20, shorten <=4pt, from=3-1, to=4-2]
        \arrow[bend left = 20, shorten <=4pt,  to=3-5, from=2-4]
        \arrow[bend right = 35,  to=3-5, from=5-3]
        \arrow[bend right = 20, from=4-2, to=5-3]
        \arrow[bend right = 35,  to=5-3, from=3-1]
\end{tikzcd}$}
    \caption{\centering The $3$-dimensional Eckmann-Hilton sphere.}
    \label{ehsphere}
\end{figure}

\subsection{Type theory}

\noindent
We work in the type theory \catt due to Finster and Mimram~\cite{finster_typetheoretical_2017}, whose models correspond to Grothendieck-Maltsiniotis~\cite{benjamin_globular_2024} or to Batanin-Leinster $\omega$\=/categories~\cite{ara_infty_2010, bourke_iterated_2020}. {Due to these equivalences, our results are valid in any of those models.} Benjamin et. al.~\cite{benjamin_catt_2024} have further shown that the type-theoretical approach to higher category theory is equivalent to the \emph{computadic} approach of Dean et. al.~\cite{dean_computads_2024}, and thanks to this, many results for computads carry over directly to our setting. We will we will freely quote their implications for \catt{}. In particular, we will use the suspension and the opposite operations~\cite{benjamin_hom_2024} for \catt{}, as well as the invertibility constructions of Benjamin and Markakis~\cite{benjamin_invertible_2024}, which have been constructed using the computadic approach.

The  theory \catt has an interpretation in Homotopy Type Theory~\cite{benjamin_generating_2024}, and as a result our constructions can be exported as inhabitants of identity types. The Eckmann-Hilton cell $\EH^2_{1,0}$ has been previously constructed in Homotopy Type Theory~\cite{theunivalentfoundationsprogram_homotopy_2013}, which easily yields $\EH^n_{n-1,n-2}$ by changing the base type. However, for the remaining cases of $\text{EH}^n_{k,l}$, this is the first explicit algebraic construction. Furthermore, our setting is more general than that of Homotopy Type Theory in two ways. Firstly, it is \emph{fully weak}, containing equivalences such as the unbiased unitor $u_x : \id_x\to\id_x *_0 \id_x $ that are identities in Homotopy Type Theory. Secondly, it is \emph{directed}, i.e.\ not all cells have to be invertible. Thus our construction applies to a wider class of examples.

The type theory \catt has been implemented as a proof assistant~\cite{benjamin_catt_software_2024}, in which we have automated our construction as a meta-operation. This allows us to evaluate the cells $\eh_{k,l}^n$ for any given parameter values, and in Appendix~\ref{sec:implementation} we show that the complexity of these composites grows quickly with dimension. Due to their intricate structure, defining these cells directly by hand would likely not be feasible.

\begin{figure}[t]
    \centering
\[\adjustbox{scale=0.75}{\begin{tikzcd}[ampersand replacement=\&, column sep = 0.7cm]
        \&\& \bdot \& \bdot \& \bdot \& \bdot \& \bdot \& \bdot \\[6pt]
        \bdot \& \bdot \&{}\& {\s_1} \&{}\&{}\& {\s_1} \&{}\& \bdot \& \bdot \& \bdot \& \bdot \& \bdot \& \bdot \\[6pt]
        \&\& \bdot \& \bdot \& \bdot \& \bdot \& \bdot \& \bdot
        \arrow[""{name=0, anchor=center, inner sep=0}, bend left = 45,  from=1-3, to=1-4]
        \arrow[""{name=1, anchor=center, inner sep=0}, bend right = 45,  from=1-3, to=1-4]
        \arrow[""{name=2, anchor=center, inner sep=0}, bend left = 90, looseness=1.2,  from=1-3, to=1-5]
        \arrow[""{name=3, anchor=center, inner sep=0, looseness=1.2}, bend right = 90,  from=1-3, to=1-5, looseness=1.2]
        \arrow[""{name=4, anchor=center, inner sep=0}, bend left = 45,  from=1-4, to=1-5]
        \arrow[""{name=5, anchor=center, inner sep=0}, bend right = 45,  from=1-4, to=1-5]
        \arrow[""{name=6, anchor=center, inner sep=0}, bend left = 45,  from=1-6, to=1-7]
        \arrow[""{name=7, anchor=center, inner sep=0}, bend right = 45,  from=1-6, to=1-7]
        \arrow[""{name=8, anchor=center, inner sep=0}, bend left = 90,  from=1-6, to=1-8, looseness=1.2]
        \arrow[""{name=9, anchor=center, inner sep=0}, bend right = 90,  from=1-6, to=1-8, looseness=1.2]
        \arrow[""{name=10, anchor=center, inner sep=0}, bend left = 45,  from=1-7, to=1-8]
        \arrow[""{name=11, anchor=center, inner sep=0}, bend right = 45,  from=1-7, to=1-8]
        \arrow[""{name=12, anchor=center, inner sep=0}, bend left = 80,  from=2-1, to=2-2, looseness=1.2]
        \arrow[""{name=13, anchor=center, inner sep=0}, bend right = 80,  from=2-1, to=2-2, looseness=1.2]
        \arrow[""{name=14, anchor=center, inner sep=0},  from=2-1, to=2-2]
        \arrow["{X_1}"{yshift=5pt}, Rightarrow, scaling nfold=3, from=2-2, to=2-3]
        \arrow["{X_2}"{yshift=5pt}, Rightarrow, scaling nfold=3, from=2-5, to=2-6]
        \arrow[""{name=15, anchor=center, inner sep=0}, bend left = 45,  from=3-3, to=3-4]
        \arrow[""{name=16, anchor=center, inner sep=0}, bend right = 45,  from=3-3, to=3-4]
        \arrow[""{name=17, anchor=center, inner sep=0}, bend left = 90,  from=3-3, to=3-5, looseness=1.2]
        \arrow[""{name=18, anchor=center, inner sep=0}, bend right = 90,  from=3-3, to=3-5, looseness=1.2]
        \arrow[""{name=19, anchor=center, inner sep=0}, bend left = 45,  from=3-4, to=3-5]
        \arrow[""{name=20, anchor=center, inner sep=0}, bend right = 45,  from=3-4, to=3-5]
        \arrow[""{name=21, anchor=center, inner sep=0}, bend left = 45,  from=3-6, to=3-7]
        \arrow[""{name=22, anchor=center, inner sep=0}, bend right = 45,  from=3-6, to=3-7]
        \arrow[""{name=23, anchor=center, inner sep=0}, bend left = 90,  from=3-6, to=3-8, looseness=1.2]
        \arrow[""{name=24, anchor=center, inner sep=0}, bend right = 90,  from=3-6, to=3-8, looseness=1.2]
        \arrow[""{name=25, anchor=center, inner sep=0}, bend left = 45,  from=3-7, to=3-8]
        \arrow[""{name=26, anchor=center, inner sep=0}, bend right = 45,  from=3-7, to=3-8]
        \arrow["{X_3}"{yshift=5pt},  Rightarrow, scaling nfold=3, from=2-8, to=2-9]
        \arrow[""{name=27, anchor=center, inner sep=0}, shift left = 2pt, shorten <=-0, shorten >=-0, bend left = 75,  from=2-9, to=2-10, looseness=1.2]
        \arrow[""{name=28, anchor=center, inner sep=0}, shift right = 2pt, shorten <=-0, shorten >=-0, bend right = 75,  from=2-9, to=2-10, looseness=1.2]
        \arrow[""{name=29, anchor=center, inner sep=0},  from=2-9, to=2-10]
        \arrow[""{name=30, anchor=center, inner sep=0}, shift left = 5pt, shorten <=-3pt, shorten >=-3pt, bend left = 90,  from=2-9, to=2-11, looseness=1.2]
        \arrow[""{name=31, anchor=center, inner sep=0}, shift right = 5pt, shorten <=-3pt, shorten >=-3pt, bend right = 90,  from=2-9, to=2-11, looseness=1.2]
        \arrow[""{name=32, anchor=center, inner sep=0},shift left = 2pt, shorten <=-0, shorten >=-0, bend left = 75,  from=2-10, to=2-11, looseness=1.2]
        \arrow[""{name=33, anchor=center, inner sep=0}, shift right = 2pt, shorten <=-0, shorten >=-0, bend right = 75,  from=2-10, to=2-11, looseness=1.2]
        \arrow[""{name=34, anchor=center, inner sep=0},  from=2-10, to=2-11]
        \arrow["{X_4}"{yshift=5pt}, Rightarrow, scaling nfold=3, from=2-11, to=2-12]
        \arrow[""{name=35, anchor=center, inner sep=0}, bend left = 45,  from=2-12, to=2-13]
        \arrow[""{name=36, anchor=center, inner sep=0}, bend right = 45,  from=2-12, to=2-13]
        \arrow[""{name=37, anchor=center, inner sep=0}, bend left = 90,  from=2-12, to=2-14, looseness=1.2]
        \arrow[""{name=38, anchor=center, inner sep=0}, bend right = 90,  from=2-12, to=2-14, looseness=1.2]
        \arrow[""{name=39, anchor=center, inner sep=0}, bend left = 45,  from=2-13, to=2-14]
        \arrow[""{name=40, anchor=center, inner sep=0}, bend right = 45,  from=2-13, to=2-14]
        \arrow["{(\rho^1_{\id_x})^{\text{-}1}}", shorten <=2pt, Rightarrow, from=2, to=1-4]
        \arrow["a", shorten <=3pt, shorten >=3pt, Rightarrow, from=0, to=1]
        \arrow["\rho^1_{\id_x}", shorten >=2pt, Rightarrow, from=1-4, to=3]
        \arrow["\id^2_x", shorten <=3pt, shorten >=3pt, Rightarrow, from=4, to=5]
        \arrow["{u_x^{-1}}", shorten <=2pt, Rightarrow, from=8, to=1-7]
        \arrow["a", shorten <=3pt, shorten >=3pt, Rightarrow, from=6, to=7]
        \arrow["u_x", shorten >=2pt, Rightarrow, from=1-7, to=9]
        \arrow["\id^2_x", shorten <=3pt, shorten >=3pt, Rightarrow, from=10, to=11]
        \arrow["a", shorten <=2pt, shorten >=2pt, Rightarrow, from=12, to=14]
        \arrow["b", shorten <=2pt, shorten >=2pt, Rightarrow, from=14, to=13]
        \arrow["{(\lambda^1_{\id_x})^{\text{-}1}}", shorten <=2pt, Rightarrow, from=17, to=3-4]
        \arrow["\id^2_x", shorten <=3pt, shorten >=3pt, Rightarrow, from=15, to=16]
        \arrow["\lambda^1_{\id_x}", shorten >=2pt, Rightarrow, from=3-4, to=18]
        \arrow["b", shorten <=3pt, shorten >=3pt, Rightarrow, from=19, to=20]
        \arrow["{u_x^{-1}}", shorten <=2pt, Rightarrow, from=23, to=3-7]
        \arrow["\id^2_x", shorten <=3pt, shorten >=3pt, Rightarrow, from=21, to=22]
        \arrow["u_x", shorten >=2pt, Rightarrow, from=3-7, to=24]
        \arrow["b", shorten <=3pt, shorten >=3pt, Rightarrow, from=25, to=26]
        \arrow["a"{pos=0.6}, shorten <=2pt, shorten >=2pt, Rightarrow, from=27, to=29]
        \arrow["\id^2_x"{pos=0.3}, shorten <=2pt, shorten >=2pt, Rightarrow, from=29, to=28]
        \arrow["{u_x^{-1}}"{pos=0.3}, shorten <=2pt, shorten >=10pt, Rightarrow, from=30, to=2-10]
        \arrow["b"{pos=0.3}, shorten <=2pt, shorten >=2pt, Rightarrow, from=34, to=33]
        \arrow["\id^2_x"{pos=0.6}, shorten <=2pt, shorten >=2pt, Rightarrow, from=32, to=34]
        \arrow["u_x"{pos=0.6}, shorten >=2pt, shorten <=10pt, Rightarrow, from=2-10, to=31]
        \arrow["a", shorten <=3pt, shorten >=3pt, Rightarrow, from=35, to=36]
        \arrow["{u_x^{-1}}", shorten <=2pt, Rightarrow, from=37, to=2-13]
        \arrow["b", shorten <=3pt, shorten >=3pt, Rightarrow, from=39, to=40]
        \arrow["u_x", shorten >=2pt, Rightarrow, from=2-13, to=38]
\end{tikzcd}}\]
    \caption{\centering Construction of the base cases $\eh^n_{n-1,0}$ for the case $n=2$.}
    \label{2dEH}
\end{figure}

\subsection{Technical approach}

\noindent
Our main result is Theorem~\ref{thm:half-eh}, which demonstrates congruence for $a \s_k b$ and $a \s_l b$ when $a,b$ have sufficiently degenerate boundary, by constructing an equivalence as follows:
\[
  \eh^{n}_{k,l}(a,b) : a\s_{k} b \;\to\; \padded^{n}_{k,l}(a\s_{l} b)
\]
These cells are constructed by induction, and the primary technical obstacle is definition of the base cases  \(\eh^{n}_{n-1,0}(a,b) \) and \(\eh^{n}_{0,n-1}(a,b) \). Once these cases are established, we obtain the remaining cases using a \emph{naturality} operation which allows us to construct $\eh^{n+1}_{k,l}(a,b) $ from $\eh^n_{k,l}(a,b) $, and a \emph{suspension} operation which allows us to construct $\eh^{n+1}_{k+1,l+1}(a,b)$ from $\eh^n_{kl}(a,b)$.

Here we give an outline of our approach to defining the base cases, to serve as an informal preview of the full construction given in Section~\ref{padding}. The base case $\eh^n_{n-1,0}(a,b)$ is constructed as the following composite, which we illustrate in Figure~\ref{2dEH}:
\[
\eh ^n_{n-1,0}(a,b) = X_1 \s_n X_2 \s_n X_3 \s_n X_4
\]
We now examine these four steps in turn.

\paragraph{Step 1. Application of Generalised Unitors} In the first step, our goal is to apply equivalences to transform the cells $a$ and $b$ into their identity composites $a \s_0 \id^n_x$ and $\id_x^n \s_0 b$, respectively. This cannot be done directly, as these cells do not have the same boundary. Instead we build coherences as follows, which we call \emph{generalised unitors}:
\begin{align*}
  \rho^n_a &:  \padded^n_{\rho}(a *_0 \id^n_x) \to a
  \\
  \lambda^n_b &: \Theta^n_\lambda(\id^n_x \s_{0} \, b) \to b
\end{align*}
Here $\Theta^n_\rho(-)$ and $\Theta^n_\lambda(-)$ are padding operations which iteratively compose their argument with generalised right or left unitors respectively. This yields the following:
\begin{align*}
 X_1 &: a \s_{n-1} b \to \padded_\rho^n(a *_0 \id^n_x) \s_{n-1} \padded_\lambda^n(\id^n_x \s_{0} \, b) \\
 X_1 &= (\rho^n_a)^{-1} \s_{n-1} (\lambda^n_b)^{-1}
\end{align*}

\paragraph{Step 2. Repadding} For the next step, we  apply equivalences which modify the generalised left and right unitors, transforming them into generalised \textit{unbiased} unitors. This ``unbiasing'' process is related to the familiar coherence equation $\rho_{I} = \lambda_{I}$ of monoidal categories~\cite[Equation~5.2]{maclane_naturality_1963}, which recognizes that left and right unitors become equivalent at the monoidal unit $I$. This yields equivalences as follows:
\begin{align*}
  \repad^{n}_{\rho\to u}(a\s_{0}\id^{n}_{x})&: \padded^{n}_{\rho}(a\s_{0}\id^{n}_{x}) \to
  \padded^{n}_{n-1,0}(a\s_{0}\id^{n}_{x})
  \\
  \repad^{n}_{\lambda \to u}(\id^{n}_{x}\s_{0} \, b)&:
  \padded^{n}_{\lambda}(\id^{n}_{x}\s_{0} \, b) \to
  \padded^{n}_{n-1,0}(\id^{n}_{x}\s_{0} \, b)
\end{align*}
which comprise our next cell in the composite:
\begin{align*}
  X_2 &: \padded^n_\rho(a *_0 \id^n_x) \s_{n-1} \padded^n_\lambda(\id^n_x \s_{0} \, b)
  \to \padded^n_{n-1,0}(a *_0 \id^n_x) *_{n-1} \padded^n_{n-1,0}(\id^n_x *_0 \, b) \\
  X_2 &= \repad^{n}_{\rho\to u}(a\s_{0}\id^{n}_{x}) \s_{n-1} \repad^{n}_{\lambda\to u}(\id^{n}_{x}\s_{0} \, b)
\end{align*}

\paragraph{Step 3. Pseudofunctoriality of Padding} At this point, both
\(a\s_{0}\id^{n}_{x}\) and \(\id^{n}_{x} \s_{0} b\) are present in the term with the same
unbiased padding.
This allows us to apply a \emph{pseudofunctoriality of padding} construction \(\Xi\),
which relates the composite of paddings to the padding of the composite.
This construction takes the form of a cell:
\begin{align*}
X_3 &:
  \padded^n_{n-1,0}(a \s_0 \id^n_x) \s_{n-1} \padded^n_{n-1,0}(\id^n_x \s_0 b)
\to \padded^n_{n-1,0}((a \s_0 \id^n_x)\s_{n-1}(\id^n_x \s_0 b)) \\
X_3 &= \Xi^{n}_{n-1,0}(a \s_0 \id^n x, \id^n_x \s_0 b)
\end{align*}
\paragraph{Step 4. Interchanger}
In the final step, we apply the standard interchanger coherence, which recognizes that the following composites are  equivalent:
\[
  \zeta^n : (a \s_0 \id^n_x)*_{n-1}(\id^n_x *_0 b) \to a *_0 b
\]
We apply an unbiased padding to this interchanger to obtain the final cell in our composite. {Below, the notation $\uparrow$ indicates the \emph{functorialisation} construction, which we describe in Section~\ref{sec:functorialisation}:}
\begin{align*}
X_4 &: \padded^n_{n-1,0}((a \s_0 \id^n_x)*_{n-1}(\id^n_x *_0 b))
\to \Theta^n_{n-1,0} (a \s_0 b)\\
X_4 &= (\padded^n_{n-1,0}\uparrow v^n_0)\app{\zeta^n}
\end{align*}
The construction of the cell $\eh^n_{0,n-1}(a,b)$ makes use of similar components.
This concludes the overview of our construction.

\section{CaTT and Previous Work}\label{cattbackground}

\noindent
The type theory $\catt$ gives a convenient inductive syntax for the theory of $\omega$\=/categories. More specifically, contexts $\Gamma$ of $\catt$ correspond to \emph{finite computads}, which are finite generating data for free $\omega$\=/categories.  Terms $\Gamma \vdash t : A$ of $\Gamma$ correspond to the cells of the $\omega$\=/category generated by $\Gamma$, with the type $A$ indicating the source, target, and dimension of $t$. Models of $\catt$ in the sense of Dybjer~\cite{dybjer_internal_1996} are exactly $\omega$\=/categories in the sense of Grothendieck-Maltsiniotis and Batanin-Leinster \cite{benjamin_globular_2024,ara_infty_2010,bourke_iterated_2020}, and thus all our constructions hold for any cells with degenerate boundary in an $\omega$-category.

\subsection{The Type Theory \texorpdfstring{$\catt$}{CaTT}}
\noindent
The type theory $\catt$ has $4$ kinds of syntactic entities: contexts, types, terms, and substitutions. The introduction rules for this untyped syntax is presented in Figure~\ref{fig:untypedsyntax}. We assume a countable alphabet $x,y,z,f,g,h,a,b\dots$ of variable symbols~$\mathcal V$. For a context $\Gamma$, $\Var(\Gamma)$ is the set of variables in the context.

\begin{figure}
    \centering
    \begin{mathpar}
    \inferrule{ }{\varnothing : \Ctx} \and \inferrule{\Gamma : \Ctx \\ A : \Ty}{(\Gamma, x : A) : \Ctx}(x \notin \Var(\Gamma)) \\
    \inferrule{ }{\obj : \Ty}\and \inferrule{A : \Ty \\ u : \Tm \\ v : \Tm}{u \to_A v : \Ty} \\
    \inferrule{ }{x : \Tm}
    \and \inferrule{\Gamma : \Ctx \\ A : \Ty \\ \sigma : \Sub}{\coh(\Gamma : A)[\sigma] : \Tm} \\
    \inferrule{ }{\langle\rangle : \Sub}\and \inferrule{\sigma : \Sub \\ t : \Tm}{\langle \sigma , x \mapsto t\rangle : \Sub}
\end{mathpar}
    \caption{The untyped syntax of \catt.}
    \label{fig:untypedsyntax}
\end{figure}

 \begin{figure}
   \centering
   \begin{align*}
     x\app{\langle\rangle} &:= x
     & x\app{\langle \sigma, y \mapsto t\rangle} &:= x\app{\sigma} \text{ if $x\neq y$}  \\
     x\app{\langle \sigma, x \mapsto t\rangle} &:= t
     &  \coh(\Gamma : A)[\tau]\app{\sigma} &:= \coh(\Gamma : A)[\tau \cir \sigma]\\
     \obj\app{\sigma} &:= \s
     & (u \to_A v)\app{\sigma} &:= u\app{\sigma} \to_{A\app{\sigma}} v\app{\sigma}\\
     \langle\rangle \cir \sigma &:= \sub{}
     & \langle\tau, x \mapsto t\rangle \cir \sigma &:= \langle \tau\cir\sigma, x\mapsto t\app{\sigma}\rangle
   \end{align*}
   \caption{\centering Definition of the action of substitutions}
   \label{def:substitution}
 \end{figure}

 We give the definition of substitution application and composition in
 Figure~\ref{def:substitution}. We define also the \emph{support} $\supp_\Gamma(t)$ (resp.
 $\supp_\Gamma (A)$, $\supp_{\Gamma} (\sigma)$) of a term $t$, (resp. a type
 $A$, or substitution $\sigma$) relative to a context $\Gamma$, to be the union
 ranging over $(x : B) \in \Gamma$ such that \(x\) appears in $t$ (resp. $A$,
 $\sigma$) of the sets $\{x\}\cup \, \supp_\Gamma(B)$. When there is no
 ambiguity, we omit the index $\Gamma$. We say a term $t$ (resp. a type $A$) is
 \emph{full} in a context $\Gamma$ if $\supp_\Gamma(t) = \Var (\Gamma)$ (resp.
 $\supp_\Gamma(A) = \Var(\Gamma)$).


\begin{figure}
    \begin{subfigure}[b]{.32\textwidth}
    $\left(\begin{array}{@{}c@{}}
     x : \obj,\ y : \obj,\\ f : x \to_\obj y,\\  z : \obj,\\  g : y \to_\obj z,\\  h : y \to_\obj z,\\ a : g \to_{(y \to_\obj z)} h,\\  k : y \to_\obj z,\\ b : h \to_{(y \to_\obj z)} k
    \end{array}\right)$
    \caption{Pasting context}
    \end{subfigure}
    \begin{subfigure}[b]{.32\textwidth}
\[         \begin{tikzpicture}[every node/.style={scale=1},baseline=(x.base), inner sep=2pt]
          \node [black,label={left:$x$},label={above:$y$},label={right:$z$},on grid](x) {$\bdot$};
          \node [black,label={above:$f$},above left =0.4 and 0.25 of x, on grid] (x0) {$\bdot$};
          \node [black, above right = 0.4 and 0.25 of x, on grid] (x1) {$\bdot$};
          \node[label={above:$a$}, above left=0.4 and 0.25 of x1] (x10) {$\bdot$};
          \node[label={above:$b$}, above right=0.4 and 0.25 of x1] (x11) {$\bdot$};
          \node[below=1 of x1] {};
          \node[left=0.4 of x0] {};
          \draw[black] (x.center) to (x0.center);
          \draw[black] (x.center) to (x1.center);
          \draw (x1.center) to (x10.center);
          \draw (x1.center) to (x11.center);
          \node [black,label = {left:$g$}, label={above:$h$},label={right:$k$}, above right = 0.4 and 0.25 of x, on grid] (x1) {$\bdot$};
        \end{tikzpicture}
        \]
    \caption{ Batanin tree}
    \end{subfigure}
    \begin{subfigure}[b]{0.32\textwidth}
        \[\begin{tikzcd}[ampersand replacement=\&]
        {\boundarycolour x} \& {\boundarycolour y} \& {\boundarycolour z} \\[10pt]
        \phantom{\bullet}
        \arrow[black,"{\boundarycolour f}", from=1-1, to=1-2]
        \arrow[""{name=0, anchor=center, inner sep=0}, "h"{description}, from=1-2, to=1-3]
        \arrow[black,""{name=1, anchor=center, inner sep=0}, "{\sourcecolour g}", curve={height=-18pt}, from=1-2, to=1-3]
        \arrow[black,""{name=2, anchor=center, inner sep=0}, "{\targetcolour k}"', curve={height=18pt}, from=1-2, to=1-3]
        \arrow["a", shorten <=2pt, shorten >=2pt, Rightarrow, from=1, to=0]
        \arrow["b", shorten <=2pt, shorten >=2pt, Rightarrow, from=0, to=2]
\end{tikzcd}\]

    \caption{Pasting of discs}
    \end{subfigure}
    \caption{\centering The correspondence between pasting contexts, Batanin trees and pastings of discs.}
    \label{batanin tree}
\end{figure}

A special role in $\catt$ is played by \emph{pasting contexts}, a class of contexts that correspond to certain pastings of discs. They may be  characterised formally via a bijection with finite rooted planar trees (\emph{Batanin trees}). Given such a tree, we assign to each node $N$ a sequence of labels $v_1, \dots, v_{n+1}$ taken from our set of variables $\mathcal V$, where $n$ is the number of children of $N$. To each node $N$ we then recursively assign a type $\Ty(N)$ for its variables: the type of the root is $\obj$ and the type of the $i$\textsuperscript{th} child of a node $N$ labelled $v_1,\dots,v_n$ is $(v_i\!\to_{\Ty(N)}\!v_{i+1})$. To produce the context associated with such a labelled tree, it remains to choose an order on the variables: we fix an arbitrary ordering for each tree, such that every positive-dimensional variable is listed after its source and target. We illustrate this correspondence in Figure~\ref{batanin tree}; formal treatments are available in the literature~\cite[Theorem~5.5]{benjamin_catt_2024}.

We write $\Gamma \vdash_\ps$ if $\Gamma$ is a pasting context. A
nontrivial\footnote{The trivial pasting context is the point context
  $\point = (x : \obj)$.} pasting context $\Gamma$ has well-defined
\emph{source} and \emph{target }pasting contexts $\partial^- \Gamma$ and
$\partial^+ \Gamma$, corresponding to the trees obtained by removing the leaves
of maximal height, and keeping either the leftmost or rightmost labels for the
new leaves, respectively. Denoting \(\Gamma\) the pasting context defined in
Figure~\ref{batanin tree}, its source of and targets are given by:
\begin{align*}
  \partial^{-}\Gamma &=(x : \obj,\ y : \obj,\ f : x \to_\obj y,\ z : \obj,\  g : y
                       \to_\obj z)\\
  \partial^{+}\Gamma &=(x : \obj,\ y : \obj,\ f : x \to_\obj y,\ z : \obj,\  k : y \to_\obj z)
\end{align*}
The untyped syntax of \(\catt\) is subject to $4$ judgements, the
derivation rules for which are presented in Figure~\ref{cattrules}:
\begin{itemize}
    \item $\Gamma \vdash$ the judgement that $\Gamma$ is a valid context
    \item $\Gamma \vdash A$ the judgement that $A$ is a valid type in $\Gamma$
    \item $\Gamma \vdash t : A$ the judgement that $t$ is a term of type $A$ in $\Gamma$
    \item $\Gamma \vdash \sigma : \Delta$ the judgement that $\sigma$ is a substitution $\Gamma \to \Delta$
\end{itemize}
\begin{figure}

\begin{mathpar}
    \inferrule{ }{\varnothing \vdash}
    \and
    \inferrule{\Gamma \vdash \\ \Gamma \vdash A}{(\Gamma, x : A)\vdash}\;\text{($x\notin\Var(\Gamma)$)} \\
    \inferrule{ }{\Gamma \vdash \star}\and
    \inferrule{\Gamma\vdash A\\ \Gamma\vdash u : A \\ \Gamma \vdash v : A}{\Gamma \vdash u \to_A v} \\
    \inferrule{\Gamma \vdash \\ (x : A)\in \Gamma}{\Gamma \vdash x : A}\;\;
    \inferrule{\Gamma\vdash_{ps} \\ \Gamma \vdash A \\ \Delta \vdash \sigma : \Gamma}
{\Delta \vdash \coh(\Gamma:A)[\sigma]:A\app{\sigma}}\;\text{($*$)}\\

    \inferrule{ }{\langle\rangle : \varnothing \to \Gamma}\;\;
    \inferrule{\Gamma \vdash \sigma : \Delta \\ \Delta,x:A \vdash \\ \Gamma\vdash t : A\app{\sigma}}{\Gamma \vdash \langle\sigma, x \mapsto t\rangle : (\Delta, x : A)}\;
\end{mathpar}
\caption{Typing rules of $\catt$.}
\label{cattrules}
\end{figure}
Substitution and composition preserve those judgements as expected~\cite[Prop.~3]{benjamin_type_2020}, and together with the identity substitutions $\id_\Gamma$,
contexts of \catt thus form a category.

The $\coh$-introduction rule has a side condition (*) which can be satisfied in
two ways:
\[(*)\quad\begin{cases}
  A = u \to_B v \text{ where }u \text{ is full in }\partial^-\Gamma \text{ and }v \text{ full in }\partial^+\Gamma \\
  A = u \to_B v\text{ where both }u\text{ and }v\text{ are full in }\Gamma
\end{cases}\]
In the first case, \(u\) and \(v\) are ways to compose the source and target of
\(\Gamma\), and the term \(\coh(\Gamma:A)[\id]\) is a composition of the
entire pasting context \(\Gamma\). In the second case, \(\coh(\Gamma:A)[\id]\)
is an equivalence between the operations \(u\) and \(v\).

\begin{definition}
    We say a well-typed term $t$ of the form $\coh(\Gamma : A)[\sigma]$ is a \emph{composite} if its typing derivation uses the first side-condition, and a \emph{coherence} if it uses the second side-condition.
\end{definition}

\begin{definition}
  We define the dimension of types, terms in a context, and contexts as follows:
   \begin{gather*}
    \dim(\obj) := -1  \qquad\qquad
    \dim(u \to_A v) := \dim A + 1  \\
    \dim_\Gamma(t) := \dim A + 1 \qquad \text{for $\Gamma \vdash t : A$}\\
    \dim(\Gamma) := \max\{\dim_\Gamma(x) : x \in \Var(\Gamma)\}
   \end{gather*}
\end{definition}
We write just $\dim(t)$ for $\dim_\Gamma(t)$ and $u \to v$ for $u \to_A v$ where there is no ambiguity. We refer to terms in a context $\Gamma$ as \emph{cells} of $\Gamma$,
and $n$\=/dimensional terms as \emph{$n$-cells}. If $\Gamma \vdash t : u \to_A v$, we write $u = \partial^- t$ and $v = \partial^+ t$. These source and target operations can be iterated, and we write $\partial^-_k t$ and $\partial^+_k t$ for the $k$-dimensional source and target of $t$, respectively.

The rules of $\catt$ allow us to construct familiar categorical operations. We
first define the \emph{sphere} and \emph{disc} contexts:

\begin{definition}[Sphere and Disc contexts]\label{def:disc-ctx}
  We define the contexts $\sphere^{n-1}$ and $\disc^n$ and the type
  $\sphere^{n-1} \vdash \spheretype^{n-1}$ for $n\geq 0$:
  \begin{gather*}
    \begin{aligned}
    \sphere^{-1} &:= \varnothing \qquad&\qquad
    \sphere^{n+1} &:= (\sphere^n, d^{n+1}_- : \spheretype^n, d^{n+1}_+ : \spheretype^n)\\
    \spheretype^{-1} &:= \obj \qquad&\qquad
    \spheretype^{n+1} &:= d^{n+1}_- \to_{\spheretype^n} d^{n+1}_+
    \end{aligned}\\
    \disc^n := (\sphere^{n-1}, d^n : \spheretype^{n-1})
\end{gather*}
\end{definition}

Given a context, its \textit{locally-maximal variables} are those which do not
appear in the type of any other variable. By type inference and unification, a
substitution $\Gamma \vdash \sigma : \Delta$ is fully determined by its action
on locally-maximal variables of $\Delta$. If $t_1, \dots t_n$ are the images of
the locally-maximal variables of $\Delta$ under $\sigma$, we often use the
shorthand $u\app{t_1,\dots,t_n} := u\app{\sigma}$.

\begin{definition}
  Give a well-typed term $a$ of dimension $n$ in context \(\Gamma\), we define
  its \textit{identity} \(\id_{a}\) and \textit{iterated identities}
  \(\id^{k}_{a}\) by induction as follows:
    \begin{align*}
        \id_a &:= \coh(\disc^n : d^n \to d^n)[a] \\
        \id^{k+1}_a &:= \id_{\id^{k}_{a}}
    \end{align*}
\end{definition}

\begin{definition}\label{def:comp}
  Given a pasting context $\Gamma$, we define recursively a term $\comp_\Gamma$,
  called its \emph{composite}, as follows:
\[
\comp_\Gamma := \begin{cases}
    d^n & \Gamma = \disc^n \\
    \coh(\Gamma : \comp_{\partial^- \Gamma} \to \comp_{\partial^+ \Gamma})[\id_\Gamma] & \text{otherwise}
\end{cases}
\]
\end{definition}
\noindent
We will also use the following more familiar notation for composites and whiskering:
$$t_1 \s_k \dots \s_k t_n := \comp_\Gamma\app{t_1,\dots,t_n}$$
in the case where \(\Gamma\) is the pasting context obtained from a sequence of discs,
potentially of different dimensions, by identifying the variable \(d^k_+\) of each
disc with the variable \(d^k_-\) of its successor. For instance, the whiskering
of a $2$-cell $a$, a $1$\=/cell $h$, and a $3$\=/cell $p$ is denoted by
$a \s_0 h \s_0 p := \comp_\Gamma\app{a,h,p}$. The pasting context over which is defined is
illustated below:
\[\begin{tikzcd}[ampersand replacement=\&, column sep = 1.5cm]
        x \& y \& z \& w
        \arrow[""{name=0, anchor=center, inner sep=0}, "f", bend left = 45, from=1-1, to=1-2]
        \arrow[""{name=1, anchor=center, inner sep=0}, "g"', bend right = 45, from=1-1, to=1-2]
        \arrow["h", from=1-2, to=1-3]
        \arrow[""{name=2, anchor=center, inner sep=0}, "k", bend left = 60, from=1-3, to=1-4]
        \arrow[""{name=3, anchor=center, inner sep=0}, "l"', bend right = 60, from=1-3, to=1-4]
        \arrow["a", shorten <=3pt, shorten >=3pt, Rightarrow, from=0, to=1]
        \arrow[""{name=4, anchor=center, inner sep=0}, "b"', bend right = 60, shorten <=4pt, shorten >=4pt, Rightarrow, from=2, to=3]
        \arrow[""{name=5, anchor=center, inner sep=0}, "c", bend left = 60, shorten <=4pt, shorten >=4pt, Rightarrow, from=2, to=3]
        \arrow["p", shorten <=2pt, shorten >=2pt, Rightarrow, scaling nfold=3, from=4, to=5]
\end{tikzcd}\]
We extend the composition operation
\(t \s_k \dots \s_k t\) to the case $\dim(t_i)\leq k$, by adopting the
convention that in this case $t_1, \dots t_n$ are composable only if they are
all equal, in which case $t \s_k \dots \s_k t := t$. With this convention, for
any \(n,k \in \N\), we have
\[
\partial^\pm (\id^n_x \s_k \id^n_x) = \id^{n-1}_x \s_k \id^{n-1}_x
\]
In weak \(\omega\)\=/categories, those compositions are not stricly associative
nor unital. However, the second side condition of the $\coh$-introduction
rule allows us to construct, for example, unitors:
\begin{align*}
   u_x &:= \coh((x : \obj) :  \id_x \to \id_x \s_0 \id_x )[x]\\
   \rho_f &:= \coh((x,y : \obj, f : x \to y) :  f \s_0 \id_y \to f )[f]
\end{align*}

\begin{definition}~\label{def:id-type} Throughout this article, we will use the
  notation \(\point := (x : \obj)\) for the point context with the variable
  named \(x\). In this context, we also define the following terms and type,
  which play a fundamental role in the construction of the cells \(\eh^{n}_{k,l}\):
  \begin{align*}
    (\id^{n}_x)^{\s_{l}} &:= (\id^{n}_x) \s_{l} (\id^{n}_x)
    &
    I_{k}^{n} &:= (\id^{n}_x)^{\s_{k}} \to (\id^{n}_x)^{\s_{k}}
  \end{align*}
\end{definition}

\subsection{Meta-Operations}

\noindent
Various meta\=/operations have been introduced~\cite{benjamin_type_2020,benjamin_hom_2024,benjamin_invertible_2024,benjamin_naturality_2025} for \catt allowing for the
automatic construction of complex terms. We give a concise presentation of some
of these that we will leverage below.

\paragraph{Suspension}
This meta-operation was defined and implemented for $\catt$ by
Benjamin~\cite[Sec.~3.2]{benjamin_type_2020}, and analogous to the suspension
from topology. Suspending a context $\Gamma$ produces another context
$\Sigma \Gamma$ comprised of two new \(0\)\=/dimensional variables \(N\), \(S\),
as well as all variables of \(\Gamma\). A variable \(x\) of type \(A\) in
context \(\Gamma\) has type \(\Sigma A\), obtained by formally replacing the
base type \(\obj\) with the type \(N\to S\), in context \(\Sigma\Gamma\). This
increases by \(1\) the dimension of the variables, as illustrated in
Figure~\ref{suspensionexample}.

\begin{figure}
    \centering
    \begin{subfigure}{0.49\textwidth}
\[\begin{tikzcd}[ampersand replacement=\&,row sep = 1.1cm, column sep = 1.3 cm]
        x \& y \& z \\
        z \& y \& x
        \arrow[""{name=0, anchor=center, inner sep=0}, "f", bend left = 30, from=1-1, to=1-2]
        \arrow[""{name=1, anchor=center, inner sep=0}, "g"', bend right = 30, from=1-1, to=1-2]
        \arrow["h"', from=1-2, to=1-3]
        \arrow[""{name=2, anchor=center, inner sep=0, pos=0.51}, "{f}", bend left = 30, from=2-2, to=2-3]
        \arrow[""{name=3, anchor=center, inner sep=0}, "{g}"', bend right = 30, from=2-2, to=2-3]
        \arrow["{h}", from=2-1, to=2-2]
        \arrow["a", shorten <=3pt, shorten >=3pt, Rightarrow, from=0, to=1]
        \arrow["{a}", shorten <=3pt, shorten >=3pt, Rightarrow, from=2, to=3]
\end{tikzcd}\]
    \end{subfigure}
    \begin{subfigure}{0.49\textwidth}
\[\begin{tikzcd}[ampersand replacement=\&,row sep = 1cm, column sep = 1.2 cm]
        N \\
        \\
        S
        \arrow[""{name=0, anchor=center, inner sep=0}, "{x}"', bend right = 90, from=1-1, to=3-1]
        \arrow[""{name=1, anchor=center, inner sep=0, pos=0.49}, "{z}", bend left = 90, from=1-1, to=3-1]
        \arrow[""{name=2, anchor=center, inner sep=0}, "{y}"{description}, from=1-1, to=3-1]
        \arrow[""{name=3, anchor=center, inner sep=0, pos=0.53}, "{f}", curve={height = -20pt}, shorten <=8pt, shorten >=8pt, Rightarrow, from=0, to=2]
        \arrow[""{name=4, anchor=center, inner sep=0, pos=0.49}, "{g}"', curve={height = 20pt}, shorten <=8pt, shorten >=8pt, Rightarrow, from=0, to=2]
        \arrow["{h}", shorten <=6pt, shorten >=6pt, Rightarrow, from=2, to=1]
        \arrow["{a}"{description}, shorten <=3pt, shorten >=3pt, Rightarrow, scaling nfold=3, from=3, to=4]
\end{tikzcd}\]
    \end{subfigure}
    \caption{\centering A context (top left), its suspension (right), and its $\{1\}$-opposite (bottom left).}
    \label{suspensionexample}
\end{figure}

\begin{definition}\label{suspdefinition}
  The suspension meta-operation is defined on the syntax of \catt as follows:
  \begin{mathpar}
    \Sigma \varnothing := (N : \obj, S : \obj)
    \and \Sigma(\Gamma, x : A) := (\Sigma\Gamma, x : \Sigma A) \\
    \Sigma \obj := N \to_\obj S
    \and \Sigma (u \to_A v) := \Sigma u \to_{\Sigma A} \Sigma v \\
    \Sigma x := x
    \and \Sigma (\coh(\Gamma : A)[\sigma]) := \coh(\Sigma \Gamma : \Sigma A)[\Sigma \sigma] \\
    \Sigma \langle\rangle := \langle N \mapsto N, S \mapsto S\rangle
    \and \Sigma\langle \sigma, x \mapsto t\rangle :=\langle\Sigma\sigma, x \mapsto \Sigma t \rangle
\end{mathpar}
\end{definition}



\paragraph{Opposites}
Opposites for weak\=/categories have been studied by Benjamin and
Markakis~\cite{benjamin_hom_2024}. Whereas a $1$\=/category has a
single opposite, $\omega$\=/categories have opposites for
each subset $M \subseteq \mathbb{N}_{>0}$, corresponding to flipping the
direction of cells of dimension $n\in M$.
\begin{definition}\label{opdefinition}
  For $M \subseteq \mathbb{N}_{>0}$, the opposite meta-operation \(\op M\) is
  defined on the syntax of \catt as follows:
  \begin{mathpar}
    (\varnothing)^{\op M} := \varnothing \and
    (\Gamma, x : A)^{\op M} := (\Gamma^{\op M}, x : A^{\op M}) \\
    (\obj)^{\op M} := \obj \and x^{\op M} := x \and \langle\rangle^{\op M} := \langle\rangle \\
    (u \to_A v)^{\op M} := \begin{cases}
      u^{\op M} \to_{A^{\op M}} v^{\op M} & \dim u + 1 \notin M \\
      v^{\op M} \to_{A^{\op M}} u^{\op M} & \dim u + 1 \in M \\
    \end{cases} \\
    (\coh(\Gamma : A)[\sigma])^{\op M} := \coh(\Gamma' : A^{\op M}\app{\gamma})[\gamma^{-1}\cir\sigma^{\op M}]\\
    \langle \sigma, x \mapsto t\rangle^{\op M} := \langle \sigma^{\op M}, x^{\op M}\mapsto t^{\op M}\rangle
  \end{mathpar}
  Where $\Gamma'$ is uniquely determined as the pasting context isomorphic to
  $\Gamma^{\op M}$ under a unique isomorphism
  $\gamma : \Gamma^{\op M} \to \Gamma'$ which reorders the variables. When
  $M = \{1\}$, we write $(-)^{\op}$ for $(-)^{\op M}$. This construction is
  illustrated in Figure~\ref{suspensionexample}.
\end{definition}




\paragraph{Chosen inverses}
An \(n\)\=/cell \(f : x \to y\) in an \(\omega\)\=/ is coinductively defined to
be an \emph{equivalence}~\cite{cheng_category_2007} if there is an $n$-cell
$g : y \to x$, together with two equivalences
\begin{align*}
 \varepsilon &: f \s_{n-1} g \to \id_x & \eta &: \id_y \to g \s_{n-1} f
\end{align*}
When this is the case, we say that \(g\) is an \emph{inverse} of \(f\).
Benjamin and Markakis~\cite{benjamin_invertible_2024} have shown that in $\catt$, all coherences
are equivalences, and all composites $t = \coh(\Gamma : A)[\sigma]$ where
$\sigma$ maps all maximal-dimension variables to equivalences are equivalences.
For such equivalences $t : u \to v$, the authors construct a chosen of inverse,
denoted $t^{-1}$, and cancellators $\varepsilon$ and $\eta$. They also prove the
following result

\begin{lemma}\label{diminverse}
Every term \(\Gamma\vdash t : A\) with $\dim(t)>\dim(\Gamma)$ is an equivalence.
\end{lemma}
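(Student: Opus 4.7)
The plan is to proceed by strong induction on the syntactic structure of $t$. The hypothesis $\dim(t) > \dim(\Gamma)$ immediately rules out the possibility that $t$ is a variable, since every variable $x$ of $\Gamma$ satisfies $\dim(x) \leq \dim(\Gamma)$. Hence $t$ must be of the form $\coh(\Delta : A)[\sigma]$, and the argument splits on which of the two side conditions was used to derive its typing.

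If $t$ is a coherence (second side condition), then by the result of Benjamin and Markakis cited just above, every coherence is an equivalence, and we are done in this case with no need for the induction hypothesis.

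If $t$ is a composite $\coh(\Delta : u \to_B v)[\sigma]$ (first side condition), we aim to apply the stronger part of the Benjamin--Markakis result, which states that such a composite is an equivalence whenever $\sigma$ sends every variable of $\Delta$ of maximal dimension to an equivalence. The key step is a dimension computation: since $u$ is full in $\partial^-\Delta$, we have $\dim(u) = \dim(\partial^-\Delta) = \dim(\Delta) - 1$, hence $\dim(t) = \dim(B) + 1 = \dim(\Delta)$. Thus any variable $x \in \Var(\Delta)$ with $\dim(x) = \dim(\Delta)$ has an image $\sigma(x)$ in $\Gamma$ with $\dim(\sigma(x)) = \dim(x) = \dim(t) > \dim(\Gamma)$. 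Because $\sigma(x)$ is structurally a strict subexpression of $t$, the induction hypothesis applies and yields that $\sigma(x)$ is an equivalence. Invoking the Benjamin--Markakis criterion then concludes the composite case.

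I expect the main obstacle to be the dimension bookkeeping in the composite case, in particular verifying cleanly that $\dim(\Delta) = \dim(t)$ (which ultimately rests on fullness of $u$ in $\partial^-\Delta$ together with the fact that every non-point pasting context has top-dimensional variables). Once this identification is in place, the rest of the argument is a short propagation of the dimension hypothesis through the substitution $\sigma$, and the result follows immediately from the previously established facts about coherences and composites of equivalences.
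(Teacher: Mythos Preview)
The paper does not give a proof of this lemma at all; it simply attributes the result to Benjamin and Markakis~\cite{benjamin_invertible_2024}. Your structural induction is correct and is the expected argument (and almost certainly the one in the cited reference): rule out variables by the dimension hypothesis, handle coherences directly, and for composites propagate the hypothesis through $\sigma$ using the Benjamin--Markakis criterion.

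One small refinement to your dimension bookkeeping: the equality $\dim(t)=\dim(\Delta)$ in the composite case does not follow from fullness of $u$ in $\partial^-\Delta$ alone. Fullness of $u$ only gives $\dim(u)\ge\dim(\Delta)-1$; indeed $u=\id_f$ in $\Delta=(x,y,f,g,a)$ has $\supp(u)=\Var(\partial^-\Delta)$ yet $\dim(u)=2$. What forces $\dim(u)\le\dim(\Delta)-1$ is the combination of \emph{both} fullness conditions together with $u,v:B$ sharing a type: then $\supp(B)\subseteq\Var(\partial^-\Delta)\cap\Var(\partial^+\Delta)$, and if $\dim(u)\ge\dim(\Delta)$ the $(\dim(\Delta)-1)$-boundary of $u$, which lies in $B$, would need support equal to all of $\Var(\partial^-\Delta)$ (by the standard support-of-boundary lemma), contradicting the strict inclusion. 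This is a well-known fact about composites in \catt{} and can equally just be cited, but your sketch slightly undersells what is needed.
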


We extend the notion of equivalence to that of congruence between terms of
\(\catt\). This notion is more generic insofar that it allows for two cells with
different types to be congruent. The cells \(\eh^{n}_{k,l}\) that we will
construct in this paper are congruences.

\begin{definition}\label{def:congruence}
  The \emph{congruence} is the smallest equivalence relation such that an
  $n$\=/cell is congruent to its composite in any dimension with a coherence,
  and equivalent cells are congruent.
\end{definition}

\paragraph{Functorialisation and Naturality}\label{sec:functorialisation}
Composites in \(\omega\)\=/categories are functorial with respect to their
arguments, while coherences are natural. This is made precise by the
\emph{functorialisation}~\cite[§3.4]{benjamin_type_2020} and the naturality
meta-operations~~\cite{benjamin_naturality_2025}. Both operations can be seen as
the \emph{depth-0} and \emph{depth-1} cases of the same inductive scheme
described below. Here, the \emph{depth} is a parameter defined for contexts
$\Gamma$, types \(\Gamma \vdash A\), terms \(\Gamma \vdash t : A\) and
substitutions $\Gamma \vdash \sigma : \Delta$, and for a set of variables
\(X \subseteq \Var \Gamma\) by
\begin{align*}
  \depth_X t &= \max \{\dim t - \dim x : x\in \supp(t)\cap X\} \\
  \depth_X A &= \max \{\dim A - \dim x : x\in \supp(A)\cap X\}
\\
  \depth_X \sigma &= \max\{ \depth_X x[\sigma] : x\in \Var\Delta\}
\\
  \depth_X \Gamma &= \depth_X(\id_\Gamma)
\end{align*}
where \(\max\varnothing = -1\). The scheme further requires that the set \(X\) is
\emph{up-closed}, meaning that if a variable \(x\in X\) appears in the support
of some variable \(y\in \Var(\Gamma)\), then also \(y\in X\). To present the
definition, we introduce the preimage \(X_\sigma\) of a set of variables
\(X\) under a substitution \(\Gamma\vdash \sigma : \Delta\) as follows:
\[X_\sigma = \{ y \in \Var(\Delta) : \supp(y[\sigma])\cap X \neq \varnothing\} \]

The construction proceeds recursively on the derivation tree to produce for
every context \(\Gamma\vdash\) and every up-closed \({X\subseteq \Var(\Gamma)}\)
such that \(\depth_X\Gamma \le 1\), a new context \(\Gamma\uparrow X\) together
with substitutions:
\[
  \Gamma\uparrow X \vdash \inc^\pm : \Gamma
\]
Moreover, it produces for every term \(\Gamma \vdash t : A\) such that \(0\le \depth_Xt \le 1\),
a new term:
\[
  \Gamma\uparrow X \vdash t\uparrow X : A \uparrow^t X
\]
and for every substitution \(\Gamma \vdash \sigma : \Delta\) such that \(\depth_X\sigma \le 1\),
a new substitution:
\[
  \Gamma\uparrow X\vdash \sigma\uparrow X : \Delta\uparrow X_\sigma
\]
When \(\Gamma\vdashps\) is a pasting context, it also produces for full
types \(\Gamma\vdash A\) such that \(0\le \depth_X(\coh(\Gamma:A)[\id])\le 1\), a
term:
\[
  \Gamma\uparrow X \vdash \coh(\Gamma:A)\uparrow X : A\uparrow^{\coh(\Gamma : A)[\id]} X
\]

\paragraph{- Contexts} This proceedure duplicates the variables in \(X\) and adds
a connecting variable relating the two copies. More formally, it is given by:
\begin{gather*}
  \varnothing \uparrow \varnothing
  := \varnothing \\
  \intertext{If \(x\notin X\), we define:}
  (\Gamma, x : A)\uparrow X
   := (\Gamma \uparrow X, x : A)\\
  \intertext{If \(x\in X\), we let \(X' = X\setminus\{x\}\) and define:}
  (\Gamma, x : A)\uparrow X
  :=  (\Gamma \uparrow X', x^- : A, x^+ : A,\fun{x} : A\uparrow^x X)
\end{gather*}
The inclusion substitutions \(\inc^\pm\) are determined by:
  \[y\app{\inc^\pm} = \begin{cases}
    y & \text{if } y \notin X \\
    y^\pm &\text{if }  y\in X
\end{cases}\]

\paragraph{- Types} The type \(A\uparrow^t X\) relates the terms \(t\app{\inc^-}\)
and \(t\app{\inc^+}\). It it an arrow type of the form \(L_{A,t,X} \to R_{A,t,X}\),
where if \(A = \obj\) we define $L_{A,t,X}$ as \(t\app{\inj^-}\) and $R_{A,t,X}$ as \(t\app{\inj^+}\),
and when \(A = u\to v\) they are given by:
  \begin{align*}
    L_{A,t,X} &:=\begin{cases}
      t\app{\inj^- } \s_{n-1} (v \uparrow X)
      & \text{if } \supp(v) \cap X \neq \varnothing\\
      t\app{\inj^-} &\text{otherwise}
    \end{cases} \\
    R_{A,t,X} &:=\begin{cases}
      (u \uparrow  X)\s_{n-1} t\app{\inj^+}
      & \text{if }\supp(u) \cap X \neq \varnothing \\
      t\app{\inj^+} & \text{otherwise}
    \end{cases}
  \end{align*}

  \paragraph{- Terms} The term \(t\uparrow X\) is defined recursively by:
  \begin{align*}
    x \uparrow X &:= \fun{x} \\
    \coh(\Delta: A)[\sigma] \uparrow X &:= (\coh(\Delta: A)\uparrow X_\sigma)\app{\sigma \uparrow X}
  \end{align*}
  \paragraph{- Substitutions} For $x \notin X_\sigma$, \(\sigma\uparrow X\) is given by:
     \[ x\app{\sigma\uparrow X} := x\app{\sigma} \]
     and for \(x\in X_\sigma\) by:
      \begin{align*}
        x^\pm\app{\sigma \uparrow X} &:= x\app{\sigma \cir \inj^\pm} \\
        \fun x\app{\sigma \uparrow X} &:= x\app{\sigma} \uparrow X
      \end{align*}
    \paragraph{- Full types} Let \(t = \coh(\Gamma : A)[\id]\). When \(\depth_X(\Gamma) = 0\),
    then \(\Gamma\uparrow X\) is again a pasting diagram and \(A\uparrow^t X\) is full, allowing
    one to define
    \[
      \coh(\Gamma:A) \uparrow X = \coh(\Gamma\uparrow X : A\uparrow^t X)[\id]
    \]
    When \(\depth_X(\Gamma) = 1\), then \(\Gamma\uparrow X\) is no longer a pasting diagram, and
    \(t\) is a composite. The term \(\coh(\Gamma:A) \uparrow X\) has been constructed by Benjamin et al.~\cite{benjamin_naturality_2025} in this case.

    \begin{example}
      The functorialisation of the composite \(f\s_0 g\) of two \(1\)\=/cells with respect to \(f\)
      is the whiskering $\comp_\Gamma\app{\fun{f},g}$ where $\Gamma$ is the following pasting context:
      \[\begin{tikzcd}[ampersand replacement=\&]
        x
        \ar[r, bend left = 35, "f", ""{below, name = A}]
        \ar[r, bend right = 35, "g"', ""{name = B}]
        \ar[from = A, to = B, Rightarrow, "a"]
        \& y\ar[r, "h"]
        \& z
      \end{tikzcd}\]
    \end{example}

    \begin{example}
      Consider the right unitor $\rho_{f} : f \s_0 \id_y \to f$ in the context $\Gamma_f = (x, y : \obj, f : x \to_\obj y)$. Letting $X = \{f\}$, we get a term \(\rho_f \uparrow X\) filling the following square:
      \begin{equation*}\label{rhonaturality}
        \begin{tikzcd}[ampersand replacement=\&]
          {f^- \s_0 \id_y} \& {f^-} \\
          {f^+ \s_0 \id_y} \& {f^+}
          \arrow["{\rho_{f^-}}", from=1-1, to=1-2]
          \arrow["{\fun{f} \s_0 \id_y}"', from=1-1, to=2-1]
          \arrow["{\rho_{\fun{f}}}"{description}, Rightarrow, from=1-2, to=2-1]
          \arrow["{\fun{f}}", from=1-2, to=2-2]
          \arrow["{\rho_{f^+}}"', from=2-1, to=2-2]
        \end{tikzcd}
      \end{equation*}
    \end{example}

\begin{example}\label{hexcontext}
  Consider the context:
  \[
    \mathbf{3} := (x, y, z, w : \s, f : x \to y, g : y \to z, h : z \to w)
  \]
  Letting $X = \{f,y,g,z,h\}$ the context $\mathbf{3} \uparrow X$ is given by:
  \begin{equation*}
    \begin{tikzcd}[ampersand replacement=\&]
        \& {y^-} \& {z^-} \\
        x \&\&\& w \\
        \& {y^+} \& {z^+}
        \arrow["{g^-}", from=1-2, to=1-3]
        \arrow["{\fun{y}}"{description}, from=1-2, to=3-2]
        \arrow[""{name=0, anchor=center, inner sep=0}, "{h^-}", from=1-3, to=2-4]
        \arrow["{\fun{g}}"{description}, Rightarrow, shorten >=6pt, shorten <=6pt, from=1-3, to=3-2]
        \arrow["{\fun{z}}"{description}, from=1-3, to=3-3]
        \arrow["{f^-}", from=2-1, to=1-2]
        \arrow[""{name=1, anchor=center, inner sep=0}, "{f^+}"', from=2-1, to=3-2]
        \arrow["{g^+}"', from=3-2, to=3-3]
        \arrow["{h^+}"', from=3-3, to=2-4]
        \arrow["{\fun{f}}"{description}, shorten >=6pt, shorten <=6pt, Rightarrow, from=1-2, to=1]
        \arrow["{\fun{h}}"{description}, shorten <=6pt, shorten >=6pt, Rightarrow, from=0, to=3-3]
    \end{tikzcd}
  \end{equation*}
  Consider now the term $t = f \s_0 g \s_0 h$ over $\mathbf{3}$. Its naturality with
  respect to \(X\) is a term over \(\mathbf{3}\uparrow X\) of type:
  \begin{align*}
    f^- \s_0 g^- \s_0 h^- \to f^+ \s_0 g^+ \s_0 h^+
  \end{align*}
  Given $2$-cells $a,b,c$ whose boundaries match as in the context \(\mathbf{3}\uparrow X\),
  we define their \emph{hexagonal composite}:
  \[
    \hexcomp\app{a,b,c} := (t \uparrow X)\app{a,b,c}
  \]
\end{example}
Let \(\sigma\) be a substitution substitution whose target is an iterated
suspension of the context of the hexagonal composition:
\begin{align*}
  \sigma :\ & \Gamma \to \Sigma^k (\mathbf{3} \uparrow X)
\end{align*}
Denoting \(a\), \(b\) and \(c\) the respective images of \(\fun f\),
\(\fun g\) and \(\fun h\) under the action of \(\sigma\), we use suspension
implicitly and write:
\[
  \hexcomp\app{a,b,c} := (\Sigma^k (t \uparrow X))\app{\sigma}
\]
We will use the hexagonal composition and its suspensions in for construction
of the repadding in Sec.~\ref{padding}.

\section{Padding and Repadding}\label{padding}
\noindent
This section is dedicated to the presentation of our theory of padding, which lies at the
heart of our method to construct congruences. Our technique is inspired by the
padding constructions in Fujii et al.~\cite{fujii_equivalences_2024} and Finster
et al.~\cite{finster_type_2022}, but takes a more axiomatic approach, describing
the general shape of such paddings.

\subsection{Padding}\label{subsec:padding}
\noindent
Our method for padding cells involves recursively adjusting their boundaries as
necessary, starting with the lowest dimension where they differ, proceeding
until the cell has the desired type. To capture this dimensionwise recursive
structure, we introduce a notion of filtration.

\begin{definition}
  A \emph{filtration} $\mathbf{\Gamma} = (\Gamma^i,v^i,\sigma^{i})_{i=m}^n$ of
  \emph{height} \(m\) constitutes a sequence of contexts \(\Gamma^{i}\) of dimension
  \(i\), together with a chosen variable \(v^{i}\) in context \(\Gamma^{i}\) and
  a sequence of substitutions \(\sigma^{i}\) for \(m < i \leq n\) satisfying:
  \begin{align*}
    \Gamma^{i} &\vdash \sigma^{i} : (\Gamma^{i-1} \uparrow v^{i-1}) &
    \fun{v^{i}}&\app{\sigma^{i+1}} = v^{i+1}
  \end{align*}
  A family of types $\mathbf{A} = (A^{i})_{i=m}^{n}$ is \emph{adapted} to the
  filtration \(\mathbf{\Gamma}\) when \(\Gamma^{m} \vdash v^m : A^m\), and for
  all \(i \in \{m+1,\ldots,n\}\), there exist terms \(s^{i},t^{i}\) satisfying:
  \begin{gather*}
    \begin{aligned}
      \Gamma&\vdash s^{i}:A^{i}\app{\sigma^{i+1}}
    \quad&\quad \Gamma&\vdash t^{i}:A\app{\sigma^{i+1}}
    \end{aligned}\\
    A^{i+1}=s^i\to_{A^i\app{\sigma^{i+1}}}t^i
  \end{gather*}
  Finally a set of of \emph{padding data} \(\mathbf{p} = (p^i,q^i)_{i=m}^{n-1}\)
  for the type family \(\mathbf{A}\) adapted to the filtration
  \(\mathbf{\Gamma}\) is defined mutually inductively together with its
  \emph{associated padding} \(\padded_{\mathbf{p}}\). Padding data consists in a
  family of terms \(p^{i}\) and \(q^{i}\) satisfying:
  \begin{gather*}
    \Gamma^{i+1} \vdash p^{i} : s^{i} \to \padded^{i}_\mathbf{p}\app{\inj^- \cir \sigma^{i+1}} \\
    \Gamma^{i+1} \vdash q^{i} : \padded^{i}_\mathbf{p}\app{\inj^+ \cir
    \sigma^{i+1}} \to t^{i} \\
    v^{i-1}\notin\supp(p^i)\cup\supp(q^i)
  \end{gather*}
  Its associated padding is a term
  \(\Gamma^{i}\vdash \padded^{i}_{\mathbb{p}} : A^{i}\) defined for
  \(m\leq i \leq n\) by:
  \begin{align*}
    \padded^m_\mathbf{p} &:= v^m \\
    \padded^{i+1}_\mathbf{p} &:= p^i \s_i (\padded^i_\mathbf{p} \uparrow v^i)\app{\sigma^{i+1}} \s_i q^i \tag{$\dagger$}\label{definingeq}
  \end{align*}
\end{definition}

\begin{figure}
  \begin{align*}
    \begin{array}{cc}
      \begin{tikzcd}[ampersand replacement=\&]
        \bdot
        \ar[r,"{p^{0}}"]
        \& \bdot
        \ar[r,"v^{1}"]
        \& \bdot
        \ar[r,"{q^{0}}"]
        \& \bdot
      \end{tikzcd}
      &
        \begin{tikzcd}[ampersand replacement=\&]
          \bdot
          \ar[r,"{p^{0}\app{\sigma^{2}}}"]
          \ar[rrr, bend left=60,""'{name=A'}, "s^{1}"]
          \ar[rrr, bend right=60, ""{name=D}, "t^{1}"']
          \& \bdot
          \ar[r, bend left=40, ""{name = B}, ""'{name = B'}]
          \ar[r, bend right=40, ""{name = C}, ""'{name=C'}]
          \& \bdot
          \ar[r,"{q^{0}\app{\sigma^{2}}}"]
          \& \bdot
          \ar[from=A', to=B, Rightarrow, "p^{1}"]
          \ar[from=B', to=C, Rightarrow, "v^{2}" ]
          \ar[from=C', to=D, Rightarrow, "q^{1}"]
        \end{tikzcd} \\
 \text{(a) \(1\)-dimensional padding \(\padded^{1}\)}
      & \text{(b) \(2\)-dimensional padding \(\padded^{2}\)}
      \end{array} \\\vspace{10pt}
    \begin{array}{c}
         \begin{tikzcd}[ampersand replacement=\&, column sep = 0.7cm]
        \bdot \& \bdot \& \bdot \& \bdot \& \bdot \& \bdot \& \bdot \& \bdot
        \arrow[""{name=0, anchor=center, inner sep=0},bend left = 40, from=1-1, to=1-2]
        \arrow[""{name=1, anchor=center, inner sep=0}, bend right = 40, from=1-1, to=1-2]
        \arrow["{p^2}", Rightarrow, scaling nfold=3, from=1-2, to=1-3]
        \arrow["{p^0\app{\sigma^3}}", from=1-3, to=1-4]
        \arrow[""{name=2, anchor=center, inner sep=0}, "{s^1\app{\sigma^3}}", bend left = 60, from=1-3, to=1-6]
        \arrow[""{name=3, anchor=center, inner sep=0}, "{t^1\app{\sigma^3}}"', bend right = 60, from=1-3, to=1-6]
        \arrow[""{name=4, anchor=center, inner sep=0}, bend left = 60, from=1-4, to=1-5]
        \arrow[""{name=5, anchor=center, inner sep=0}, bend right = 60, from=1-4, to=1-5]
        \arrow["{q^0\app{\sigma^3}}", from=1-5, to=1-6]
        \arrow["{q^2}", Rightarrow, scaling nfold=3, from=1-6, to=1-7]
        \arrow[""{name=6, anchor=center, inner sep=0}, bend left = 40, from=1-7, to=1-8]
        \arrow[""{name=7, anchor=center, inner sep=0}, bend right = 40, from=1-7, to=1-8]
        \arrow["{s^2}", shorten <=2pt, shorten >=2pt, Rightarrow, from=0, to=1]
        \arrow["{p^1\app{\sigma^3}}", shorten <=3pt, shorten >=3pt, Rightarrow, from=2, to=4]
        \arrow[""{name=8, anchor=center, inner sep=0}, bend right = 60, shorten <=2pt, shorten >=2pt, Rightarrow, from=4, to=5]
        \arrow[""{name=9, anchor=center, inner sep=0}, bend left = 60, shorten <=2pt, shorten >=2pt, Rightarrow, from=4, to=5]
        \arrow["{q^1\app{\sigma^3}}", shorten <=3pt, shorten >=3pt, Rightarrow, from=5, to=3]
        \arrow["{t^2}", shorten <=2pt, shorten >=2pt, Rightarrow, from=6, to=7]
        \arrow["{v^3}", shorten <=2pt, shorten >=2pt, Rightarrow, scaling nfold=3, from=8, to=9]
\end{tikzcd} \\
\text{(c) $3$-dimensional padding $\padded^3$}
    \end{array}
    \end{align*}
    \caption{Paddings of height 0 with simplified notation, e.g. writing $p^0\app{\sigma^3}$ for $p^0\app{\inj^-_{\Gamma^1}\cir \sigma^2 \cir \inj^-_{\Gamma^2}\cir \sigma^3}$.}
    \label{paddingfigure}
\end{figure}

Figure~\ref{paddingfigure} illustrates the shape of the paddings of height \(0\)
and dimension up to \(2\). We now define the \emph{unbiased padding},
illustrated in Figure~\ref{fig:padding-examples}. This is the padding appearing in
the type of the Eckmann-Hilton cells $\eh^n_{k,l}$, as well as in the respective
source and targets of $X_3$ and $X_4$ in Figure~\ref{2dEH}. It transports terms
from type \(I^{n}_{k}\) of Def.~\ref{def:id-type} to type \(I^{n}_{l}\) using
\emph{generalised unbiased unitors}. It will turn out to be self-dual, i.e.
invariant under opposites, a crucial property necessary to define the cells
\(\eh^n_{k,l}\) and assemble into the commutativity cells $\EH^n_{k,l}$.

\begin{definition}[Unbiased unitors and unbiased paddings]\label{def:unbiasedpadding}
  For \(n\geq 2\) and \(k,l < n\), we denote
  \(m = \min \{k,l\} + 1\), and we introduce the filtration
  \(\mathbf{\Gamma}^{n}_{k,l} =
  (\Gamma^{i}_{l},v^{i}_{l},\sigma^{i})_{i=m}^{n}\) where
  \begin{align*}
    \Gamma^{i}_{l} &= (x : \obj, v^i_l : I^{i-1}_{l})
    & \sigma^{i}_{l} &= \sub{x\mapsto x, \fun{v^{i}_{l}}\mapsto v^{i+1}_{l} }
  \end{align*}
  We then define a set of padding data
  \(\mathbf{u}^{n}_{k,l} = (p^{i}_{k,l},q^{i}_{k,l})_{i=m}^{n}\) for the family
  \((I^{i}_{k})_{i=m}^n\) adapted to \(\mathbf{\Gamma}^{n}_{k,l}\) with
  associated padding denoted \(\padded^i_{k,l}\), by:
  \begin{align*}
    p_{k,l}^i
  &:= \coh(\point : (\id^i_x)^{\s_{k}}\to \padded^i_{k,l}\app{(\id^i_x)^{\s_{l}}})[x] \\
  q_{k,l}^i
  &:=\coh(\point : \padded^i_{k,l}\app{(\id^i_x)^{\s_{l}}} \to
    (\id^i_xx)^{\s_{k}})[x] = \inv{(p_{k,l}^i)}
  \end{align*}
  We call the terms \(p^{i}_{k,l}\) of this family the \emph{generalised
    unbiased unitors} and its associated padding \(\padded^{i}_{k,l}\) the
  \emph{unbiased padding}.
\end{definition}

We now aim to prove that the unbiased padding satisfies a self-duality property. To do this, we will need the following lemma:
\begin{lemma}~\label{lemma:unbiased-filtration-op}
  Let $0\leq l$ and \(i > l\). The contexts \(\Gamma^{i}_{l}\) appearing in the
  unbiased filtration satisfy, for any \(M \subseteq \N_{>0}\):
  \[
    (\Gamma^{i}_{l})^{\op M} = \Gamma^{i}_{l}
  \]
\end{lemma}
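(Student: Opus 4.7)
The plan is to induct on $i$. Since $\Gamma^{i}_{l} = (x : \obj, v^{i}_{l} : I^{i-1}_{l})$ and $\obj^{\op M} = \obj$ by Definition~\ref{opdefinition}, the problem reduces to showing that $(I^{i-1}_{l})^{\op M} = I^{i-1}_{l}$.

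The pivotal observation to exploit is that $I^{i-1}_{l} = (\id^{i-1}_{x})^{\s_{l}} \to (\id^{i-1}_{x})^{\s_{l}}$ has source equal to target. For any arrow type of the form $u \to_{A} u$, both branches of the clause for $(u \to_{A} v)^{\op M}$ in Definition~\ref{opdefinition} yield the same result $u^{\op M} \to_{A^{\op M}} u^{\op M}$: the potential swap triggered by $\dim u + 1 \in M$ is inert when source and target coincide. The lemma thus splits into two sub-claims, namely that $((\id^{i-1}_{x})^{\s_{l}})^{\op M} = (\id^{i-1}_{x})^{\s_{l}}$ and that $A^{\op M} = A$, where $A$ is the ambient type of $(\id^{i-1}_{x})^{\s_{l}}$.

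The ambient-type claim falls to a short inner induction: by the identity $\partial^{\pm}((\id^{n}_{x})^{\s_{k}}) = (\id^{n-1}_{x})^{\s_{k}}$ recorded immediately after Definition~\ref{def:comp}, the ambient type of $(\id^{i-1}_{x})^{\s_{l}}$ again has the shape $u' \to_{A'} u'$ with equal source and target, so the same reduction applies recursively until the base type $\obj$ is reached.

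The self-duality of $(\id^{i-1}_{x})^{\s_{l}}$ itself is the main obstacle, and I would handle it by an auxiliary induction establishing both $(\id^{n}_{x})^{\op M} = \id^{n}_{x}$ and $((\id^{n}_{x})^{\s_{k}})^{\op M} = (\id^{n}_{x})^{\s_{k}}$ for all $n,k \geq 0$. Unfolding $\id^{n}_{x}$ as $\coh(\disc^{n-1} : d^{n-1} \to d^{n-1})[\cdots]$ and applying the coherence clause of Definition~\ref{opdefinition}, one verifies that the disc pasting context is invariant under $\op M$ via the identity reindexing (its underlying Batanin tree is linear and so unaffected by any reflection), and its defining type has source equal to target so that the outer arrow swap is again inert. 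For the composite case, the pasting context underlying a binary $\s_{k}$-composition of two $n$-cells is symmetric under the left-right reflection induced by $\op M$, and since both arguments of the substitution are equal to $\id^{n}_{x}$, the canonical reindexing $\gamma$ leaves the composed substitution pointwise unchanged. This is the bookkeeping step where care is needed: one must confirm that the isomorphism $\gamma$ witnessing the reflected Batanin tree acts trivially on the substitution, which is precisely where equality of the two input cells is essential.
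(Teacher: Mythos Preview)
Your approach is essentially the paper's, just unpacked: the paper reduces to $(I^{i-1}_l)^{\op M} = I^{i-1}_l$ and then dispatches it in one line by invoking Lemma~\ref{opcomp}, which already records that $(\id^m_t)^{\op M} = \id^m_{t^{\op M}}$ and that $(t_0 \s_k t_1)^{\op M}$ is either $(t_0)^{\op M}\s_k(t_1)^{\op M}$ or the same with arguments swapped. Since here both arguments are $\id^{i-1}_x$ and $x^{\op M}=x$, the result is immediate. Your auxiliary induction is simply an inline reproof of the relevant cases of that lemma, and the outer ``induct on $i$'' you announce is never actually used.

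One small correction: for the disc case you assert the reindexing $\gamma$ is the identity. It is not; as noted in the paper's proof of Lemma~\ref{opcomp}, $\gamma$ swaps $d^k_-$ and $d^k_+$ for each $k\in M$ with $k<n$. What makes the argument go through is that $\gamma$ fixes the top variable $d^n$ (so the type $d^n\to d^n$ is preserved) and that the substitution sends each pair $d^k_-,d^k_+$ to the \emph{same} term $\id^{k}_x$, so the swap is invisible. You correctly identify this mechanism in the composite case; it applies equally to the disc case.
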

\begin{proof}
It suffices to show the following
\[
(I^{i-1}_l)^{\op M} = I^{i-1}_l:
\]
By Lemma~\ref{opcomp}, we have:
\begin{align*}
    (I^{i-1}_l)^{\op M} &= ((\id^{i-1}_x)^{\s_l} \to (\id^{i-1}_x)^{\s_l})^{\op M}
    \\ &= ((\id^{i-1}_x)^{\s_l})^{\op M} \to ((\id^{i-1}_x)^{\s_l})^{\op M}
    \\ &= \id^{i-1}_x \s_l \id^{i-1}_x \tag*{\qedhere}
\end{align*}
\end{proof}

\begin{proposition}[Self-duality of unbiased padding]\label{lemma:symmetry-main}
  Let $0\leq k,l$ and let $m:=\min\{k,l\}+1$. For any $r$, and for any
  $m\leq i $:
\begin{align*}
  (\padded^i_{k,l})^{\op\{r\}}
  &=\padded^i_{k,l}
    \tag{a}\\
  \intertext{Furthermore, for \(i>m\):}
  (p_{k,l}^{i-1})^{\op\{r\}}
  &= \begin{cases}
    p_{k,l}^{i-1} & r\neq i \\
    q_{k,l}^{i-1} & r = i
  \end{cases} \\
  (q_{k,l}^{i-1})^{\op\{r\}}
  & = \begin{cases}
    q_{k,l}^{i-1} & r\neq i \\
    p_{k,l}^{i-1} & r = i
  \end{cases} \tag{b}
\end{align*}
\end{proposition}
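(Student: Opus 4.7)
My plan is to prove both statements together by strong induction on the index $i$, with (a) at $i-1$ feeding into (b) at $i$, which in turn feeds into (a) at $i$ via the defining recurrence ($\dagger$) for the unbiased padding. The base case is $i = m$, where $\padded^m_{k,l} = v^m_l$ is a variable and hence invariant under every opposite operation by Def.~\ref{opdefinition}.

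For the inductive step establishing (b) at $i$ from (a) at $i-1$, I would unfold the explicit coherence definition of $p^{i-1}_{k,l}$ and apply the formula for the opposite of a coherence in Def.~\ref{opdefinition}. Since $\point^{\op M} = \point$ with trivial reordering $\gamma = \id$, the computation reduces to that of the arrow type $(\id^{i-1}_x)^{\s_k} \to \padded^{i-1}_{k,l}\app{(\id^{i-1}_x)^{\s_l}}$, which has dimension $i$. The arrow-type clause in Def.~\ref{opdefinition} flips the direction of this arrow exactly when $r = i$. The source is self-dual by a direct induction combining self-duality of iterated identities of $x$ with Lemma~\ref{opcomp} (the same strategy used for Lemma~\ref{lemma:unbiased-filtration-op}), while the target is self-dual by the inductive hypothesis (a) at $i-1$ together with commutation of the opposite with substitution application. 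Thus for $r \neq i$ we recover $p^{i-1}_{k,l}$ unchanged, while for $r = i$ the source and target swap, yielding $q^{i-1}_{k,l}$ exactly; the case of $q^{i-1}_{k,l}$ is symmetric.

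For the inductive step establishing (a) at $i$ from (b) at $i$ and (a) at $i-1$, I would use ($\dagger$) to write $\padded^i_{k,l} = p^{i-1}_{k,l} \s_{i-1} (\padded^{i-1}_{k,l} \uparrow v^{i-1}_l)\app{\sigma^i} \s_{i-1} q^{i-1}_{k,l}$ and distribute the opposite across the composition using Lemma~\ref{opcomp}. When $r \neq i$ the order of composition is preserved, the outer factors are unchanged by (b), and the middle factor is unchanged by (a) at $i-1$ together with commutation of the opposite with functorialisation and substitution. When $r = i$ the composition order reverses and part (b) swaps $p^{i-1}_{k,l}$ with $q^{i-1}_{k,l}$; these two reversals cancel exactly, and the middle factor remains self-dual by the same commutation arguments. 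In both cases we recover $\padded^i_{k,l}$, completing the induction.

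The principal technical obstacles are two auxiliary commutation lemmas needed in passing: that opposites commute with substitution application, $(t\app{\sigma})^{\op M} = t^{\op M}\app{\sigma^{\op M}}$, and that they commute with functorialisation, $(t\uparrow X)^{\op M} = t^{\op M}\uparrow X$, at least when $X$ consists of variables preserved by $\op M$ (as is the case for $X = \{v^{i-1}_l\}$). Both are natural from the categorical point of view but require careful verification against the inductive syntax. Once these are in hand, the combinatorial matching in the case $r = i$, where the order-reversal in composition exactly compensates the $p/q$ swap, is the conceptual heart of the argument and what ultimately makes the unbiased padding self-dual and hence suitable for assembling into the commutativity cells $\EH^n_{k,l}$.
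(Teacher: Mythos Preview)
Your proposal is correct and follows essentially the same approach as the paper's own proof: induction on $i$, establishing (b) at $i$ from (a) at $i-1$ by unfolding the coherence over $\point$, then (a) at $i$ from (b) at $i$ via the defining recurrence ($\dagger$) and the $r=i$ case handled by the cancellation of the composition reversal against the $p/q$ swap. The two auxiliary commutation lemmas you flag as ``technical obstacles'' are precisely Lemmas~\ref{opsub} and~\ref{opfunc} in the paper's appendix, and the paper likewise uses Lemma~\ref{lemma:unbiased-filtration-op} to reduce the functorialisation commutation to the clean form you state (since $\op^\uparrow_{\Gamma^{i-1}_l,v^{i-1}_l,\{r\}}$ is the identity there).
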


\begin{proof}
  We proceed by induction on $i$.

  When $i = m$, then have the equality on the terms
  \(v_l^i = \padded^{i}_{k,l} = (\padded^i_{k,l})^{\op r}\).

  For $i > m$, we first show (b). Recall the definitions:
  \begin{align*}
    \point &:= (x : \obj) \\
    p_{k,l}^{i-1} &:= \coh(\point : (\id^{i-1}_x)^{\s_k}  \to \padded^{i-1}_{k,l}\app{(\id^{i-1}_x)^{\s_l}})[x] \\
    q_{k,l}^{i-1} &:=  \coh(\point : \padded^{i-1}_{k,l}\app{(\id^{i-1}_x)^{\s_l}} \to (\id^{i-1}_x)^{\s_k} )[x]
  \end{align*}
  The context \(\point\) satisfies $\point = \point^{\op\{r\}} = \point'$ and
  $\gamma_\point = \id_{\point}$. We carry out the computation of
  $(p^{i-1}_{k,l})^{\op\{r\}}\app{x}$, The one of
  $(q_{k,l}^{i-1})^{\op\{r\}}\app{x}$ being similar. If $n \neq i$, we have, by
  induction and Lemmas~\ref{opcomp} and \ref{opsub}:
  \begin{align*}
    (&p_{k,l}^{i-1})^{\op\{r\}} \\
     &= \coh(\point : ((\id^{i-1}_x)^{\s_k})^{\op\{r\}} \to (\padded^{i-1}_{k,l}\app{(\id^{i-1}_x)^{\s_l}})^{\op\{r\}})[x] \\
     &= \coh(\point : (\id^{i-1}_{x})^{\s_k} \to (\padded^{i-1}_{k,l})^{\op\{r\}}\app{(\id^{i-1}_{x})^{\s_l}})[x]
    \\
     &= \coh(\point : (\id^{i-1}_{x})^{\s_k} \to \padded^{i-1}_{k,l}\app{(\id^{i-1}_{x})^{\s_l}})[x]\\
     &= p_{k,l}^{i-1}
  \end{align*}
  And similarly, if $r = i$, we have:
  \begin{align*}
    (&p_{k,l}^{i-1})^{\op\{r\}} \\
     &= \coh(\point :
       ((\padded^{i-1}_{k,l}\app{(\id^{i-1}_x)^{\s_l}})^{\op\{i\}} \to (\id^{i-1}_x)^{\s_k})^{\op\{i\}} )[x] \\
     &= \coh(\point :
       (\padded^{i-1}_{k,l})^{\op\{r\}}\app{(\id^{i-1}_{x})^{\s_l}} \to (\id^{i-1}_{x})^{\s_k})[x]
    \\
     &= \coh(\point : \padded^{i-1}_{k,l}\app{(\id^{i-1}_{x})^{\s_l}}\to (\id^{i-1}_{x})^{\s_k})[x]\\
     &= q_{k,l}^{i-1}
  \end{align*}

  We now show (a). We have:
  $$\padded^i_{k,l} = p_{k,l}^{i-1} \s_{i-1} (\padded^{i-1}_{k,l} \uparrow
  v_l^{i-1})\app{\sigma^i} \s_{i-1} q_{k,l}^{i-1}$$ Denote the middle term
  $u := ((\padded^{i-1}_{k,l} \uparrow v_l^{i-1})\app{\sigma^i})$. We remark
  that by Lemma~\ref{lemma:unbiased-filtration-op}, the substitution
  \(\op^{\uparrow}_{\Gamma^{i-1}_{l},v^{i-1}_{l},\{r\}}\) is the identity, and
  \((\sigma^{i})^{\op} = \sigma^{i}\). Then, induction, together with
  Lemmas~\ref{opsub} and~\ref{opfunc}, if we have:
  \begin{align*}
    u^{\op \{r\}}
    &= (\padded^{i-1}_{k,l} \uparrow v_l^{i-1})^{\op\{r\}}\app{\sigma^i} \\
    &= ((\padded^{i-1}_{k,l})^{\op\{r\}} \uparrow v_l^{i-1})\app{\sigma^{i}} \\
    &= (\padded^{i-1}_{k,l}\uparrow v_l^{i-1})\app{\sigma^{i}} \\
    &=u
  \end{align*}
  Using this equation and Lemma~\ref{opcomp} and the inductive hypothesis, for
  $r\neq i$ we have:
  \begin{align*}
    (\padded^i_{k,l})^{\op\{r\}}
    &= (p_{k,l}^{i-1})^{\op\{r\}} \s_{i-1} u^{\op \{r\}} \s_{i-1} (q_{k,l}^{i-1})^{\op\{r\}}\\
    &= p_{k,l}^{i-1} \s_{i-1} u^{\op\{r\}} \s_{i-1} q_{k,l}^{i-1} \\
    &= \padded^i_{k,l}
  \end{align*}
  Similarly, if $r = i$:
    \begin{align*}
      (\padded^i_{k,l})^{\op\{i\}}
      &= (q_{k,l}^{i-1})^{\op\{i\}} \s_{i-1} u^{\op \{i\}} \s_{i-1} (p_{k,l}^{i-1})^{\op\{i\}}\\
      &= p_{k,l}^{i-1} \s_{i-1} u^{\op\{i\}} \s_{i-1} q_{k,l}^{i-1} \\
      &= \padded^i_{k,l} \tag*{\qedhere}
    \end{align*}
  \end{proof}
  
We now introduce the \emph{generalised biased unitors} and their associated
\emph{biased paddings}. These are illustrated in Figure~\ref{fig:padding-examples}
and play a key role in our construction of the cells \(\eh^{n}_{n-1,0}\),
appearing in $X_1$ of Figure~\ref{2dEH}. To shorten the construction, we leverage
a duality argument, allowing us to focus only on right unitors. In fact, we
define two flavours $\rho^n, \tilde \rho^n$ of right unitors and respective
associated padding $\padded^n_\rho, \padded^n_{\tilde \rho}$, the first
appearing in the construction of \(\eh^{n}_{n-1,0}\) and the latter in that of
\(\eh^{n}_{0,n-1}\).

\begin{definition}[Generalised unitors and biased paddings]\label{def:biased-padding}
  We define the filtration
  \(\mathbf{\Gamma}^{n}_{\rho} = (\Gamma^{i}_{\rho},v^{i},\sigma^{i}_{\rho})\) by:
  \begin{align*}
    \Gamma^i_{\rho} &= (\sphere^i, v^i : d^{i-1}_- \s_0 \id^{i-1} (d^0_+) \to d^{i-1}_+ \s_0
                   \id^{i-1} (d^0_+)) \\
    \sigma^{i}_{\rho} &= \sub{d^j_\pm \mapsto d^j_\pm, (v^{i-1})^\pm
                        \mapsto d^{i-1}_\pm, \fun{(v^{i-1})}\mapsto v^{i}}
  \end{align*}
  We then define the padding data $\mathbf{p}^n_\rho=(p_{\rho}^i,q_{\rho}^i)_{i=1}^{n-1}$ for the type family $(d^{i-1}_- \to d^{i-1}_+)_{i-1}^n$ adapted to
  \(\mathbf{\Gamma}^{n}_{\rho}\), whose associated padding we denote
  \(\padded^{n}_{\rho}\), as follows:
  \begin{gather*}
    \rho^i := \coh(\disc^i : \padded^i_{\rho}\app{d^i \s_0 \id^i_{d^0_+}} \to d^i)[\id_{\disc^i}] \\
    \begin{aligned}
      p_{\rho}^i &:= (\rho^i)^{-1}\app{d^i_-}
      \qquad&\qquad q_{\rho}^i &:= \rho^i\app{d^i_+}
    \end{aligned}
  \end{gather*}
  We also define the filtration
  \(\mathbf{\Gamma}_{\tilde{\rho}}^{n} = (\disc^i,d^{i},\sigma^{i})_{i=1}^{n}\)
  where \(\disc^{i}\) is the \(i\)-disc context context of
  Def.~\ref{def:disc-ctx}, and \(\sigma^{i}\) is the isomorphism between
  $(\disc^{i-1} \uparrow d^{i-1})$ and $\disc^i$. Consider the following type
  family adapted to the filtration \(\mathbf{\Gamma}^{n}_{\tilde\rho}\):
  \[
    (d^{i-1}_- \s_0 \id^{i-1}_{d^0_+} \to d^{i-1}_+ \s_0 \id^{i-1}_{d^0_+})_{i=1}^n
  \]
  We define padding data
  $\mathbf{p}^n_{\tilde\rho} = (p_{\tilde\rho}^i,q_{\tilde\rho}^i)_{i=1}^{n-1}$
  for this type family, whose associated padding we call
  \(\padded^{i}_{\tilde\rho}\), as follows:
  \begin{gather*}
    \tilde\rho^i := \coh(\disc^i :  \padded^i_{\tilde\rho}\app{d^i_-} \to d^i \s_0 \id^i_{d^0_+})[\id_{\disc^i}]  \\
    \begin{aligned}
      p_{\tilde\rho}^i &:= (\tilde\rho^i)^{-1}\app{d^i_-} \qquad & \qquad
    q_{\tilde\rho}^i &:= \tilde\rho^i\app{d^i_+}
    \end{aligned}
  \end{gather*}
  We call the coherences \(\rho^{n}, \tilde \rho^n\) \emph{generalised right unitors}. The
  paddings \(\padded^{n}_{\rho}\) and \(\padded^{n}_{\tilde\rho}\) are the
  \emph{right-biased paddings}. Using the duality, we define the
  \emph{generalised left unitors} and
  \emph{left-biased paddings} as follows:
  \begin{align*}
    \lambda^{n} &:= (\rho^{n})^{\op}
    & \tilde\lambda^{n} &:= (\tilde\rho^{n})^{\op} \\
    \padded^{n}_{\lambda} &:= (\padded^{n}_{\rho})^{\op}
    & \padded^{n}_{\tilde\lambda} &:= (\padded^{n}_{\tilde\rho})^{\op}
  \end{align*}
\end{definition}

\begin{figure}
  \centering
  \[
    \begin{array}{cc}
      \begin{tikzcd}[ampersand replacement=\&, row sep=-2]
        \& \bdot \ar[dd, Rightarrow, "v^{2}_{0}"]
        \ar[rd, bend left=10] \\
        \bdot
        \ar[ru, bend left=10]
        \ar[rd, bend right=10]
        \ar[rr, bend left=90, ""{below,name=A}]
        \ar[rr, bend right=90,""{name=B}]
        \&\& \bdot
        \\
        \& \bdot \ar[ru, bend right=10]
        \ar[from=A, to=1-2, Rightarrow, "p^{1}_{1,0}"]
        \ar[to=B, from=3-2, Rightarrow,"q^{1}_{1,0}"]
      \end{tikzcd}
      &
        \begin{tikzcd}[ampersand replacement=\&, row sep=-2]
          \& \bdot \ar[dd, Rightarrow, "v^{2}"]
          \ar[rd, bend left=10] \\
          \bdot
          \ar[ru, bend left=10, "d^{1}_{-}"{sloped}]
          \ar[rd, bend right=10, "d^{1}_{+}"'{sloped}]
          \ar[rr, bend left=90, ""{below,name=A}, "d^{1}_{-}"]
          \ar[rr, bend right=90, ""{name=B}, "d^{1}_{+}"']
          \&\& \bdot
          \\
          \& \bdot \ar[ru, bend right=10]
          \ar[from=A, to=1-2, Rightarrow, "(\rho^1)^{\text{-}1}"]
          \ar[to=B, from=3-2, Rightarrow,"\rho^1"]
        \end{tikzcd}
      \\
     \text{(a) unbiased padding \(\padded^{2}_{1,0}\)}
      & \text{(b) biased padding \(\padded_{\rho}^{2}\)}
      \\
        \begin{tikzcd}[ampersand replacement=\&]
          \& \bdot
          \ar[rd, bend left=30] \\
          \bdot
          \ar[ru, bend left=30]
          \ar[rd, bend right=30]
          \ar[rr, bend left=30,""{below,name=A}]
          \ar[rr, bend right=30,""{name=B}]
          \&\& \bdot
          \\
          \& \bdot \ar[ru, bend right=30]
          \ar[to=A, from=1-2, Rightarrow, "p^{1}_{0,1}"]
          \ar[from=B, to=3-2, Rightarrow,"q^{1}_{0,1}"]
          \ar[from=A, to=B, Rightarrow, "v^{2}_{1}"]
        \end{tikzcd}
      &
        \begin{tikzcd}[ampersand replacement=\&]
          \& \bdot
          \ar[rd, bend left=30] \\
          \bdot
          \ar[ru, bend left=30, "d^{1}_{-}"{sloped}]
          \ar[rd, bend right=30, "d^{1}_{+}"'{sloped}]
          \ar[rr, bend left=30,""{below,name=A}, "d^{1}_{-}"{near start, sloped}]
          \ar[rr, bend right=30,""{name=B}, "d^{1}_{+}"{below, near start, sloped}]
          \&\& \bdot
          \\
          \& \bdot \ar[ru, bend right=30]
          \ar[to=A, from=1-2, Rightarrow, "(\tilde\rho^1)^{\text{-}1}"]
          \ar[from=B, to=3-2, Rightarrow,"\tilde\rho^1"]
          \ar[from=A, to=B, Rightarrow, "d^{2}"]
        \end{tikzcd} \\
      \text{(c) unbiased padding \(\padded^{2}_{0,1}\)}
      & \text{(d) unbiased padding \(\padded^{2}_{\tilde\rho}\)}
    \end{array}
  \]

  \caption{\centering Biased and unbiased paddings in dimension \(2\). The
    unlabelled arrows are identities.}
  \label{fig:padding-examples}
\end{figure}


We now define morphisms of filtrations. Those will allow us to transport paddings
over difference filtrations. Using such morphisms, we can transport the left-biased
and right-biased paddings to the filtration of the unbiased padding, and
subsequently to relate them in Sec.~\ref{subsec:repadding}. This relation is
analogue to the equation \(\rho_{\id} = \lambda_{\id}\) from monoidal categories.

\begin{definition}
  A \emph{morphism of filtrations} $\boldsymbol{\psi}=(\psi^i)_{i=m}^n$ between
  filtrations $(\Delta^i,w^i,\tau^{i})_{i=m}^n$ and
  $(\Gamma^i,v^i,\sigma^{i})_{i=m}^n$ consists of substitutions
  $\psi^i : \Delta^i \to \Gamma^i$ such that $\{w^i\}_{\psi^i} =\{v^i\}$, and
  the following commutes for each $i$:
\begin{equation}\label{paddingmorphism}
\begin{tikzcd}[ampersand replacement=\&]
        {\Delta^{i+1}} \& {\Delta^i \uparrow w^i} \\
        {\Gamma^{i+1}} \& {\Gamma^i \uparrow v^i}
        \arrow["{\tau^{i+1}}", from=1-1, to=1-2]
        \arrow["{\psi^{i+1}}"', from=1-1, to=2-1]
        \arrow["{\psi^i \uparrow w^i}", from=1-2, to=2-2]
        \arrow["{\sigma^{i+1}}"', from=2-1, to=2-2]
\end{tikzcd}\end{equation}
Given a family of types \(\mathbf{A}=(A^i)_{i=m}^n\), and padding data
\({\mathbf{p} = (p^i,q^i)_{i=m}^{n-1}}\) for \(\mathbf{A}\) with associated
padding \(\padded_{\mathbf{p}}\), we denote:
\begin{mathpar}
  \mathbf{A}\app{\boldsymbol{\psi}}
  := (A^i\app{\psi^{i}})_{i=m}^n \and
  \mathbf{p}\app{\boldsymbol{\psi}}
  := (p^i\app{\psi^{i+1}},q^i\app{\psi^{i+1}})_{i=m}^{n-1} \and
  \padded_{\mathbf{p}}\app{\boldsymbol{\psi}} :=
(\padded^{i}_{\mathbf{p}}\app{\psi^{i}})_{i=m}^n
\end{mathpar}
\end{definition}

\begin{lemma}
\label{lemma:padding-morphism}
Given \(\boldsymbol{\psi} : \mathbf{\Delta} \to \mathbf{\Gamma}\) a morphism of
filtrations, if $\mathbf{A}$ is a type family adapted to
$\mathbf{\Gamma}$, then $\mathbf{A}\app{\boldsymbol{\psi}}$ is adapted to
$\mathbf{\Delta}$. If $\mathbf{p}$ is padding data for $\mathbf{A}$ with
associated padding \(\padded_{\mathbf{p}}\), then
$\mathbf{p}\app{\boldsymbol{\psi}}$ is padding data for
$\mathbf{A}\app{\boldsymbol{\psi}}$, with associated padding 
\(\padded_{\mathbf{p}}\app{\boldsymbol{\psi}}\).
\end{lemma}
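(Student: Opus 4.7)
The plan is to establish all three claims simultaneously by induction on $i$ from $m$ to $n$: that $\mathbf{A}\app{\boldsymbol{\psi}}$ is adapted to $\mathbf{\Delta}$, that $\mathbf{p}\app{\boldsymbol{\psi}}$ is padding data for it, and that $\padded_\mathbf{p}\app{\boldsymbol{\psi}}$ is its associated padding. The inductive structure mirrors the layered definition of the filtration and of the padding itself, so carrying all three claims together avoids having to re-establish intermediate typing information.

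For the base case $i = m$, the preimage condition $\{w^m\}_{\psi^m} = \{v^m\}$ forces $v^m\app{\psi^m} = w^m$. Applying $\psi^m$ to $\Gamma^m \vdash v^m : A^m$ then gives $\Delta^m \vdash w^m : A^m\app{\psi^m}$, which is the base clause of adaptedness, and the padding equality $\padded^m_{\mathbf{p}\app{\boldsymbol{\psi}}} = w^m = v^m\app{\psi^m} = \padded^m_\mathbf{p}\app{\psi^m}$ follows directly from the base clause of the definition of $\padded$.

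For the inductive step, I would apply $\psi^{i+1}$ to the decomposition $A^{i+1} = s^i \to_{A^i\app{\sigma^{i+1}}} t^i$, obtaining $A^{i+1}\app{\psi^{i+1}} = s^i\app{\psi^{i+1}} \to_{(A^i\app{\psi^i})\app{\tau^{i+1}}} t^i\app{\psi^{i+1}}$, where the rewriting of the middle type uses the commutative square $\sigma^{i+1}\circ\psi^{i+1} = (\psi^i\uparrow w^i)\circ\tau^{i+1}$ together with the observation that $v^i \notin \supp(A^i)$ (since $A^i$ is the type of $v^i$), so that the action of $\psi^i\uparrow w^i$ on $A^i$ coincides with that of $\psi^i$. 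This exhibits $\mathbf{A}\app{\boldsymbol{\psi}}$ as adapted to $\mathbf{\Delta}$. The typing of $p^i\app{\psi^{i+1}}$ and $q^i\app{\psi^{i+1}}$ is verified by the same rewriting applied to their targets and sources, invoking the inductive hypothesis to identify $\padded^i_\mathbf{p}\app{\psi^i}$ with $\padded^i_{\mathbf{p}\app{\boldsymbol{\psi}}}$.

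To close the induction I would verify the padding equality at level $i+1$. Applying $\psi^{i+1}$ to the defining equation $(\dagger)$ and pushing the substitution through the commutative square yields $\padded^{i+1}_\mathbf{p}\app{\psi^{i+1}} = p^i\app{\psi^{i+1}} \s_i (\padded^i_\mathbf{p} \uparrow v^i)\app{(\psi^i\uparrow w^i)\circ\tau^{i+1}} \s_i q^i\app{\psi^{i+1}}$, which must be matched against $\padded^{i+1}_{\mathbf{p}\app{\boldsymbol{\psi}}} = p^i\app{\psi^{i+1}} \s_i (\padded^i_{\mathbf{p}\app{\boldsymbol{\psi}}} \uparrow w^i)\app{\tau^{i+1}} \s_i q^i\app{\psi^{i+1}}$. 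The main obstacle is reconciling the middle terms, which comes down to the naturality identity $(\padded^i_\mathbf{p} \uparrow v^i)\app{\psi^i \uparrow w^i} = (\padded^i_\mathbf{p}\app{\psi^i}) \uparrow w^i$ --- the compatibility of the functorialisation meta-operation with substitution application. The preimage condition $\{w^i\}_{\psi^i} = \{v^i\}$ imposed on filtration morphisms is chosen precisely so that the up-closed sets controlling the functorialisations on both sides line up, allowing the naturality of $\uparrow$ established in the construction of the meta-operation to apply directly.
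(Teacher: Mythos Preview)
Your proposal is correct and follows essentially the same approach as the paper: both arguments verify adaptedness, the padding-data typing, and the defining equation $(\dagger)$ by pushing $\psi^{i+1}$ through using the commutative square \eqref{paddingmorphism}, with the key technical step being the functorialisation--substitution compatibility $(\padded^i_\mathbf{p} \uparrow v^i)\app{\psi^i \uparrow w^i} = (\padded^i_\mathbf{p}\app{\psi^i}) \uparrow w^i$ (Lemma~\ref{funcsubs} in the paper), enabled precisely by the preimage condition $\{w^i\}_{\psi^i} = \{v^i\}$. Your explicit inductive structuring is slightly more careful than the paper's presentation, but the content is the same.
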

\begin{proof}
  Consider a type family $A^i = s^{i-1} \to t^{i-1}$ adapted to
  \(\mathbf{\Gamma}\). Because \(\boldsymbol{\psi}\) is a morphism of padding
  filtrations, \(w^{m} = v^{m}\app{\psi^{m}}\). Since
  $\Gamma^{m}\vdash v^m : A^m$ and we have:
  \[
    \Gamma^{m}\vdash w^m : A^m\app{\psi^m}
  \]
  In context \(\Gamma^{i+1}\), the terms \(s^{i}\) and \(t^{i}\) have type
  \(A^{i}\app{\sigma^{i+1}}\), so in context \(\Delta^{i+1}\), the types
  \(s^i\app{\psi^{i+1}}\) and\(t^i\app{\psi^{i+1}}\) have type
  \(A^i\app{\sigma^{i+1}\cir \psi^{i+1}}\). The following equality, proved by
  \eqref{paddingmorphism} then lets us conclude that
  \(\mathbf{A}\app{\boldsymbol{\psi}}\) is adapted to \(\mathbf{\Delta}\):
  \[
    A^i\app{\sigma^{i+1}\cir
    \psi^{i+1}} = A^i\app{(\psi^i\uparrow w^i)\cir \tau^{i+1}} =
  A^i\app{\psi^i}\app{ \tau^{i+1} }
  \]

  Let \(\mathbf{p}\) be padding data for \(\mathbf{A}\). In context
  \(\Gamma^{i+1}\), the term \(p^{i}\) has type:
  \[
    s^i \to \padded^i_{\mathbf{p}}\app{\inj^-\cir \sigma^{i+1}}
  \]
  Therefore in context \(\Delta^{i+1}\), the term \(p^{i}\app{\psi^{i+1}}\) has type:
  \[
    s^i\app{\psi^{i+1}} \to
    \padded^i_{\mathbf{p}}\app{\inj^-\cir \sigma^{i+1} \cir \psi^{i+1}}
  \]
  Furthermore, by equation~(\ref{paddingmorphism}), and Lemmas~\ref{includefunctorialised}
  and~\ref{disjoint-implies-inclusion-is-identity}, since
  \(w^i \notin \supp(\padded^i_\mathbf{p}\app{\psi^i})\), the target of this
  type satisfies:
  \begin{align*}
    \padded^i_{\mathbf{p}}\app{\inj^-\cir \sigma^{i+1} \cir \psi^{i+1}}
    &= \padded^i_{\mathbf{p}}\app{\inj^-\cir (\psi^i \uparrow w^i) \cir \tau^{i+1}}\\
    &= \padded^i_{\mathbf{p}}\app{\psi^i \cir \inj^-_{\Delta^{i}} \cir \tau^{i+1}}\\
    &= \padded^i_{\mathbf{p}}\app{\psi^i \cir \tau^{i+1}}
  \end{align*}
  Similarly, one can show that:
  \[
    \Delta^{i+1}\vdash q^{i}\app{\psi^{i+1}} :
    \padded^i_\mathbf{p}\app{\inj^+\cir \sigma^{i+1}} \to t^{i}\app{\psi^{i+1}}
  \]

  Finally, consider padding data \(\padded_{\mathbf{p}}\) associated to
  \(\mathbf{p}\). We show that $\Theta^i_{\mathbf{p}}\app{\psi^i}$ satisfies the
  defining formula (\ref{definingeq}), using \eqref{paddingmorphism} and
  Lemma~\ref{funcsubs}:
  \begin{align*}
    & \Theta^{i+1}_{\mathbf{p}}\app{\psi^{i+1}} \\
    & = (p^i \s_i (\padded^i_\mathbf{p}\uparrow v^i)\app{\sigma^{i+1}}\s_i q^i)\app{\psi^{i+1}}\\
    & = p^i\app{\psi^{i+1}} \s_i (\padded^i_\mathbf{p}\uparrow v^i)\app{\sigma^{i+1}\cir \psi^{i+1}}\s_i q^i\app{\psi^{i+1}}\\
    & = p^i\app{\psi^{i+1}} \s_i (\padded^i_\mathbf{p}\uparrow v^i)\app{(\psi^i \uparrow w^i)\cir \tau^{i+1}}\s_i q^i\app{\psi^{i+1}}\\
    &= p^i\app{\psi^{i+1}} \s_i (\padded^i_\mathbf{p}\app{\psi^i} \uparrow w^i)\app{\tau^{i+1}}\s_i q^i\app{\psi^{i+1}} \tag*{\qedhere}
  \end{align*}
\end{proof}

  We now proceed with the definition of the unbiased paddings of the identity,
  which are padding data for the same type as \(\mathbf{u}^{n}_{k,0}\) and
  \(\mathbf{u}^{n}_{k,n-1}\), but distinct from them. In
  Sec.~\ref{subsec:repadding}, we define the \emph{unbiasing repaddings} to
  relate these distinct paddings.

  \begin{definition}~\label{def:biased-pad-id}
    We define two morphisms of filtrations:
    \begin{align*}
      \boldsymbol{\psi}_{\rho} : \mathbf{\Gamma}^{n}_{k,0} & \to \mathbf{\Gamma}_{\rho}^{n}
      & \boldsymbol{\psi}_{\tilde\rho} : \mathbf{\Gamma}^{n}_{k,n-1} & \to \mathbf{\Gamma}_{\tilde\rho}^{n} \\
      v^{i}\app{\psi_{\rho}^{i}} &= v^{i}_{0}
      & d^{i}\app{\psi_{\tilde\rho}^{i}} &= v^{i}_{n-1}
    \end{align*}
    Applying these morphisms to the biased paddings, we obtain new padding data,
    that we call the \emph{biased paddings of the identity}
    \begin{align*}
      \mathbf{p}^{n}_{\rho}\app{\boldsymbol{\psi}_{\rho}}
      && \mathbf{p}^{n}_{\tilde\rho}\app{\boldsymbol{\psi}_{\tilde\rho}}
    \end{align*}
  \end{definition}

  We conclude this section with a presentation of suspension of paddings. While
  it does not appear in the construction of the steps presented in
  Figure~\ref{2dEH}, this notion still plays an important role for constructing
  the cells \(\eh^{n}_{k,l}\). It is central in Lemma~\ref{lemma:eh-susp},
  allowing us to leverage the suspension meta-operation to construct
  \(\eh^{n+1}_{k+1,l+1}\) from \(\eh^{n}_{k,l}\).
\begin{definition}
  Let \(\mathbf{\Gamma} = (\Gamma^i,v^i,\sigma^{i})_{i=m}^n\) be a filtration,
  and \(\mathbf{p}= (p^i,q^i)_{i=m}^n\) be padding data. We define:
  \begin{align*}
    \Sigma\mathbf{\Gamma}&:=(\Sigma\Gamma^{i-1},\Sigma
  v^{i-1},\Sigma\sigma^{i})_{i=m+1}^{n+1} \\
    \Sigma \mathbf{p} &:= (\Sigma p^{i-1},\Sigma q^{i-1})_{i=m+1}^{n+1}
  \end{align*}
\end{definition}

\begin{restatable}{lemma}{paddingsuspension}
  \label{lemma:padding-suspension}
  If $\mathbf{\Gamma}$ is a filtration, then so is $\Sigma\mathbf{\Gamma}$. If
  the type family $\mathbf{A}$ is adapted to $\mathbf{\Gamma}$, then
  $\Sigma \mathbf{A}$ is adapted to $\Sigma\mathbf{\Gamma}$, and if $\mathbf{p}$
  is padding data for $\mathbf{A}$, then $\Sigma \mathbf{p}$ is padding data for
  $\Sigma \mathbf{A}$, with associated padding $\Sigma\Theta_\mathbf{p}$:
\end{restatable}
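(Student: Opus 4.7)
The plan is to verify the three claims in sequence, each reducing to the observation that the suspension meta\=/operation commutes with all relevant syntactic constructions: substitution application, substitution composition, functorialisation, and term composition. The core compatibility lemmas needed are $\Sigma(A\app{\sigma}) = (\Sigma A)\app{\Sigma\sigma}$, $\Sigma(\tau\cir\sigma) = (\Sigma\tau)\cir(\Sigma\sigma)$, and $\Sigma(\Gamma\uparrow v) = (\Sigma\Gamma)\uparrow(\Sigma v)$ together with its companion $\Sigma(t\uparrow v) = (\Sigma t)\uparrow(\Sigma v)$; the first two are essentially the content of Definition~\ref{suspdefinition}, while the latter follow by induction on the syntax. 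Throughout, I use that $\dim(\Sigma\Gamma) = \dim\Gamma + 1$ and that $\Sigma$ preserves the variable names so that $\overrightarrow{\Sigma v} = \Sigma\overrightarrow{v}$.

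First, I verify that $\Sigma\mathbf{\Gamma} = (\Sigma\Gamma^{i-1},\Sigma v^{i-1},\Sigma\sigma^i)_{i=m+1}^{n+1}$ is a filtration. The dimension of $\Sigma\Gamma^{i-1}$ is $i$ as required; $\Sigma v^{i-1}$ is a variable of $\Sigma\Gamma^{i-1}$ by Definition~\ref{suspdefinition}; and the substitution $\Sigma\sigma^i$ has type $\Sigma\Gamma^i \to \Sigma(\Gamma^{i-1}\uparrow v^{i-1}) = (\Sigma\Gamma^{i-1})\uparrow(\Sigma v^{i-1})$. The compatibility $\overrightarrow{\Sigma v^{i-1}}\app{\Sigma\sigma^i} = \Sigma v^i$ is obtained by applying $\Sigma$ to the corresponding identity for $\mathbf{\Gamma}$.

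Next, if $\mathbf{A} = (A^i)_{i=m}^n$ is adapted to $\mathbf{\Gamma}$, with decompositions $A^{i+1} = s^i \to_{A^i\app{\sigma^{i+1}}} t^i$, applying $\Sigma$ yields $\Sigma A^{i+1} = \Sigma s^i \to_{(\Sigma A^i)\app{\Sigma\sigma^{i+1}}} \Sigma t^i$, using commutativity of $\Sigma$ with substitution application. The judgment $\Gamma^m \vdash v^m : A^m$ suspends to $\Sigma\Gamma^m \vdash \Sigma v^m : \Sigma A^m$, establishing adaptedness of $\Sigma\mathbf{A}$ to $\Sigma\mathbf{\Gamma}$.

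For the final and main claim, I show that $\Sigma\mathbf{p}$ is padding data with associated padding $\Sigma\Theta_{\mathbf{p}}$. Applying $\Sigma$ to the type of $p^i$ gives a judgment of the form $\Sigma\Gamma^{i+1} \vdash \Sigma p^i : \Sigma s^i \to \Sigma(\Theta^i_\mathbf{p}\app{\inj^-\cir\sigma^{i+1}})$, and the target simplifies to $(\Sigma\Theta^i_\mathbf{p})\app{\inj^-\cir\Sigma\sigma^{i+1}}$ using that $\Sigma\inj^\pm = \inj^\pm$ for the inclusions associated to $(\Sigma\Gamma^i)\uparrow(\Sigma v^i)$; the same works for $\Sigma q^i$. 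The claim that the associated padding of $\Sigma\mathbf{p}$ equals $\Sigma\Theta_\mathbf{p}$ is proved by induction on $i$, verifying the defining equation~(\ref{definingeq}) after suspension, for which the key step is
\begin{align*}
  \Sigma\Theta^{i+1}_\mathbf{p}
  &= \Sigma(p^i \s_i (\Theta^i_\mathbf{p}\uparrow v^i)\app{\sigma^{i+1}} \s_i q^i) \\
  &= \Sigma p^i \s_{i+1} ((\Sigma\Theta^i_\mathbf{p})\uparrow(\Sigma v^i))\app{\Sigma\sigma^{i+1}} \s_{i+1} \Sigma q^i,
\end{align*}
using that $\Sigma$ commutes with composites and with functorialisation. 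The main obstacle is precisely this last compatibility between suspension and functorialisation, which needs to be established by induction over the derivation rules defining the functorialisation meta\=/operation; once invoked, the rest of the proof is a bookkeeping exercise.
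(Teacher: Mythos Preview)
Your proposal is correct and takes essentially the same approach as the paper: verify the three claims in turn, each by pushing $\Sigma$ through the relevant syntactic constructions, with the crux being the compatibility $\Sigma(\Gamma\uparrow v) = (\Sigma\Gamma)\uparrow v$ and $\Sigma(t\uparrow v) = (\Sigma t)\uparrow v$. The paper isolates exactly these compatibilities as separate lemmas in its appendix (Lemmas~\ref{lemma:suspfunc-context} and~\ref{lemma:suspfunc-term+subs}) and otherwise proceeds identically to your outline; note also that since $\Sigma x = x$ on variables, the paper simply writes $v^{i-1}$ rather than $\Sigma v^{i-1}$ throughout.
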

\begin{proof}
  First we show that \(\Sigma\mathbf{\Gamma}\) is a filtration. The variable
  \(v^{i-1}\) is a maximal dimension variable in \(\Sigma\Gamma^{i-1}\) which is
  of dimension \(i\). By Lemma~\ref{lemma:suspfunc-context}, we have:
  \[
    \Sigma(\Gamma^{i-1} \uparrow v^{i-1}) = (\Sigma \Gamma^{i-1}) \uparrow v^{i-1}
  \]
  The substitution
  \(\Sigma\sigma^{i} : \Sigma \Gamma^i\to (\Sigma\Gamma^{i-1}) \uparrow
  v^{i-1}\) satisfies, by definition of the suspension of substitutions:
  \[
    \fun{v^{i-1}}\app{\Sigma\sigma^{i}} = v^{i}
  \]

  Let \(\mathbf{A} = (A^{i} = s^{i-1} \to t^{i-1})\) a type family adapted to
  \(\mathbf{\Gamma}\). We show that the type family \(\Sigma\mathbf{A}\) is
  adapted to \(\Sigma\mathbf{\Gamma}\). Since in context \(\Gamma^{m}\) the
  variable \(v^{m}\) has type \(A^{m}\), in context \(\Sigma\Gamma^{m}\), the
  same variable has type \(\Sigma A^{m}\). Moreover, in context
  \(\Gamma^{i+1}\), the terms \(s^{i}\) and \(t^{i}\) have type
  \(A^i\app{\sigma^{i+1}}\), so by Lemma~\ref{lemma:susp-results}, in context
  \(\Sigma\Gamma^{i+1}\), the terms \(\Sigma s^{i}\) and \(\Sigma t^{i}\) have
  type:
  \[
    (\Sigma A^i)\app{\Sigma \sigma^{i+1}}
  \]

  Finally, consider a padding \(\padded_{\mathbf{p}}\) associated to
  \(\mathbf{p}\), we show that \(\Sigma\padded_{\mathbf{p}}\) is a padding
  associated to \(\mathbf{p}\). First, we note that in context \(\Gamma^{i+1}\),
  the term \(p^{i}\) has type:
  \[
    s^i \to \Theta^i_\mathbf{p}\app{\inj^-\cir\sigma^{i+1}}
  \]
  Thus, by Lemmas~\ref{lemma:susp-results} and~\ref{lemma:suspfunc-context}, the term
  \(\Sigma p^{i}\) has the following type in context \(\Sigma\Gamma^{i+1}\):
  \[
    \Sigma s^i \to (\Sigma \Theta^i_\mathbf{p})\app{\inj^- \cir \Sigma
      \sigma^{i+1}}
  \]
  We now check that $\Sigma \Theta^i_\mathbf{p}$ satisfies (\ref{definingeq})
  for $\Sigma{\mathbf{p}}$. Using
  Lemmas~\ref{suspcomp},~\ref{lemma:susp-results} and~\ref{lemma:suspfunc-term+subs}, we have:
  \begin{align*}
    \Sigma\Theta^{i+1}_\mathbf{p} &=\Sigma(p^i \s_i (\Theta^i_\mathbf{p} \uparrow v^i)\app{\sigma^{i+1}} \s_i q^i)\\
    & =\Sigma p^i \s_{i+1} (\Sigma(\Theta^i_\mathbf{p} \uparrow v^i))\app{\Sigma \sigma^{i+1}} \s_{i+1} \Sigma q^i\\
    & =\Sigma p^i \s_{i+1} (\Sigma \Theta^i_\mathbf{p} \uparrow v^i)\app{\Sigma \sigma^{i+1}} \s_{i+1} \Sigma q^i \tag*{\qedhere}
  \end{align*}
\end{proof}


\subsection{Repadding}\label{subsec:repadding}
\noindent
We now introduce repadding, which allows us to change between two padding datas
for the same type family. This will constitute the heart of the construction of
$X_2$ in Figure~\ref{2dEH}.

\begin{definition}
  Consider a filtration \((\Gamma^i,v^i,\sigma^{i})_{i=m}^n\), a type family
  $\mathbf{A}$ adapted to it, and two sets of padding data for \(\mathbf{A}\):
\begin{align*}
  \mathbf{p}_0 &= (p_0^i,q_0^i)_{i=m}^{n-1}
  & \mathbf{p}_1 &= (p^i_1, q^i_1)_{i=m}^{n-1}
\end{align*}
We define sets of \emph{repadding data} $\mathbf{r} : \mathbf{p}_0\to\mathbf{p}_1$ and their associated
repadding \(\repad^{i}_{\mathbf{r}}\) together by mutual induction. A set of
repadding data {\({\mathbf{r}: \mathbf{p}_0\to\mathbf{p}_1}\) consists of families of terms \((f^i,g^i)_{i=m}^{n-1}\) of type:}
\begin{align*}
  \Gamma^{i+1} &\vdash f^i
  : p^i_0 \s_i \repad^i_{\mathbf{r}}\app{\inj^- \cir \sigma^{i+1}} \to p^i_1 \\
  \Gamma^{i+1} &\vdash g^i
  : q^i_0 \to \repad^i_{\mathbf{r}}\app{\inj^+ \cir \sigma^{i+1}}\s_i q^i_1
\end{align*}
Its associated repadding, the term,
\(\Gamma^{i} \vdash \repad^i_{\mathbf{r}} : \padded^i_{\mathbf{p}_0} \to
\padded^i_{\mathbf{p}_1}\) is defined by:
\begin{align*}
  \repad^m_{\mathbf{r}} &:= \id_{v^m} \\
  \repad^{i+1}_{\mathbf{r}} &:= \hexcomp\app{f^i, ((\repad^i_{\mathbf{r}} \uparrow v^i)\app{\sigma^{i+1}})^{-1}, g^i}
\end{align*}
\end{definition}

\begin{figure}

  \centering
  \[
    \begin{tikzcd}[ampersand replacement=\&,column sep=5em]
      \& {\bdot} \& {\bdot} \\
      {s^i} \&\&\& {t^i} \\
      \& {\bdot} \& {\bdot}
      \arrow["{(\Theta^i_{\mathbf{p}_0} \uparrow v^i)\app{\sigma^{i+1}}}"
            , from=1-2, to=1-3]
      \arrow[from=1-2, to=3-2]
      \arrow[""{name=0, anchor=center, inner sep=0}, "{q^i_0}", from=1-3, to=2-4]
      \arrow["{\scriptstyle (\repad^i_\mathbf{r} \uparrow
        v^i)\app{\sigma^{i+1}}^{\text{-}1}}"{description}
      , shorten >=6pt, shorten <=6pt, Rightarrow, from=1-3, to=3-2]
      \arrow[from=1-3, to=3-3]
      \arrow["{p^i_0}", from=2-1, to=1-2]
      \arrow[""{name=1, anchor=center, inner sep=0}, "{p^i_1}"', from=2-1, to=3-2]
      \arrow["{(\Theta^i_{\mathbf{p}_1} \uparrow v^i)\app{\sigma^{i+1}}}"'
      , from=3-2, to=3-3]
      \arrow["{q^i_1}"', from=3-3, to=2-4]
      \arrow["{f^i}"{description}, shorten >=6pt, shorten <=6pt, Rightarrow, from=1-2, to=1]
      \arrow["{g^i}"{description}, shorten >=6pt, shorten <=6pt, Rightarrow, from=0, to=3-3]
    \end{tikzcd}
  \]
  \caption{The repadding cell \(\repad^{i+1}_{\mathbf{r}}\).}
  \label{fig:repadding}
\end{figure}

\noindent
The definition of \(\repad^{i+1}_{\mathbf{r}}\), illustrated in
Figure~\ref{fig:repadding}, uses an inverse, which exists by
Lemma~\ref{diminverse}. We now introduce the \emph{unbiasing repaddings} as the
crucial ingredients of cell $X_2$ of Figure~\ref{2dEH}, allowing us to change a
biased padding applied to the identity into an unbiased padding.


\begin{definition}[Unbiasing repaddings]\label{def:biased-repadding}
  Given \(n\in \N\), we recall the biased paddings of the identity
  \(\mathbf{p}^{n}_{\rho}\app{\boldsymbol{\psi}_{\rho}}\) from
  Def.~\ref{def:biased-pad-id} and the unbiased paddings
  \(\mathbf{u}^{n}_{n-1,0}\) from Def.~\ref{def:unbiasedpadding}. We define a
  set of repadding data
  \({\mathbf{r}^{n}_{\rho \to u} = (f^{i}_{\rho\to u},g^{i}_{\rho\to u})_{i=1}^{n-1}}\) from
  \(\mathbf{p}^{n}_{\rho}\app{\boldsymbol{\psi}_{\rho}}\) to
  \(\mathbf{u}^{n}_{n-1,0}\) and its associated repadding, {which we write just as \(\repad^{i}_{\rho\to u}\)},
  as follows:
  \begin{align*}
    f_{\rho\to u}^{i}
    &:= \coh(\point : p^i_{\rho}\app{\psi_{\rho}^{i}} \s_i \repad^i_{\rho\to u}\app{\inj^- \cir
      \sigma^{i+1}}\to p^i_{n-1,0})[x]\\
    g_{\rho\to u}^{i}
    &:= \coh(\point : {q}^i_{\rho}\app{\psi_{\rho}^{i}}\to\repad^i_{\rho\to u}\app{\inj^+ \cir \sigma^{i+1}} \s_i q^i_{n-1,0})[x]
  \end{align*}
  Similarly, we define repadding data
  \(\mathbf{r}_{\tilde\rho \to u}^{n} = (f^{i}_{\tilde\rho\to u},g^{i}_{\tilde\rho\to u})_{i=1}^{n-1}\) from
   \(\mathbf{p}^{n}_{\tilde\rho}\app{\boldsymbol{\psi}_{\tilde\rho}}\) to \(\mathbf{u}^{n}_{0,n-1}\),
  and its associated repadding \(\repad^{i}_{\tilde\rho\to u}\), as follows:
  \begin{align*}
    f_{\tilde\rho \to u}^i &:= \coh(\point : p^i_{\tilde\rho}\app{\psi_{\tilde\rho}^{i}} \s_i \repad^i_{\tilde\rho \to u}\app{\inj^- \cir \sigma^{i+1}} \to p^i_{0,n-1})[x]\\
    g_{\tilde\rho \to u}^i &:= \coh(\point : q^i_{\tilde\rho}\app{\psi_{\tilde\rho}^{i}} \to \repad^i_{\tilde\rho \to u}\app{\inj^+ \cir \sigma^{i+1}} \s_i q^i_{0,n-1})[x]
  \end{align*}
  We call the associated repaddings the \emph{right-unbiasing repaddings}.
  They provide the following equivalences, which are needed for $X_2$ in Figure~\ref{2dEH}:
\begin{align*}
  \Gamma^n_0 &\vdash \repad^n_{\rho \to u} : \padded^n_{\rho}\app{v^n_0} \to \padded^n_{n-1,0} \\
  \Gamma^n_{n-1} &\vdash \repad^n_{\tilde\rho \to u} : \padded^n_{\tilde\rho}\app{v^n_{n-1}} \to \padded^n_{0,n-1}
\end{align*}
        We also define the \emph{left-unbiasing repadding}:
\begin{align*}
  \repad_{\lambda \to u} &:= (\repad_{\rho \to u})^{\op}
  & \repad_{\tilde\lambda \to u} &:=(\repad_{\tilde\rho \to u})^{\op}
\end{align*}
By Proposition~\ref{lemma:symmetry-main}, these terms terms satisfy:
\begin{align*}
  \Gamma^n_0 &\vdash \repad^n_{\lambda \to u} : \padded^n_{\lambda}\app{v^n_0} \to \padded^n_{n-1,0} \\
  \Gamma^n_{n-1} &\vdash \repad^n_{\tilde\lambda \to u} : \padded^n_{\tilde\lambda}\app{v^n_{n-1}} \to \padded^n_{0,n-1}
\end{align*}
\end{definition}

\subsection{Pseudo-Functoriality of the Unbiased Padding}\label{subsec:pseudofunctoriality}
\noindent
The key idea for $X_3$ in Figure~\ref{2dEH} is to construct a witness that relates the unbiased padding of a composite, with the composite of unbiased paddings. We think of this as a pseudo-functoriality property, since it is exactly one of the pieces of data for pseudo-functoriality of a 2\=/functor.

\begin{proposition}\label{construction:Xi}
  In the context $(\Gamma^n_l, w : I^{n-1}_k)$ we can construct a cell:
    \[
      \Xi^n_{k,l} : \padded^n_{k,l}\app{v^n_l} \s_{n-1} \padded^n_{k,l}\app{w} \to
      \padded^n_{k,l}\app{v^n_l \s_{n-1} w}
    \]
  \end{proposition}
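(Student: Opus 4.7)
The plan is to argue by induction on $n$, starting from $n = m := \min\{k,l\} + 1$.

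\textbf{Base case.} When $n = m$, the defining equation~\eqref{definingeq} of the padding gives $\padded^m_{k,l} = v^m_l$, and at dimension $n-1 = m-1$ the convention on composition of equal cells (after Def.~\ref{def:comp}) forces $(\id^{m-1}_x)^{\s_k}$ and $(\id^{m-1}_x)^{\s_l}$ to coincide with $\id^{m-1}_x$, so the types $I^{m-1}_k$ and $I^{m-1}_l$ agree and the substitution of $w$ for $v^n_l$ is well-typed. Both sides of the goal reduce to $v^n_l \s_{n-1} w$, and I would take $\Xi^m_{k,l}$ to be the identity.

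\textbf{Inductive step.} For $n > m$, unfold $\padded^n_{k,l}\app{u}$ for arbitrary $u$ using~\eqref{definingeq}:
\[
\padded^n_{k,l}\app{u} = p^{n-1}_{k,l} \s_{n-1} F(u) \s_{n-1} q^{n-1}_{k,l},
\]
writing $F(u) := (\padded^{n-1}_{k,l} \uparrow v^{n-1}_l)\app{\sigma^n_l}$ with $\fun{v^{n-1}_l}$ further substituted by $u$. Taking $u = v^n_l$ and $u = w$ and composing at dimension $n-1$, the middle factor $q^{n-1}_{k,l} \s_{n-1} p^{n-1}_{k,l}$ appears between $F(v^n_l)$ and $F(w)$; since $q^{n-1}_{k,l} = (p^{n-1}_{k,l})^{-1}$ by Def.~\ref{def:unbiasedpadding}, a cancellator coherence collapses this factor to an identity. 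The construction then reduces, up to this coherence and one more application of~\eqref{definingeq}, to building an equivalence
\[
F(v^n_l) \s_{n-1} F(w) \;\longrightarrow\; F(v^n_l \s_{n-1} w),
\]
whiskered on either side by $p^{n-1}_{k,l}$ and $q^{n-1}_{k,l}$.

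\textbf{The main obstacle} is precisely this final equivalence, which is a pseudo-functoriality statement for the functorialisation $(\padded^{n-1}_{k,l} \uparrow v^{n-1}_l)$ in the direction $\fun{v^{n-1}_l}$. Since $\padded^{n-1}_{k,l}$ is itself a composite built inductively, the inductive hypothesis $\Xi^{n-1}_{k,l}$ supplies pseudo-functoriality one dimension below; lifting it through the functorialisation relies on the naturality meta-operation of Sec.~\ref{sec:functorialisation}, which ensures that the functorialisation of a composite decomposes, up to a coherent cell, into the composite of functorialisations. Assembling the cancellator coherence, this lifted pseudo-functoriality, and the re-folding via~\eqref{definingeq} yields the desired $\Xi^n_{k,l}$.
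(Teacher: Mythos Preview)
Your overall shape is right—induction with an identity at the bottom and a cancellation $q^{n-1}_{k,l}\s_{n-1}p^{n-1}_{k,l}\to\id$ at the top—but the induction variable is wrong, and this is where the argument breaks.

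You induct on $n$ and propose to use $\Xi^{n-1}_{k,l}$ as the hypothesis. But $\Xi^{n-1}_{k,l}$ is a statement about $\s_{n-2}$-composition of the $(n-1)$-cells $\padded^{n-1}_{k,l}\app{-}$, whereas what you need after the cancellation is
\[
F(v^n_l)\ \s_{n-1}\ F(w)\ \longrightarrow\ F(v^n_l\s_{n-1}w),
\]
a statement about $\s_{n-1}$-composition of the $n$-cells $F=(\padded^{n-1}_{k,l}\uparrow v^{n-1}_l)$. These are not the same, and your appeal to the naturality meta-operation does not bridge them: naturality produces a filler of a square of the form $t\app{\inj^-}\s(-)\to(-)\s t\app{\inj^+}$, not a witness that a functorialisation preserves composition. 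There is no step in Section~\ref{sec:functorialisation} that turns $\Xi^{n-1}_{k,l}$ into the pseudo-functoriality of $F$ that you need.

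The paper fixes this by keeping $n$ fixed and inducting on the unfolding level $i$ instead, building cells $\Xi^{i\uparrow n-i}_{k,l}$ for the \emph{iterated} functorialisations $\padded^i_{k,l}\uparrow^{n-i}v^i_l$. The crucial fact (proved as a claim in the proof) is that
\[
\padded^i_{k,l}\uparrow^{j}v^i_l \;=\; p^{i-1}_{k,l}\ \s_{i-1}\ (\padded^{i-1}_{k,l}\uparrow^{j+1}v^{i-1}_l)\ \s_{i-1}\ q^{i-1}_{k,l},
\]
so each inductive step (for $m<i<n$) passes from level $i$ to level $i-1$ via an \emph{interchanger} between $\s_{n-1}$ and $\s_{i-1}$, not via the previous $\Xi^{n-1}$. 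The cancellator you describe only enters once, at the very top step $i=n$, where the whiskering dimension $i-1$ coincides with the composition dimension $n-1$. Re-indexing your induction in this way—on the depth of the iterated functorialisation rather than on the ambient dimension—closes the gap.
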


\begin{proof}
    Recall the filtration
    \(\mathbf{\Gamma^{n}_{k,l}} =
    (\Gamma^{i}_{l},v^{i}_{l},\sigma^{i})_{i=m}^{n}\) for the unbiased padding,
    defined in Def~\ref{def:unbiasedpadding}. Given a term \(\Gamma^{i}_{l}\vdash t:A\), we introduce the notation:
    \begin{align*}
      t \uparrow^0 v^i_{l} &:= t \\
      t \uparrow^{k+1} v^i_{l} &:= ((t \uparrow^k v^i_{l}) \uparrow v^{i+k}_{l})\app{\sigma^{i+k+1}}
    \end{align*} We construct by induction on
    $i \geq m$ terms \(\Xi^{i\uparrow n-i}_{k,l}\) and then specialise this
    construction to define \(\Xi^{n}_{k,l}:=\Xi^{n\uparrow 0}_{k,l}\). The terms
    \(\Xi^{i\uparrow n-i}_{k,l}\) that we construct will be valid in the context
    \(\Gamma^{n}_{l}\), and will have source:
    \[
      (\padded^i_{k,l}\uparrow^{n-i} v^i_l)\app{v^n_l}
      \s_{n-1}(\padded^i_{k,l}\uparrow^{n-i} v^i_l)\app{w}
    \]
    and target:
      \[
      (\padded^i_{k,l}\uparrow^{n-i} v^i_l)\app{v^n_l \s_{n-1} w}
    \]
We first claim that, for any $m < i\leq n$ and $0 \leq j \leq n-i$, the unbiased padding satisfies the following equation:
  \[
    \padded_{k,l}^i \uparrow^{j} v^i = p^{i-1}_{k,l} \s_{i-1} (\Theta_{k,l}^{i-1}
    \uparrow^{j+1} v^{i-1}_l) \s_{i-1} q^{i-1}_{k,l}
  \]

To see this, we will proceed by induction on $j$. When $j=0$, using the defining formula
  \eqref{definingeq}, we have:
  \begin{align*}
    \Theta^i_{k,l}\uparrow^0 v^n
    &= \Theta^i_{k,l} \\
    &= p^{i-1}_{k,l} \s_{i-1} (\padded^{i-1}_{k,l}\uparrow
      v^{i-1}_l)\app{\sigma^i} \s_{i-1} q^{i-1}_{k,l} \\
    &= p^{i-1}_{k,l} \s_{i-1} (\padded^{i-1}_{k,l}\uparrow^1 v^{i-1}_l)
      \s_{i-1} q^{i-1}_{k,l}
  \end{align*}

  When $j > 0$, we have, by induction and using that
  $v^{i+j-1}\notin\supp(p^{i-1}_{k,l})$ or $\supp(q^{i-1}_{k,l})$, so these are
  fixed by $\sigma^{i+j}$:
  \begin{align*}
    &\Theta^i_{k,l}\uparrow^j v^i \\
    &= ((\Theta^i_{k,l} \uparrow^{j-1} v^i) \uparrow
      v^{i+j-1})\app{\sigma^{i+j}} \\
    &= ((p^{i-1}_{k,l}
      \s_{i-1}(\padded^{i-1}_{k,l}\uparrow^{j}v^{i-1}_l)\s_{i-1}q^{i-1}_{k,l})
      \uparrow v^{i+j-1})\app{\sigma^{i+j}} \\
    &= p^{i-1}_{k,l} \s_{i-1}((\padded^{i-1}_{k,l}\uparrow^{j}v^{i-1}_l)\uparrow
      v^{i+j-1})\app{\sigma^{i+j}}\s_{i-1}q^{i-1}_{k,l} \\
    & = p^{i-1}_{k,l} \s_{i-1}(\padded^{i-1}_{k,l}\uparrow^{j+1}v^{i-1}_l)
      \s_{i-1}q^{i-1}_{k,l}
  \end{align*}
This proves the claim.

Returning to the construction of $\Xi_{k,l}^{i\uparrow n-i}$, when $i=m$, we define
    \(\Xi^{m\uparrow (n-m)}_{k,l} := \id(v^n_l \s_{n-1} w)\) When $m<i<n$, we
    have:
    \[
      \padded_{k,l}^i \uparrow^{n-i} v^i = p^{i-1}_{k,l} \s_{i-1} (\Theta_{k,l}^{i-1}
      \uparrow^{n-i+1} v^{i-1}_l) \s_{i-1} q^{i-1}_{k,l}
    \]
    by the claim above.
    This allows us define \(\Xi^{i\uparrow n-i}_{k,l}\) as a composite of an
    interchanger with an application of the whiskering of
    \(\Xi^{i-1\uparrow n-i+1}_{k,l}\). 
    Formally, to construct this interchanger we consider the pasting context $\mathbb{X}^{N}$ for \(N\geq 2\), given by:
    \[
    \left(
      \begin{array}{ll}
        d^0_L,d^0_- : \obj,\
        d^1_L : d^0_L \to d^0_-, \
        d^0_+ : \obj \\
        d^1_-, d^1_+ : d^{0}_- \to d^{0}_+,\ \ldots,\ d^{N-2}_-, d^{\label{fig:ps-interchanger-chi}N-2}_+ : d^{N-3}_- \to d^{N-3}_+,  \\
        d^{N-1}_-, d^{N-1}_0 : d^{N-2}_- \to d^{N-2}_+ ,\ d^N_T : d^{N-1}_- \to d^{N-1}_0 \\
        d^{N-1}_+ : d^{N-2}_0 \to d^{N-2}_+ ,\ d^{N}_B : d^{N-1}_0 \to d^{N-1}_+, \\
        d^0_R : \obj, d^1_R : d^0_+ \to d^0_R
      \end{array}
    \right)
  \]
    The
  pasting contexts \(\mathbb{X}^{2}\) and \(\mathbb{X}^{3}\) are illustrated in
  Figure~\ref{fig:ps-interchanger-xi}.
  
  In this context, given a term \(\mathbb{X}^{N}\vdash t:A\) whose
  \(0\)\=/dimensional source is \(d^{0}_{-}\) and whose \(0\)\=/target is
  \(d^{0}_{+}\), we write \(w(t)\) for the whiskering \(d^{1}_{L} \s_{0} t \s_{0} d^{1}_{R}\). We then define:
  \begin{align*}
    \chi_N := \coh(\mathbb{X}^{N} : w(d^{N}_{T}) \s_{N-1} w(d^{N}_{B}) \to w(d^{N}_{T}\s_{N-1}d^{1}_{R}))[\id_{\mathbb{X}^N}]
  \end{align*}
    
    Equipped with the interchangers $\chi_N$, we define $\Xi^{i\uparrow n-i}_{k,l}$ as the composite summarised by the diagram below, where \(t = \Theta_{k,l}^{i-1} \uparrow^{n-i+1} v^{i-1}_l\):
    \[
      \begin{tikzcd}[row sep=12pt]
        \left( p^{i-1}_{k,l} \!\s_{i-1}\! t \app{v^n_l} \!\s_{i-1}\!
          q^{i-1}_{k,l}\right) \!\s_{n-1}\! \left(p^{i-1}_{k,l} \!\s_{i-1}\!
          t\app{w} \!\s_{i-1}\! q^{i-1}_{k,l}\right)
        \ar[d, "\scriptstyle{\Sigma^{i-1}\chi_{n-i+1}}"]\\
        p^{i-1}_{k,l} \s_{i-1} \left( t\app{v^n_l}\s_{n-1} t\app{w} \right) \s_{i-1}
        q^{i-1}_{k,l}
        \ar[d,"\scriptstyle{\text{whiskering of $\Xi^{i-1\uparrow{n-i+1}}_{k,l}$}}"]\\
        p^{i-1}_{k,l} \s_{i-1} t\app{v^n_l \s_{n-1} w} \s_{i-1} q^{i-1}_{k,l}
      \end{tikzcd}
    \]
    Finally, for $i=n$ we define $\Xi^{n\uparrow 0}_{k,l}$ as the composite of two
    steps presented below, with \(t = \Theta_{k,l}^{n-1}\uparrow v^{n-1}_l\):
    \[
      \begin{tikzcd}[row sep=12pt]
        \left(p^{n-1}_{k,l} \!\s_{n\text{-}1}\!  t \app{v^n_l} \!\s_{n\text{-}1}\! q^{n-1}_{k,l}\right) \!\s_{n\text{-}1}\!
        \left(p^{n-1}_{k,l} \!\s_{n\text{-}1}\! t\app{w} \!\s_{n \text{-} 1}\! q^{n-1}_{k,l}\right)
        \ar[d,"\substack{\scriptstyle{\text{associators +}} \\
          \scriptstyle{\text{cancellator for the inverses
              $q^{n-1}_{k,l}$ and $p^{n-1}_{k,l}$}}}"]\\
        p^{n-1}_{k,l} \s_{n\text{-}1} \left( t\app{v^n_l} \s_{n\text{-}1} t \app{w} \right) \s_{n\text{-}1}
        q^{n-1}_{k,l}
        \ar[d,"\scriptstyle{\text{whiskering of $\Xi^{n-1\uparrow 1}_{k,l}$}}"]\\
        p^{n-1}_{k,l} \s_{n\text{-}1} t \app{v^n_l \s_{n\text{-}1} w} \s_{n\text{-}1} q^{n-1}_{k,l}
      \end{tikzcd}
    \]
\end{proof}

\begin{figure}
  \centering
  \[\begin{tikzcd}[column sep = 1cm]
        d^0_L & d^0_- & d^0_+ & d^0_R
        \arrow["{d^1_L}", from=1-1, to=1-2]
        \arrow[""{name=0, anchor=center, inner sep=0}, "{d^1_-}", bend left = 60, shift left = 5pt, from=1-2, to=1-3]
        \arrow[""{name=1, anchor=center, inner sep=0}, "{d^1_+}"', bend right = 60, shift right = 5pt, from=1-2, to=1-3]
        \arrow[""{name=2, anchor=center, inner sep=0}, "{d^1_0}"{description}, from=1-2, to=1-3]
        \arrow["{d^1_R}", from=1-3, to=1-4]
        \arrow["{d^2_T}", shorten <=3pt, shorten >=3pt, Rightarrow, from=0, to=2]
        \arrow["{d^2_B}", shorten <=3pt, shorten >=3pt, Rightarrow, from=2, to=1]
      \end{tikzcd}\]

    \[\begin{tikzcd}[column sep = 1cm]
        d^0_L & d^0_- &[1cm] d^0_+ & d^0_R
        \arrow["{d^1_L}", from=1-1, to=1-2]
        \arrow[""{name=0, anchor=center, inner sep=0}, "{d^1_-}", bend left = 80, shift left = 5pt, from=1-2, to=1-3]
        \arrow[""{name=1, anchor=center, inner sep=0}, "{d^1_+}"', bend right = 80, shift right = 5pt, from=1-2, to=1-3]
        \arrow["{d^1_R}", from=1-3, to=1-4]
        \arrow[""{name=2, anchor=center, inner sep=0}, "{d^2_-}"'{pos=0.4}, bend right = 70, shift right = 2pt, shorten <=7pt, shorten >=7pt, Rightarrow, from=0, to=1]
        \arrow[""{name=3, anchor=center, inner sep=0}, "{d^2_+}"{pos=0.4}, bend left = 70, shift left = 2pt, shorten <=7pt, shorten >=7pt, Rightarrow, from=0, to=1]
        \arrow[""{name=4, anchor=center, inner sep=0}, "{d^2_0}"{description}, shorten <=6pt, shorten >=6pt, Rightarrow, from=0, to=1]
        \arrow["{d^3_T}", shorten <=2pt, shorten >=4pt, Rightarrow, scaling nfold=3, from=2, to=4]
        \arrow["{d^3_B}", shorten <=4pt, shorten >=2pt, Rightarrow, scaling nfold=3, from=4, to=3]
\end{tikzcd}\]

  \caption{The pasting contexts $\mathbb{X}^2$ and $\mathbb{X}^3$}
    \label{fig:ps-interchanger-xi}
\end{figure}

Note that in the construction above, only the case $\Xi^{n\uparrow 0}_{k,l}$ used the specific padding data $\mathbf{u}^n_{k,l}$, where the witnesses of invertibility were used. The constructions of $\Xi^{i\uparrow n-1}_{k,}$ for $i<n$, however, work, for arbitrary padding data. We record this in the following slight generalisation:

\begin{lemma}\label{lemma:xi-generalised}
  Let $\mathbf{\Gamma} = (\Gamma^i,v^i,\sigma^i)_{i=m}^n$ be a filtration,
  $\mathbf{A}$ be a type adapted to \(\mathbf{\Gamma}\) and \(\mathbf{p}\)
  padding data for \(\mathbf{A}\). Suppose given a context \(\Delta\) together with
  terms and substitutions
  \begin{mathpar}
    \Delta \vdash v : s \to_B t
    \and \Delta \vdash w : t \to_B u \\
    \Delta  \vdash  \sigma_v : \Gamma\uparrow v^{n}
    \and \Delta \vdash  \sigma_w : \Gamma\uparrow v^{n}
    \and \Delta \vdash \sigma_{v \s w} : \Gamma^n \uparrow v^n
  \end{mathpar}
  such that
  \begin{align*}
    \fun{v^n}\app{\sigma_v} &= v
    & \fun{v^n}\app{\sigma_w} &= w
    & \fun{v^n}\app{\sigma_{v \s w}} &= v\s_n w
  \end{align*}
  and for every over variable \(x\),
  \[
    x\app{\sigma_{v}} = x\app{\sigma_{w}} = x\app{\sigma_{v\s w}}.
  \]
  Then, for any $0\leq k \leq n-m$, there exists a term
  $\Xi_{\mathbf{p}}^{n-k\uparrow k+1}$ which is derivable in context \(\Delta\)
  with type:
  \begin{align*}
    ((\padded^{n-k} \uparrow^k v^{n-k}) \uparrow v^n)\app{\sigma_v} &\s_n
    ((\padded^{n-k} \uparrow^k v^{n-k}) \uparrow v^n)\app{\sigma_w} \\
    \to & ((\padded^{n-k} \uparrow^k v^{n-k}) \uparrow v^n)\app{\sigma_{v*_n w}}
  \end{align*}
\end{lemma}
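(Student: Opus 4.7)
The plan is to adapt the inductive construction of Proposition~\ref{construction:Xi} to the more general setting, stopping one step short so that the cancellator step (which used invertibility of $p^{n-1}_{k,l}$ and $q^{n-1}_{k,l}$) is never invoked. The restriction $0 \le k \le n-m$ forces $k+1 \ge 1$, so $\Xi^{n-k\uparrow k+1}_{\mathbf{p}}$ always has at least one outer layer of functorialisation with respect to $v^n$; consequently only the interchanger-plus-whiskering case of the original construction is required, and that case works for arbitrary padding data.

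I would proceed by downward induction on $k$, from $k = n-m$ to $k = 0$. For the base case $k = n - m$, we have $n - k = m$ and $\padded^m_{\mathbf{p}} = v^m$ by definition, so iterated unfolding using $\fun{v^i}\app{\sigma^{i+1}} = v^{i+1}$ gives $(\padded^m \uparrow^{n-m} v^m) \uparrow v^n = \fun{v^n}$. The images under $\sigma_v$, $\sigma_w$ and $\sigma_{v \s_n w}$ are $v$, $w$ and $v \s_n w$ respectively, so the source and target both reduce to $v \s_n w$ and we may take $\Xi^{m \uparrow n-m+1}_{\mathbf{p}} := \id(v \s_n w)$. For the inductive step, the claim established inside the proof of Proposition~\ref{construction:Xi} (applied with $i = n-k$ and $j = k$) yields:
\[
  \padded^{n-k} \uparrow^{k} v^{n-k}
  = p^{n-k-1} \s_{n-k-1} (\padded^{n-k-1} \uparrow^{k+1} v^{n-k-1}) \s_{n-k-1} q^{n-k-1},
\]
and the outer terms $p^{n-k-1}$, $q^{n-k-1}$ are untouched by the further functorialisation $\uparrow v^n$ and by any of $\sigma_v, \sigma_w, \sigma_{v \s_n w}$, since they live strictly below $v^n$ in the filtration. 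The source of $\Xi^{n-k\uparrow k+1}_{\mathbf{p}}$ is therefore an $n$-composite of two \enquote{sandwich} expressions sharing the same outer $p^{n-k-1}$ and $q^{n-k-1}$; applying the suspension $\Sigma^{n-k-1}\chi_{k+2}$ of the interchanger from Proposition~\ref{construction:Xi} collapses this into a single sandwich, and whiskering by the inductive hypothesis $\Xi^{n-k-1 \uparrow k+2}_{\mathbf{p}}$ produces the desired target.

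The main obstacle, and indeed essentially the only new content compared to Proposition~\ref{construction:Xi}, is bookkeeping around the substitutions. One must verify that the hypothesis that $\sigma_v, \sigma_w, \sigma_{v \s_n w}$ agree outside $\fun{v^n}$ guarantees that the interchanger is applicable (its pasting context requires the outer $p^{n-k-1}$ and $q^{n-k-1}$ to match exactly on the two composands) and that the inductive hypothesis can be applied in the same context $\Delta$ with the same three substitutions. Both checks reduce to the observation that $p^{n-k-1}$ and $q^{n-k-1}$ lie in $\Gamma^{n-k}$, hence are fixed by all three substitutions and by the outer functorialisation with $v^n$. Beyond this, the construction is a direct transcription of the unbiased case.
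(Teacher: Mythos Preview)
Your proposal is correct and follows exactly the approach the paper intends: the paper's own proof of this lemma is the single sentence ``The proof is exactly similar to that of Proposition~\ref{construction:Xi}'', and you have faithfully carried out that transcription, correctly identifying that only the interchanger-plus-whiskering branch of the original argument is needed because $k+1\geq 1$ guarantees at least one outer functorialisation layer, so the cancellator step (the only place where invertibility of the padding data was used) is never reached.
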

\begin{proof}
  The proof is exactly similar to that of Proposition~\ref{construction:Xi}.
\end{proof}

\subsection{Iterated Padding}\label{subsec:iterated}
\noindent
We conclude our section on padding with additional results regarding relating
the nested padding of a padding with a padding performed in a single step. This is used in
Corollary~\ref{corollary:ehpkl} to construct generalisations of the cells \(\eh^{n}_{k,l}\).

\begin{definition}
  Let \(\mathbf{\Gamma} = (\Gamma^{i},v^{i},\sigma^{i})_{i=m}^{n}\) be a
  filtration and \(B^{i}\) a type family adapted to it. Denote \(A^{i}\) the
  type family defined by \(\Gamma^{i} \vdash v^{i} : A^{i}\). Let
  $\Gamma \setminus v^i$ be the context obtained by removing $v^i$. Since $v^i$
  is locally maximal, this context is well-formed. Moreover, due to the
  dimension of $B^i$, we have $\Gamma^i \setminus v^i \vdash B^i$. We define a
  family
  \(\mathbf{\Gamma}_{/B} := (\Gamma^{i}_{/B},w^{i},\sigma^{i}_{/B})^{n}_{i=m}\)
  as follows:
\begin{align*}
  \Gamma^{i}_{/B}
  &:= (\Gamma^{i}\setminus v^i ,w^{i} : B^{i}) \\
  \Gamma^{i}_{/B}
  &\vdash \sigma^{i}_{/B} : \Gamma^{i-1}_{/B}
    \uparrow w^{i} \\
  x\app{\sigma^{i}_{/B}}
  &:=
    \begin{cases}
      w^{i} & \text{if } x = \fun{w^{i-1}}\\
      x\app{\sigma^{i}}& \text{if }x\in \Gamma^i \setminus v^i
    \end{cases}
\end{align*}
\end{definition}

\begin{restatable}{proposition}{iteratedpadding}\label{prop:iteratedpadding}
  For any filtration \(\mathbf{\Gamma}\) and type family \(B\) adapted to it,
  the family \(\mathbf{\Gamma}_{/B}\) is a filtration. Moreover, a type family
  \(C\) is adapted to \(\mathbf{\Gamma}\) if and only if it is adapted to
  \(\mathbf{\Gamma}_{/B}\).
\end{restatable}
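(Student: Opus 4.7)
The plan is to split the proposition into its two conjuncts and dispatch each by carefully unpacking the definitions of filtration and adaptation, leaning on the hypothesis that $B$ is adapted to $\mathbf{\Gamma}$ together with the standard \catt principle that the support of a type is dimensionally bounded by its own dimension.

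For the first conjunct, that $\mathbf{\Gamma}_{/B}$ is a filtration, I would argue level by level. The chosen variable $v^i$ is locally maximal in $\Gamma^i$, since the filtration axioms implicitly require $\Gamma^i \uparrow v^i$ to be well-defined and its up-closedness condition on the singleton $\{v^i\}$ reduces exactly to local maximality; hence $\Gamma^i \setminus v^i$ is well-formed. Because $B^i$ has dimension $i-1$ while $v^i$ has dimension $i$, the variable $v^i$ cannot appear in $\supp(B^i)$, so $B^i$ remains typeable in $\Gamma^i \setminus v^i$ and extending by $w^i : B^i$ yields a valid context of dimension exactly $i$. I would then check that $\sigma^i_{/B}$ is a well-typed substitution into $\Gamma^{i-1}_{/B} \uparrow w^{i-1}$: on background variables inherited from $\Gamma^{i-1} \setminus v^{i-1}$ it agrees with $\sigma^i$ and therefore inherits its typing, while on the fresh generator $\fun{w^{i-1}}$ it sends $w^i$, whose declared type $B^i$ matches the functorialised type of $w^{i-1}$ precisely because $B$ is adapted to $\mathbf{\Gamma}$. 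The equation $\fun{w^i}\app{\sigma^{i+1}_{/B}} = w^{i+1}$ is then immediate from the case split in the definition of $\sigma^{i+1}_{/B}$.

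For the second conjunct, let $C = (C^i)$ be an arbitrary type family. The base clause of adaptation reduces in both cases to the equality $C^m = A^m$: in $\mathbf{\Gamma}$ this is the typing of $v^m$, while in $\mathbf{\Gamma}_{/B}$ it is the typing of $w^m$, whose type is $B^m$, and $B^m = A^m$ since $B$ itself is adapted. For the inductive clause, both conditions demand that $C^{i+1}$ factor as $s^i \to_{D^i} t^i$ with base type $D^i$ equal to $C^i\app{\sigma^{i+1}}$, respectively $C^i\app{\sigma^{i+1}_{/B}}$. The heart of the argument is that these two base types coincide, because the substitutions differ only on variables outside $\Gamma^i \setminus v^i$, namely $\fun{v^i}$ and $(v^i)^{\pm}$ on one side and $\fun{w^i}$ and $(w^i)^{\pm}$ on the other, all of which have dimension at least $i$, while $C^i$ has dimension $i-1$ and so cannot mention any of them in its support. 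The two adaptation conditions therefore collapse onto a single equation on $C^{i+1}$, giving the biconditional.

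The main obstacle is isolated in Part 1: identifying the declared type $B^i$ of $w^i$ with the functorialised type predicted by the target $\Gamma^{i-1}_{/B} \uparrow w^{i-1}$ of $\sigma^i_{/B}$. This matching is exactly what the hypothesis that $B$ is adapted to $\mathbf{\Gamma}$ delivers; once it is recognised, the rest of the argument is a routine structural check resting on the dimension bound on supports in \catt, which underwrites the repeated claim that low-dimensional types cannot contain high-dimensional variables.
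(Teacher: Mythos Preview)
Your proposal is correct and follows essentially the same approach as the paper's proof: for the filtration claim you use adaptation of $B$ to match the declared type $B^i$ of $w^i$ with the functorialised type coming from $\Gamma^{i-1}_{/B}\uparrow w^{i-1}$, and for the equivalence of adaptation you reduce both base clauses to $C^m = A^m = B^m$ and observe that $\sigma^{i+1}$ and $\sigma^{i+1}_{/B}$ agree on the common subcontext $\Gamma^i\setminus v^i$, which contains $\supp(C^i)$ by the dimension bound. Your write-up is in fact more explicit than the paper's, which compresses the second half into a single sentence.
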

\begin{proof}
  We first show that $\sigma^i_{/B}$ is well-typed. Denoting
  $B^i = s^{i-1} \to_{B^{i-1}[\sigma^i]} t^{i-1}$, we necessarily have the following, showing that $\sigma^i_{/B}$ is well-typed, and thus that
  $\mathbf{\Gamma}_{/B}$ is a filtration:
  \begin{align*}
    (w^{i-1})^- \app{\sigma^{i}} &= s^{i-1}
    & (w^{i-1})^+\app{\sigma^{i}}&= t^{i-1}
  \end{align*}
  For the second part, note that any type family $C = (C^i)_{i=m}^n$ adapted to
  either $\mathbf{\Gamma}$ or $\mathbf{\Gamma}_{/B}$ must satisfy:
  \[
    \Delta^i \vdash C^i
  \]
  Since $A^m=B^m$, and since \(\sigma^{i}\) and \(\sigma^{i}_{/B}\) coincide on
  \(\Delta^{i-1}\), $C$ is adapted to $\mathbf{\Gamma}$ if and only if it is
  adapted to $\mathbf{\Gamma}_{/B}$.
\end{proof}

\begin{restatable}{proposition}{paddingcompose}\label{prop:padding-compose}
  Given a filtration \(\mathbf{\Gamma}\) with two types families \(B\) and \(C\)
  adapted to it. Suppose we have padding data \(\mathbf{p} = (p^i_-,p^i_+)_{i=m}^{n-1}\) for \(B\) adapted to
  \(\mathbf{\Gamma}\) and padding data \(\mathbf{q}=(q^i_-,q^i_+)_{i=m}^{n-1}\) for \(C\) adapted to
  \(\mathbf{\Gamma}_{/B}\). Then there exists padding data
  \(\mathbf{q}\square\mathbf{p} = (q^i_-\boxminus p^i_-, p^i_+ \boxplus q^i_+)_{i=m}^{n-1}\) for \(C\) adapted to \(\mathbf{\Gamma}\) and
  equivalences:
  \[
    \Gamma^{i}\vdash \mu^{i}_{\mathbf{q},\mathbf{p}} :
    \padded_{\mathbf{q}}^{i}\app{\padded_{\mathbf{p}}^{i}} \to \padded^{i}_{\mathbf{q}\square\mathbf{p}}
  \]
\end{restatable}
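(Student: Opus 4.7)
The plan is to define the padding data components $(q^i_-\boxminus p^i_-)$, $(p^i_+\boxplus q^i_+)$ and the equivalences $\mu^i_{\mathbf{q},\mathbf{p}}$ simultaneously by induction on $i\ge m$. For the base case $i=m$, one has $\padded^m_\mathbf{p}=v^m$ and $\padded^m_\mathbf{q}=w^m$, so substituting $w^m\mapsto \padded^m_\mathbf{p}$ gives $v^m$. We set $\padded^m_{\mathbf{q}\square\mathbf{p}}:=v^m$ and $\mu^m:=\id_{v^m}$, which is trivially an equivalence.

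For the inductive step, we expand both sides using the defining equation (\ref{definingeq}). Because substitution distributes over composition, we obtain
\[
  \padded^i_\mathbf{q}\app{\padded^i_\mathbf{p}}
  \;=\; q^{i-1}_-\app{\rho} \s_{i-1}
    \bigl((\padded^{i-1}_\mathbf{q}\uparrow w^{i-1})\app{\sigma^i_{/B}}\bigr)\app{\rho}
    \s_{i-1} q^{i-1}_+\app{\rho},
\]
where $\rho$ is the substitution sending $w^i\mapsto \padded^i_\mathbf{p}$ and fixing every other variable. The middle factor is the functorialisation $(\padded^{i-1}_\mathbf{q}\uparrow w^{i-1})$ applied with $\fun{w^{i-1}}$ sent to $\padded^i_\mathbf{p}$, and $\padded^i_\mathbf{p}$ itself splits via (\ref{definingeq}) as a three-fold composite at dimension $i-1$ of $p^{i-1}_-$, a middle functorialisation of $\padded^{i-1}_\mathbf{p}$, and $p^{i-1}_+$. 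Applying a pseudofunctoriality argument of the same flavour as Lemma~\ref{lemma:xi-generalised}, the middle factor is equivalent to a composite of three cells, each obtained by applying the functorialisation with $\fun{w^{i-1}}$ sent to one of the three pieces; by naturality of functorialisation the central subfactor is itself a functorialisation of $\padded^{i-1}_\mathbf{q}\app{\padded^{i-1}_\mathbf{p}}$, into which the whiskering of the inductive equivalence $\mu^{i-1}\uparrow v^{i-1}$ can be inserted.

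We then define $(q^{i-1}_-\boxminus p^{i-1}_-)$ to be the composite of $q^{i-1}_-\app{\rho}$ with the pseudofunctorial cell arising from $p^{i-1}_-$, and symmetrically $(p^{i-1}_+\boxplus q^{i-1}_+)$ on the right. With these choices, the term $\padded^i_{\mathbf{q}\square\mathbf{p}}$ defined through (\ref{definingeq}) is exactly the target of the required equivalence, and $\mu^i_{\mathbf{q},\mathbf{p}}$ is assembled as the composite of the pseudofunctorial witness, the whiskering of $\mu^{i-1}\uparrow v^{i-1}$ into the middle factor, and associators reconciling the parenthesizations of the two sides. Each ingredient is an equivalence---the pseudofunctorial cell being a coherence, $\mu^{i-1}$ being so by induction, and any incidental higher-dimensional fillers by Lemma~\ref{diminverse}---so $\mu^i$ is too. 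The main technical obstacle is formulating and verifying this pseudofunctoriality for a general functorialisation $(\padded^{i-1}_\mathbf{q}\uparrow w^{i-1})$ applied at a three-fold composite: Lemma~\ref{lemma:xi-generalised} treats the special case of the unbiased padding, whereas here we need the analogue for arbitrary padding data, carried out in such a way that the recursive witness $\mu^{i-1}\uparrow v^{i-1}$ fits into the central factor after naturality has been invoked.
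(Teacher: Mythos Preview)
Your overall architecture matches the paper's proof exactly: a mutual induction defining the composed padding data and the equivalences $\mu^i$, with $\mu^m = \id_{v^m}$, and an inductive step that (i) unfolds $\padded^i_\mathbf{q}\app{\padded^i_\mathbf{p}}$, (ii) applies a ternary pseudofunctoriality of $(\padded^{i-1}_\mathbf{q}\uparrow w^{i-1})$ to split the middle factor, (iii) uses the naturality $\mu^{i-1}\uparrow v^{i-1}$ on the central subfactor, and (iv) reassociates. Two points need correcting.

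First, you have misread Lemma~\ref{lemma:xi-generalised}: it is already stated for \emph{arbitrary} padding data $\mathbf{p}$ over an arbitrary filtration, not just the unbiased padding (that is Proposition~\ref{construction:Xi}). The only genuine extension needed here is from the binary statement to a ternary one, and the paper obtains this in the obvious way, by two applications of the binary lemma interleaved with associators. So the ``main technical obstacle'' you flag is not an obstacle.

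Second, your description of the composed padding data is incomplete. You set $(q^{i-1}_-\boxminus p^{i-1}_-)$ to be $q^{i-1}_-$ composed with $(\padded^{i-1}_\mathbf{q}\uparrow w^{i-1})\app{p^{i-1}_-}$, but this omits a necessary third factor. The naturality cell $\mu^{i-1}\uparrow v^{i-1}$ has type
\[
  \mu^{i-1}\app{\inj^-}\s_{i-1}(\padded^{i-1}_{\mathbf{q}\square\mathbf{p}}\uparrow v^{i-1})
  \;\to\;
  (\padded^{i-1}_\mathbf{q}\app{\padded^{i-1}_\mathbf{p}}\uparrow v^{i-1})\s_{i-1}\mu^{i-1}\app{\inj^+},
\]
so using it to rewrite the central subfactor necessarily deposits $\mu^{i-1}\app{\inj^-}$ on the left and $(\mu^{i-1}\app{\inj^+})^{-1}$ on the right. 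For the target to equal $\padded^i_{\mathbf{q}\square\mathbf{p}}$ on the nose, these residues must be absorbed into the padding data. The paper accordingly sets
\[
  q^{i-1}_-\boxminus p^{i-1}_- := q^{i-1}_- \s_{i-1} (\padded^{i-1}_\mathbf{q}\uparrow w^{i-1})\app{p^{i-1}_-} \s_{i-1} \mu^{i-1}\app{\inj^-\cir\sigma^{i}},
\]
and dually for $p^{i-1}_+\boxplus q^{i-1}_+$, with the inverse $(\mu^{i-1}\app{\inj^+})^{-1}$ appearing there. Without this, your $\mu^i$ will not land in $\padded^i_{\mathbf{q}\square\mathbf{p}}$ as defined by~(\ref{definingeq}).
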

\begin{proof}
  Throughout this proof, we suppose that the filtration is given by
  \(\mathbf{\Gamma} = (\Gamma^{i},v^{i},\sigma^{i})\) and we write:
  \begin{align*}
    A^i &= a^{i-1}_- \to a^{i-1}_+ &  B^i &= b^{i-1}_- \to b^{i-1}_+ &  C^i &= c^{i-1}_- \to c^{i-1}_+
  \end{align*}
  We write $w^i$ for the chosen variable of $\Gamma^i_{/B}$ in the filtration $\mathbf{\Gamma}_{/B}$. We construct my mutual the following:
  \begin{itemize}
  \item Padding data
    $\mathbf{q}\square\mathbf{p} = (q^i_-\boxminus p^i_-,p^i_+\boxplus
    q^i_+)_{i=m}^{n-1}$ for the type $C$ adapted to the filtration
    $\mathbf{\Gamma}$.
  \item Equivalences
    $\Gamma^i : \mu^i_{\mathbf{q},\mathbf{p}} : \padded^i_{q}\app{\padded^i_{p}}
    \to \padded^i_{\mathbf{q}\square\mathbf{p}}$.
  \end{itemize}
  We first define \(q^i_-\boxminus p^i_-\) and \(p^i_+\boxplus q^i_+\)
  in terms of \(\mu^{i}_{\mathbf{p},\mathbf{q}}\):
  \begin{align*}
    q^{i}_{-}\boxminus p^i- &:= q^i_- \s_i (\padded^i_{\mathbf{q}} \uparrow w^i)\app{p^i_-} \s_i \mu^i_{\mathbf{q},\mathbf{p}}\app{\inj^-\cir\sigma^{i+1}}\\
    p^{i}_{+}\boxplus q^i_+ &:= (\mu^i_{\mathbf{q},\mathbf{p}}\app{\inj^+\cir\sigma^{i+1}})^{-1} \s_i (\padded^i_\mathbf{q}\uparrow w^i)\app{p^i_+} \s_i q^i_+
  \end{align*}
  Thus we are left with defining the equivalences
  \(\mu^{i}_{\mathbf{p},\mathbf{q}}\). When $i=m$, it suffices to chose $\mu^m_{\mathbf{q},\mathbf{p}}:=\id_{v^{m}}$.
  Now, supposing that \(\mu^{i}_{\mathbf{q},\mathbf{p}}\) has been constructed,
  we construct \(\mu^{i+1}_{\mathbf{q},\mathbf{p}}\) in three main steps. The
  source of \(\mu^{i+1}_{\mathbf{q},\mathbf{p}}\) is the following, writing $*$ for $*_i$:
  \[
    \padded^{i+1}_\mathbf{q}\app{\padded^{i+1}_\mathbf{p}} = q^i_-\! \s
    (\padded_{\mathbf{q}} \uparrow w^{i})\app{p^i_-\!\! \s\! (\padded_\mathbf{p}
      \uparrow v^{i}) \app{\sigma^{i+1}} \!\s p^i_+} \s q^i_+
  \]
  Our first step consist of a ternary variation of the pseudo-functoriality
  witness \(\Xi^{i\uparrow 1}_{\mathbf{q}}\) from
  Lemma~\ref{lemma:xi-generalised}. We can define this ternary
  variation \(X\) using associators and the binary one as follows:
  \[
    \begin{tikzcd}
      (\padded_{\mathbf{q}} \uparrow w^{i})\app{p^i_- \s (\padded_\mathbf{p}
        \uparrow v^{i}) \app{\sigma^{i+1}} \s p^i_+}
      \ar[d,"\text{\small (associator)}"]\\
      (\padded_{\mathbf{q}} \uparrow w^{i})\app{(p^i_- \s (\padded_\mathbf{p}
        \uparrow v^{i}) \app{\sigma^{i+1}}) \s p^i_+} \ar[d,"\Xi^{i\uparrow 1}_{\mathbf{q}}"]\\
      (\padded_{\mathbf{q}} \uparrow w^{i}) \app{p^i_- \s (\padded_\mathbf{p}
        \uparrow v^{i}) \app{\sigma^{i+1}}} \s (\padded_{\mathbf{q}} \uparrow
      w^{i})\app{p^i_+}
      \ar[d, "\Xi^{i\uparrow 1}_{\mathbf{q}}"]\\
      ((\padded_{\mathbf{q}}\! \uparrow\! w^{i})\app{p^i_-}\!\s\! (\padded_{\mathbf{q}}
      \!\uparrow\! w^{i})\app{ (\padded_{\mathbf{p}} \!\uparrow\! v^{i})
        \app{\sigma^{i+1}}})\! \s\! (\padded_{\mathbf{q}} \!\uparrow\! w^{i})\app{p^i_+}
      \ar[d,"\text{\small (associator)}"]\\
      (\padded_{\mathbf{q}} \uparrow w^{i})\app{p^i_-}\!\s\! (\padded_{\mathbf{q}}
      \!\uparrow\! w^{i})\app{ (\padded_\mathbf{p} \!\uparrow\! v^{i})
        \app{\sigma^{i+1}}}\! \s\! (\padded_{\mathbf{q}} \!\uparrow\! w^{i})\app{p^i_+}
    \end{tikzcd}
  \]
  By Lemma~\ref{funcsubs}, the target of this cell is equal to:
  \[
    (\padded_{\mathbf{q}} \uparrow w^{i})\app{p^i_-}\s
    (\padded_{\mathbf{q}}\app{\padded_{\mathbf{p}}} \uparrow w^{i})
      \app{\sigma^{i+1}} \s (\padded_{\mathbf{q}} \uparrow w^{i})\app{p^i_+}
  \]
  To proceed further, we use the term
  \(\mu^{i}_{\mathbf{q},\mathbf{p}} \uparrow v^{i}\), which in context
  \(\Gamma^{i}\uparrow v^{i}\) has type:
  \[
    \mu^{i}_{\mathbf{q},\mathbf{p}}\app{\inj^-} \s_{i} (\padded^{i}_{\mathbf{q}\square\mathbf{p}}\uparrow v^{i}) \to
    (\padded^{i}_{\mathbf{q}}\app{\padded^{i}_{\mathbf{p}}}\uparrow v^{i}) \s_{i} \mu^{i}_{\mathbf{q},\mathbf{p}}\app{\inj^+}
  \]
  We now construct a new cell denoted
  \(Y\), defined as a composite of \(5\) steps:
  \[
    \begin{tikzcd}
      \padded^{i}_{\mathbf{q}}\app{\padded^{i}_{\mathbf{p}} }\uparrow v^{i}
      \ar[d,"\text{\small (unitor)}"]\\
      (\padded^{i}_{\mathbf{q}}\app{\padded^{i}_{\mathbf{p}}}\uparrow v^{i})
      \s_{i}
      \id(\padded^{i}_{\mathbf{q}}\app{\padded^{i}_{\mathbf{p}}}\app{\inj^+})
      \ar[d,"\text{\small ((whiskering of
        $\scriptstyle{\varepsilon_{\mu^i_{\mathbf{q},\mathbf{p}}\app{
              \inj^+}}^{-1}}$))}"]
      \\
      (\padded^{i}_{\mathbf{q}}\app{\padded^{i}_{\mathbf{p}}}\uparrow v^{i})
      \s_{i}
      (\mu^i_{\mathbf{q},\mathbf{p}}\app{\inj^+  } \s_i
      (\mu^i_{\mathbf{q},\mathbf{p}}\app{\inj^+})^{-1})
      \ar[d,"\text{\small (associator)}"]\\
      ((\padded^{i}_{\mathbf{q}}\app{\padded^{i}_{\mathbf{p}}}\uparrow v^{i})
      \s_{i} \mu^i_{\mathbf{q},\mathbf{p}}\app{\inj^+}) \s_i
      (\mu^i_{\mathbf{q},\mathbf{p}}\app{\inj^+})^{-1}
      \ar[d,"\text{\small (whiskering of
        $\scriptstyle{(\mu^i_{\mathbf{q},\mathbf{p}} \uparrow v^i)^{-1}}$)}"]
      \\
       (\mu^{i}_{\mathbf{q},\mathbf{p}}\app{\inj^-} \s_{i}
       (\padded^{i}_{\mathbf{q}\square\mathbf{p}}\uparrow  v^{i})) \s_{i}
       (\mu^{i}_{\mathbf{q},\mathbf{p}}\app{\inj^+})^{-1}
       \ar[d,"\text{\small (associator)}"]\\
       \mu^{i}_{\mathbf{q},\mathbf{p}}\app{\inj^-} \s_{i}
       (\padded^{i}_{\mathbf{q}\square\mathbf{p}}\uparrow v^{i}) \s_{i}
       (\mu^{i}_{\mathbf{q},\mathbf{p}}\app{\inj^+})^{-1}
    \end{tikzcd}
  \]
  By Lemma~\ref{invsubs}, the target of \(Y\) rewrites as:
  \[
    \mu^{i}_{\mathbf{q},\mathbf{p}}\app{\inj^-\cir \sigma^{i+1}} \s_{i}
    (\padded^{i}_{\mathbf{q}\square\mathbf{p}}\uparrow v^{i}) \s_{i}
    (\mu^{i}_{\mathbf{q},\mathbf{p}}\app{\inj^+\cir\sigma^{i+1}})^{-1}
  \]
  This allows us to define the cell \(\mu^{i+1}_{\mathbf{q},\mathbf{p}}\) as a
  ternary composite as follows:
  \begin{gather*}
    \begin{aligned}
      \Tilde{p_{-}} &= (\padded_{\mathbf{q}} \uparrow w^{i})\app{p^i_-}
      &  \Tilde{p_{+}} &= (\padded_{\mathbf{q}} \uparrow w^{i})\app{p^i_+}
    \end{aligned}\\
    \mu^{i+1}_{\mathbf{q},\mathbf{p}}
    := \left(
      q^{i}_{-} \s (
      X \s_{i+1} (
      \Tilde{p_{-}}\s
      Y
      \s
      \Tilde{p_{+}}
      )) \s q^{i}_{+}
    \right) \s_{i+1} \text{(associator)}
  \end{gather*}
  Here the associator is determined by the type:
  \[
    \begin{tikzcd}
      f_{1} \s (f_{2} \s (f_{3} \s f_{4} \s f_{5}) \s f_{6} ) \s f_{7}
      \ar[d]\\
      (f_{1}\s f_{2} \s f_{3}) \s f_{4} \s (f_{5} \s f_{6} \s f_{7})
    \end{tikzcd}
  \]
This construction is illustrated in Figure~\ref{stacking}.
\end{proof}

\begin{figure}[t]
    \[
    \begin{array}{ccc}
      \begin{tikzcd}[ampersand replacement=\&,row sep=huge]
        {a^0_-} \\
        {b^0_-} \\
        {c^0_-}
        \arrow["{p^0_-}", from=2-1, to=1-1]
        \arrow["{q^0_-}", from=3-1, to=2-1]
\end{tikzcd}   & \begin{tikzcd}[ampersand replacement=\&,row sep=huge]
        {a^0_-} \& {a^0+} \\
        {b^0_-} \& {b^0_+} \\
        {c^0_-} \& {c^0_+}
        \arrow[""{name=0, anchor=center, inner sep=0, pos=.56}, "{a^1_-}", from=1-1, to=1-2]
        \arrow["{p^1_+}", from=1-2, to=2-2]
        \arrow["{p^0_-}", from=2-1, to=1-1]
        \arrow[""{name=1, anchor=center, inner sep=0}, "{b^1_-}"{description}, from=2-1, to=2-2]
        \arrow["{q^0_+}", from=2-2, to=3-2]
        \arrow["{q^0_-}", from=3-1, to=2-1]
        \arrow[""{name=2, anchor=center, inner sep=0}, "{c^1_-}"', from=3-1, to=3-2]
        \arrow["{p^1_-}"', shorten <=6pt, shorten >=4pt, Rightarrow, from=1, to=0]
        \arrow["{q^1_-}"', shorten <=4pt, shorten >=4pt, Rightarrow, from=2, to=1]
\end{tikzcd} & \begin{tikzcd}[ampersand replacement=\&,row sep=huge]
        {a^0_-} \&\& {a^0+} \\
        {b^0_-} \&\& {b^0_+} \\
        {c^0_-} \&\& {c^0_+}
        \arrow[""{name=0, anchor=center, inner sep=0}, "{a^1_+}", curve = {height = -18pt}, from=1-1, to=1-3]
        \arrow["{p^1_+}", from=1-3, to=2-3]
        \arrow["{p^0_-}", from=2-1, to=1-1]
        \arrow[""{name=3, anchor=center, inner sep=0}, "{b^1_+}"{description}, curve = {height = -18pt}, from=2-1, to=2-3]
        \arrow["{q^0_+}", from=2-3, to=3-3]
        \arrow["{q^0_-}", from=3-1, to=2-1]
        \arrow[""{name=4, anchor=center, inner sep=0}, "{c^1_-}"', curve = {height = 18pt}, from=3-1, to=3-3]
        \arrow[""{name=5, anchor=center, inner sep=0}, "{c^1_+}"{description}, curve = {height = -18pt}, from=3-1, to=3-3]
        \arrow["{q^1_+}"{description, pos=0.73}, shift left=5, curve={height=-6pt}, shorten <=5pt, shorten >=2pt, Rightarrow, from=3, to=5]
        \arrow["{c^2_-}"', shorten <=3pt, shorten >=3pt, Rightarrow, from=4, to=5]
        \arrow["{p^1_+}"{description, pos=0.73}, shift left=5, curve={height=-6pt}, shorten <=5pt, shorten >=2pt, Rightarrow, from=0, to=3]
        \arrow[""{name=1, anchor=center, inner sep=0}, "{a^1_-}"{description}, curve = {height =18pt}, from=1-1, to=1-3, crossing over]
        \arrow["{a^2_-}"', shorten <=5pt, shorten >=3pt, Rightarrow, from=1, to=0]
        \arrow[""{name=2, anchor=center, inner sep=0}, "{b^1_-}"{description}, curve = {height = 18pt}, from=2-1, to=2-3, crossing over]
        \arrow["{p^1_-}"{description, pos=0.73}, shift left=5, curve={height=-6pt}, shorten <=2pt, shorten >=2pt, Rightarrow, from=2, to=1]
        \arrow["{q^1_-}"{description, pos=.73}, shift left=5, curve={height=-6pt}, shorten <=2pt, shorten >=2pt, Rightarrow, from=4, to=2]
        \arrow["{b^2_-}"', shorten <=5pt, shorten >=5pt, Rightarrow, from=2, to=3]
\end{tikzcd} \\
        q^0_-\boxminus p^0_- & q^1_- \boxminus p^1_-& \text{$q^2_- \boxminus p^2_-$ ($q^2_-$ and $p^2_-$ fill the centre of the ``cylinders")}
    \end{array}
    \]
    \caption{\centering The constructions of $q^i_-\boxminus p^i_-$, for $i = 0,1,2$, where
      the filtration has height $0$.}
    \label{stacking}
    \end{figure}

\section{Construction of Eckmann-Hilton Cells}\label{fullconstruction}
\noindent
We now construct the cells $\eh^n_{k,l}$ for \(0\leq k,l < n\) with \(k\neq l\),
using our theory of padding. We first present the construction of
$\eh^n_{n-1,0}$ and $\eh^n_{0,n-1}$, then describe how suspension allows the construction of $\eh^{n+1}_{k+1,l+1}$ from \(\eh^{n}_{k,l}\), while naturality allows the construction of
$\eh^{n+1}_{k,l}$ from \(\eh^{n}_{k,l}\). This covers all cases. To simplify the notation, we introduce
the contexts for these cells and types of these cells:
\begin{align*}
  \ehctx^n & := (x : \obj, a,b : \id^{n-1}_x \to \id^{n-1}_x) \\
  \ehty^n_{k,l} &:= a \s_k b \to \padded^n_{k,l}\app{a \s_l b}
\end{align*}
One can check that this type is valid in context
\(\ehctx^n\). Our main result, Theorem~\ref{thm:half-eh}, gives the construction of cells $\eh^n_{k,l}$ such that the following judgement is derivable:
\begin{equation}\label{eq:eh-goal}
    \ehctx^n \vdash \eh^n_{k,l} : \ehty^n_{k,l}
\end{equation}
We begin with the base cases $\eh^n_{n-1,0}$ and $\eh^n_{0,n-1}$.

\begin{lemma}\label{lemma:eh-half-base-valid}
  For every \(n\geq 2\), we can construct
  cells $\eh^n_{n-1,0}$ and $\eh^n_{0,n-1}$ satisfying:
  \begin{align*}
      \ehctx^n &\vdash \eh^n_{n-1,0}: \ehty^n_{n-1,0}
      & \ehctx^n &\vdash \eh^n_{0,n-1}: \ehty^n_{0,n-1}
  \end{align*}
\end{lemma}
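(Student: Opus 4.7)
The plan is to construct $\eh^{n}_{n-1,0}$ exactly as the four-step composite $X_1 *_n X_2 *_n X_3 *_n X_4$ outlined in the introduction and depicted in Figure~\ref{2dEH}, then obtain $\eh^{n}_{0,n-1}$ by a dual construction using the tilde variants $\tilde\rho^n, \tilde\lambda^n$. Each $X_i$ is an $(n{+}1)$\-/cell, so the composition $*_n$ is well-defined provided the intermediate $n$\-/cell boundaries line up, which is where most of the bookkeeping goes.

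First, I would construct $X_1 := (\rho^{n}_{a})^{-1} *_{n-1} (\lambda^{n}_{b})^{-1}$ using the generalised unitors from Def.~\ref{def:biased-padding} specialised at the variables $a$ and $b$. Here I use that $a, b$ are cells whose $0$\-/dimensional boundary is $x$ with $\id^{n-1}_x$-boundaries otherwise, so $\rho^n$ and $\lambda^n$ apply. This yields $X_1 : a *_{n-1} b \to \padded^{n}_{\rho}(a *_0 \id^{n}_x) *_{n-1} \padded^{n}_{\lambda}(\id^{n}_x *_0 b)$. Next, $X_2$ is built from the right- and left-unbiasing repaddings of Def.~\ref{def:biased-repadding} applied pointwise to the two factors:
\[
X_2 := \repad^{n}_{\rho \to u}(a *_0 \id^{n}_x) *_{n-1} \repad^{n}_{\lambda \to u}(\id^{n}_x *_0 b),
\]
which gives a cell into $\padded^{n}_{n-1,0}(a *_0 \id^{n}_x) *_{n-1} \padded^{n}_{n-1,0}(\id^{n}_x *_0 b)$. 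Both factors carry the same unbiased padding, so $X_3 := \Xi^{n}_{n-1,0}(a *_0 \id^{n}_x,\, \id^{n}_x *_0 b)$ from Proposition~\ref{construction:Xi} applies and produces a cell into $\padded^{n}_{n-1,0}\!\left((a *_0 \id^{n}_x) *_{n-1} (\id^{n}_x *_0 b)\right)$. Finally, $X_4$ is obtained by applying the functorialisation of the unbiased padding to the standard interchanger $\zeta^{n}: (a *_0 \id^{n}_x) *_{n-1} (\id^{n}_x *_0 b) \to a *_0 b$; explicitly, $X_4 := (\padded^{n}_{n-1,0} \uparrow v^{n}_0)\app{\zeta^{n}}$, landing in $\padded^{n}_{n-1,0}(a *_0 b)$. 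The composite $X_1 *_n X_2 *_n X_3 *_n X_4$ then has the required type $\ehty^{n}_{n-1,0}$.

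For $\eh^{n}_{0,n-1}$, I would run the analogous four steps but with the roles of the composition directions swapped. Here $a *_{0} b$ is the source and $\padded^{n}_{0,n-1}(a *_{n-1} b)$ is the target. Step $X_1$ uses the tilde unitors $\tilde\rho^{n}_a$ and $\tilde\lambda^{n}_b$, whose biased paddings $\padded^{n}_{\tilde\rho}$ and $\padded^{n}_{\tilde\lambda}$ are precisely those adapted to the filtration $\mathbf{\Gamma}^{n}_{\tilde\rho}$ needed here; step $X_2$ uses $\repad^{n}_{\tilde\rho \to u}$ and $\repad^{n}_{\tilde\lambda \to u}$; step $X_3$ uses $\Xi^{n}_{0,n-1}$; step $X_4$ whiskers the dual interchanger through the functorialisation of $\padded^{n}_{0,n-1}$ with respect to $v^{n}_{n-1}$.

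The main obstacle is not any single step but the careful verification that all boundary types align across the seven matching interfaces in the two composites. In particular, $X_3$ requires that both cells being composed in dimension $n-1$ carry the \emph{same} unbiased padding applied to an $I^{n-1}_{n-1}$-cell, which is exactly what the repadding $X_2$ delivers (this is why the asymmetric biased paddings are needed as an intermediate stage rather than going directly to the unbiased padding). A subsidiary check is that the functorialisation in $X_4$ is applicable, i.e.\ that $\depth_{\{v^{n}_0\}}(\padded^{n}_{n-1,0}) \le 1$, which follows inductively from the defining formula (\ref{definingeq}) since the $p^{i}, q^{i}$ have empty support intersecting $\{v^{n}_0\}$ and each layer adds exactly one whiskering around the next layer.
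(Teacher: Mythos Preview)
Your construction of $\eh^n_{n-1,0}$ is correct and matches the paper exactly: the four-step composite $X_1 \s_n X_2 \s_n X_3 \s_n X_4$ using generalised unitors, unbiasing repaddings, the pseudofunctoriality cell $\Xi^n_{n-1,0}$, and a functorialised interchanger is precisely the paper's proof.

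For $\eh^n_{0,n-1}$, however, your proposed ordering does not typecheck. You place the tilde unitors in step $X_1$, starting from $a \s_0 b$, and the interchanger in step $X_4$. But by Definition~\ref{def:biased-padding} the cell $(\tilde\rho^n)^{-1}\app{a}$ has source $a \s_0 \id^n_x$, and $(\tilde\lambda^n)^{-1}\app{b}$ has source $\id^n_x \s_0 b$; neither is $a \s_0 b$. These sources only appear after one first applies the inverse interchanger $(\zeta^n)^{-1} : a \s_0 b \to (a \s_0 \id^n_x) \s_{n-1} (\id^n_x \s_0 b)$. Hence the interchanger must be the \emph{first} step, as a bare inverse rather than functorialised through the padding, and $\Xi^n_{0,n-1}\app{a,b}$ is the \emph{last} step, already landing in $\padded^n_{0,n-1}\app{a \s_{n-1} b}$ with no further interchanger required. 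The paper's composite is
\[
(\zeta^n)^{-1} \;\s_n\; \bigl((\tilde\rho^n)^{-1}\app{a} \s_{n-1} (\tilde\lambda^n)^{-1}\app{b}\bigr) \;\s_n\; \bigl(\repad^n_{\tilde\rho\to u}\app{a} \s_{n-1} \repad^n_{\tilde\lambda\to u}\app{b}\bigr) \;\s_n\; \Xi^n_{0,n-1}\app{a,b}.
\]
So the correct dualisation reverses the position of the interchanger relative to the other three steps, rather than keeping the same order with swapped labels. You have all the right ingredients; only the assembly order for the second cell needs this correction.
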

\begin{proof}
  The construction of \(\eh^{n}_{n-1,0}\) follows exactly the structure in the
  steps $X_1,X_2,X_3,X_4$ shown in Figure~\ref{2dEH}. We recall the generalised biased unitors (Def.~\ref{def:biased-padding}),
  unbiasing repaddings (Def.~\ref{def:biased-repadding}), and
  pseudofunctoriality of the unbiased padding (Proposition~\ref{construction:Xi}):
  \begin{align*}
    \disc^n &\vdash \rho^n : \padded^n_{\rho}\app{d^n \s_0 \id^n_{d^0_+}} \to
              d^n  \\
    \disc^n &\vdash \lambda^n : \padded^n_{\lambda}\app{ \id^n_{d^0_-} \s_{0} d^n} \to d^n  \\
    \Gamma^n_0 &\vdash \repad^n_{\rho \to u} : \padded^n_{\rho}\app{v^n_0} \to \padded^n_{n-1,0} \\
    \Gamma^n_0 &\vdash \repad^n_{\lambda \to u} : \padded^n_{\lambda}\app{v^n_0} \to \padded^n_{n-1,0} \\
    (\Gamma^n_0,w : I^{n-1}_0) &\vdash \Xi^n_{n-1,0} : \padded^n_{n-1,0}\app{v^n_0} *_{n-1} \padded^n_{n-1,0}\app{w}
    \\ & \hspace{2.7cm}\to \padded^n_{n-1,0}\app{v^n_0 *_{n-1} w}
  \end{align*}
  
  The remaining ingredient is the final interchange step (corresponding to $X_4$ in Figure~\ref{2dEH}). We define a family of pasting context $\mathbb{Z}^{N}$ for \(N\geq 2\), as the $0$\=/gluings of two $N$\=/discs. The
  contexts \(\mathbb{Z}^{2}\) and \(\mathbb{Z}^{3}\) are illustrated in
  Figure~\ref{fig:ps-interchanger-zeta}, and the general formula for
  \(\mathbb{Z}^{N}\) is:
  \[\left(\begin{array}{ll}
    d^0_L, d^0_0 : \obj, \
    d^1_{L-}, d^1_{L+} : d^0_L \to d^0_0, \\
    d^i_{L-}, d^i_{L+} : d^{i-1}_{L-} \to d^{i-1}_{L+} &\text{for $1<i<N$} \\
     d^N_L : d^{N-1}_{L-} \to d^{N-1}_{L+} \\
      d^0_R : \obj,\
     d^1_{R-}, d^1_{R+} : d^0_0 \to d^0_R, \\
    d^i_{R-}, d^i_{R+} : d^{i-1}_{R-} \to d^{i-1}_{R+} & \text{for $1<i<N$} \\
    d^N_R : d^{n-1}_{R-} \to d^{N-1}_{R+})
\end{array}\right)\]
We then define:
\begin{align*}
  \zeta^{N} := \coh(\mathbb{Z}^{N}\! :\! (d^N_L \!\s_0\! \id_{d^{N-1}_{R-}})\! \s_{N-1}\! (\id_{d^{N-1}_{L+}} \!\s_0
  d^N_R)\! \to\! d^N_L \!\s_0\! d^N_R)[\id_{\mathbb{Z}^N}]
\end{align*}
\begin{figure}
  \centering
  \[\begin{tikzcd}[ column sep = 2cm]
        {d^0_L} & {d^0_0} & {d^0_R}
        \arrow[""{name=0, anchor=center, inner sep=0}, "{d^1_{L-}}", bend left = 45, from=1-1, to=1-2]
        \arrow[""{name=1, anchor=center, inner sep=0}, "{d^1_{L+}}"', bend right = 45, from=1-1, to=1-2]
        \arrow[""{name=2, anchor=center, inner sep=0}, "{d^1_{R-}}", bend left = 45, from=1-2, to=1-3]
        \arrow[""{name=3, anchor=center, inner sep=0}, "{d^1_{R+}}"', bend right = 45, from=1-2, to=1-3]
        \arrow["{d^2_L}", shorten <=3pt, shorten >=3pt, Rightarrow, from=0, to=1]
        \arrow["{d^2_R}", shorten <=3pt, shorten >=3pt, Rightarrow, from=2, to=3]
\end{tikzcd}\]
\[\begin{tikzcd}[column sep = 2cm]
        {d^0_L} & {d^0_0} & {d^0_R}
        \arrow[""{name=0, anchor=center, inner sep=0}, "{d^1_{L-}}", bend left = 80, from=1-1, to=1-2]
        \arrow[""{name=1, anchor=center, inner sep=0}, "{d^1_{L+}}"', bend right= 80, from=1-1, to=1-2]
        \arrow[""{name=2, anchor=center, inner sep=0}, "{d^1_{R-}}", bend left = 80, from=1-2, to=1-3]
        \arrow[""{name=3, anchor=center, inner sep=0}, "{d^1_{L+}}"', bend right = 80, from=1-2, to=1-3]
        \arrow[""{name=4, anchor=center, inner sep=0}, "{d^2_{L-}}"'{pos=0.45}, bend right = 60, shorten <=4pt, shorten >=4pt, Rightarrow, from=0, to=1]
        \arrow[""{name=5, anchor=center, inner sep=0}, "{d^2_{L+}}"{pos=0.45}, bend left = 60, shorten <=4pt, shorten >=4pt, Rightarrow, from=0, to=1]
        \arrow[""{name=6, anchor=center, inner sep=0}, "{d^2_{R-}}"'{pos=0.45}, bend right = 60, shorten <=4pt, shorten >=4pt, Rightarrow, from=2, to=3]
        \arrow[""{name=7, anchor=center, inner sep=0}, "{d^2_{R+}}"{pos=0.45}, bend left = 60, shorten <=4pt, shorten >=4pt, Rightarrow, from=2, to=3]
        \arrow["{d^3_L}", shorten <=2pt, shorten >=2pt, Rightarrow, scaling nfold=3, from=4, to=5]
        \arrow["{d^3_R}", shorten <=2pt, shorten >=2pt, Rightarrow, scaling nfold=3, from=6, to=7]
      \end{tikzcd}
    \]
  \caption{The pasting contexts $\mathbb{Z}^2$ and $\mathbb{Z}^3$}
  \label{fig:ps-interchanger-zeta}
\end{figure}%
We  construct the cell $\eh^n_{n-1,0}$ as the following \(4\)-ary
  composite, using the above ingredients, as described in Figure~\ref{2dEH}:
  \[
    \begin{tikzcd}[row sep=17pt]
      a \s_{n-1} b
      \ar[d, "\scriptstyle{(\rho^n)^{-1}\app{a} \s_{n-1} (\lambda^n)^{-1}\app{b}}"] \\
      \padded^n_{\rho}\app{a \s_0 \id^n_x} \s_{n-1} \padded^n_{\lambda}\app{\id^n_x
        \s_0 b}
      \ar[d, "\scriptstyle{\repad^n_{\rho\to u}\app{a \s_0 \id^n_x} \s_{n-1}
        \repad^n_{\lambda\to u}\app{\id^n_x \s_0 b}}"]\\
      \padded^n_{n-1,0}\app{a \s_0 \id^n_x} \s_{n-1}
      \padded^n_{n-1,0}\app{\id^n_x \s_0 b}
      \ar[d, "\scriptstyle{\Xi^n_{n-1,0}\app{a \s_0 \id^n_x, \id^n_x \s_0
          b}}"]\\
      \padded^n_{n-1,0}\app{(a \s_0 \id^n_x) \s_{n-1} (\id^n_x \s_0 b)}
      \ar[d, "\scriptstyle{(\padded^n_{n-1,0}\uparrow v^n_0)\app{\zeta^n}}"]
      \\
      \padded^n_{n-1,0}\app{a \s_0 b}
    \end{tikzcd}
  \]

  Similarly, for \(\eh^{n}_{0,n-1}\) recall the cells:
  \begin{align*}
    \disc^n &\vdash \tilde\rho^n :  \padded^n_{\tilde\rho}\app{d^n} \to d^n \s_0
              \id^n_{d^0_+} \\
     \disc^n &\vdash \tilde\lambda^n :  \padded^n_{\tilde\lambda}\app{d^n} \to
               \id^n_{d^0_+}\s_{0} d^n \\
     \Gamma^n_0 &\vdash \repad^n_{\tilde\rho \to u} : \padded^n_{\tilde\rho}\app{v^n_{n-1}} \to \padded^n_{0,n-1} \\
     \Gamma^n_0 &\vdash \repad^n_{\tilde\lambda \to u} : \padded^n_{\tilde\lambda}\app{v^n_{n-1}} \to \padded^n_{0,n-1} \\
     (\Gamma^n_{n-1},w : I^{n-1}_{n-1}) &\vdash \Xi^n_{0,n-1} : \\
     \padded^n_{0,n-1}\app{v^n_{n-1}} &*_{n-1} \padded^n_{0,n-1}\app{w} \to \padded^n_{n-1,0}\app{v^n_{n-1} *_{n-1} w}
\end{align*}
We then define $\eh^n_{0,n-1}$ as the following composite:
\begin{align*}
  \begin{tikzcd}[row sep=17pt]
    a \s_0 b
    \ar[d,"\scriptstyle{(\zeta^n)^{-1}}"]\\
    (a \s_0 \id^n_x) \s_{n-1} (\id^n_x \s_0 b)
    \ar[d,"\scriptstyle{(\tilde\rho^n)^{-1}\app{a} \s_{n-1} (\tilde\lambda^n)^{-1}\app{b}}"]\\
    \padded^n_{\tilde\lambda}\app{a} \s_{n-1} (\padded^n_{\tilde\lambda})^{\op}\app{b}
    \ar[d,"\scriptstyle{\repad^n_{\tilde\rho \to u}\app{a} \s_{n-1} \repad^n_{\tilde\lambda\to u}\app{b}}"]\\
    \padded^n_{0,n-1}\app{a} \s_{n-1} \padded^n_{0,n-1}\app{b}
    \ar[d,"\scriptstyle{\Xi^n_{0,n-1}\app{a,b}}"]\\
    \padded^n_{0,n-1}\app{a \s_{n-1} b}
  \end{tikzcd}
\end{align*}
This completes the proof.
\end{proof}

\begin{lemma}\label{lemma:eh-susp}
  Assuming that a cell \(\eh^n_{k,l}\) satisfying the judgement~\eqref{eq:eh-goal} is defined, we can define a cell \(\eh^{n+1}_{k+1,l+1}\) such that:
  \[
    \ehctx^{n+1}\vdash \eh^{n+1}_{k+1,l+1} : \ehty^{n+1}_{k+1,m+1}
  \]
\end{lemma}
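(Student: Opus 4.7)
The plan is to construct $\eh^{n+1}_{k+1,l+1}$ as a substitution instance of the suspension $\Sigma \eh^n_{k,l}$, exploiting the fact that suspension shifts every dimension up by one and therefore converts $k$-composition of $n$-cells into $(k{+}1)$-composition of $(n{+}1)$-cells, which is exactly the shape required.

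First I would form the substitution $\ehctx^{n+1} \vdash \sigma : \Sigma \ehctx^n$ sending the two new $0$-variables $N, S$ to $x$, the suspended variable $x$ (now of type $N\to S$) to $\id_x$, and fixing $a, b$. A routine induction on $n$ using the definitions of suspension and iterated identities shows that $\Sigma \id^{n-1}_x \app{\sigma} = \id^n_x$, so $\sigma$ is well-typed. I then define
\[
  \eh^{n+1}_{k+1,l+1} \;:=\; (\Sigma \eh^n_{k,l})\app{\sigma}.
\]
By Lemma~\ref{suspcomp}, the source of this cell rewrites as $\Sigma(a \s_k b)\app{\sigma} = a \s_{k+1} b$, matching the required source of $\ehty^{n+1}_{k+1,l+1}$.

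What remains is the target identity
\[
  \Sigma(\padded^n_{k,l}\app{a \s_l b})\app{\sigma} \;=\; \padded^{n+1}_{k+1,l+1}\app{a \s_{l+1} b}.
\]
By Lemma~\ref{lemma:padding-suspension}, $\Sigma \padded^n_{k,l}$ is the padding associated with $\Sigma \mathbf{u}^n_{k,l}$ over $\Sigma \mathbf{\Gamma}^n_{k,l}$, so this reduces to comparing two padding constructions. I would introduce the substitutions $\tau_i : \Sigma \Gamma^i_l \to \Gamma^{i+1}_{l+1}$ sending $N, S \mapsto x$, $x \mapsto \id_x$, and $v^i_l \mapsto v^{i+1}_{l+1}$, verify that these assemble into a morphism of filtrations $\Sigma \mathbf{\Gamma}^n_{k,l} \to \mathbf{\Gamma}^{n+1}_{k+1,l+1}$ (shifting all indices up by one), and prove by mutual induction on $i$ the equalities $(\Sigma p^i_{k,l})\app{\tau_{i+1}} = p^{i+1}_{k+1,l+1}$, $(\Sigma q^i_{k,l})\app{\tau_{i+1}} = q^{i+1}_{k+1,l+1}$, and $(\Sigma \padded^i_{k,l})\app{\tau_i} = \padded^{i+1}_{k+1,l+1}$. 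The unitor equalities are coherence comparisons over $\point$ whose sources and targets match thanks to the padding equality, while the padding equality follows by unfolding~\eqref{definingeq} and invoking Lemma~\ref{lemma:suspfunc-context} to commute suspension with functorialisation.

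The main obstacle is the bookkeeping in this mutual induction, which interleaves the padding $\padded^i_{k,l}$ with its constituent generalised unitors across both suspension and substitution. Once it is established, the identity specialises—by composing $\tau_n$ with the substitution $v^{n+1}_{l+1} \mapsto a \s_{l+1} b$ on one side and $\sigma$ with $\Sigma$ of $v^n_l \mapsto a \s_l b$ on the other, which agree variable by variable—to the required target identity, completing the construction.
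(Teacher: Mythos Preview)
Your proposal has a genuine gap in the claimed equality
\[
  (\Sigma p^i_{k,l})\app{\tau_{i+1}} \;=\; p^{i+1}_{k+1,l+1}.
\]
Recall that $p^i_{k,l}$ is defined as a coherence over the point context $\point=(x:\obj)$, so its suspension $\Sigma p^i_{k,l}$ is a coherence over $\Sigma\point$, which is (up to renaming) the $1$\=/disc $\disc^1$. Applying a substitution does not change the head pasting context of a coherence: $(\Sigma p^i_{k,l})\app{\tau_{i+1}}$ is still a term of the form $\coh(\Sigma\point:\,\cdots)[\cdots]$. By contrast, $p^{i+1}_{k+1,l+1}$ is, by Definition~\ref{def:unbiasedpadding}, a coherence over $\point$. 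Two $\coh$\=/terms can only be syntactically equal in $\catt$ if their head pasting contexts coincide, so the displayed equality fails. The same obstruction propagates to $\Sigma\padded^n_{k,l}$ versus $\padded^{n+1}_{k+1,l+1}$: these paddings have unitors living over different pasting contexts, hence the target types $\Sigma(\padded^n_{k,l}\app{a\s_l b})\app{\sigma}$ and $\padded^{n+1}_{k+1,l+1}\app{a\s_{l+1}b}$ are not equal on the nose.

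This is precisely what forces the paper to take a different route. Instead of an equality, it constructs a \emph{repadding} equivalence
\[
  \repad^{n}_{\Sigma(k,l)\to(k+1,l+1)} \;:\; (\Sigma\padded^n_{k,l})\app{v^{n+1}_{l+1}} \;\to\; \padded^{n+1}_{k+1,l+1},
\]
built from repadding data over $\point$, and then sets $\eh^{n+1}_{k+1,l+1}$ to be the composite of the suspended cell with this repadding. Your argument for the source is fine, and your morphism of filtrations is exactly the $\boldsymbol{\psi}_{\susp}$ the paper uses; what is missing is the recognition that suspended and unsuspended unbiased unitors are only \emph{equivalent}, not equal, so a further repadding step is required.
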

\begin{proof}
  The cell
  \( \Sigma \eh^n_{k,l}\app{a,b} \) is of type:
  \[
    a \s_{k+1} b \mapsto (\Sigma \padded^n_{k,l})\app{ a \s_{l+1}
      b}
  \]
  To obtain a cell of the desired type, we use the morphism of filtrations:
  \begin{align*}
    \boldsymbol{\psi}_{\susp} :
    \mathbf{\Gamma}^{n+1}_{k+1,l+1} & \to \Sigma\mathbf{\Gamma}^{n}_{k,l} \\
    \Sigma v^{i}_{l}\app{\psi_{\susp}^{i+1}} &= v^{i+1}_{l+1}
  \end{align*}
  We then define repadding data
  \(\mathbf{r}^{n}_{k,l}=(f^i_{k,l},g^i_{k,l})_{i=m+1}^n\) from
  \(\Sigma\mathbf{u}^{n}_{k,l}\app{\boldsymbol{\psi}_{\susp}}\) to
  \(\mathbf{u}^{n+1}_{k+1,l+1}\), whose associated repadding is denoted
  \(\repad^{i}_{\Sigma(k,l)\to(k+1,l+1)}\). We define this repadding as follows, denoting \(j=i+1\):
  \begin{align*}
    f_{k,l}^i &:= \coh(\point :
                \Sigma p_{k,l}^{i-1}\app{\psi_{\susp}^i} \s_{j} \repad^i_{\Sigma(k,l)\to(k+1,l+1)}\app{\inj^-\cir\,
                \sigma^{j}} \to p_{k+1,l+1}^i) [x] \\
    g_{k,l}^i &:= \coh(\point :\Sigma q^{i-1}_{k,l}\app{\psi_{\susp}^i} \to
                \repad^i_{\Sigma(k,l)\to(k+1,l+1)}\app{\inj^+ \cir\, \sigma^{j}} \s_{j} q^i_{k+1,l+1})[x]
  \end{align*}
  The associated repadding then has type:
  \[
    \Gamma^{n+1}_{l+1} \vdash \repad^n_{\Sigma(k,l)\to(k+1,l+1)} : (\Sigma\padded^n_{k,l})\app{v^{n+1}_{l+1}} \to \padded^{n+1}_{k+1,l+1}
  \]
  We thus define the cell as follows:
  \[
    \eh^{n+1}_{k+1,l+1} := \Sigma\eh^{n}_{k,l} \s_{n} \repad^{n}_{\Sigma(k,l)\to(k+1,l+1)} \qedhere
  \]
\end{proof}

\begin{lemma}\label{lemma:eh-naturality}
  Assuming that a cell \(\eh^n_{k,l}\) satisfying the judgement~\eqref{eq:eh-goal} is
  defined, we can define a cell \(\eh^{n+1}_{k,l}\) such that:
  \[
    \ehctx^{n+1}\vdash \eh^{n+1}_{k,l}: \ehty^{n+1}_{k,l}
  \]
\end{lemma}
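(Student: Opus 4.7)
The plan is to apply the naturality meta\=/operation of Section~\ref{sec:functorialisation} to $\eh^n_{k,l}$ with respect to the set $X := \{a,b\}$ of locally maximal variables of $\ehctx^n$, and then to pull the result back along a suitable substitution into $\ehctx^{n+1}$. Applicability of the operation holds because $X$ is up\=/closed, $\depth_X(\ehctx^n) = 0$, and $\depth_X(\eh^n_{k,l}) = \dim(\eh^n_{k,l}) - \dim(a) = 1$, so one obtains a term
\[
  \ehctx^n \uparrow X \vdash \eh^n_{k,l} \uparrow X : \ehty^n_{k,l} \uparrow^{\eh^n_{k,l}} X
\]
in the context $\ehctx^n \uparrow X = (x : \obj,\ a^\pm, b^\pm : \id^{n-1}_x \to \id^{n-1}_x,\ \fun{a} : a^- \to a^+,\ \fun{b} : b^- \to b^+)$.

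I would then define a substitution $\ehctx^{n+1} \vdash \psi : \ehctx^n \uparrow X$ by $x \mapsto x$, $a^\pm, b^\pm \mapsto \id^n_x$, $\fun{a} \mapsto a$, $\fun{b} \mapsto b$, which is well-typed since, after $a^\pm, b^\pm \mapsto \id^n_x$, the type $a^- \to a^+$ of $\fun{a}$ reduces to $\id^n_x \to \id^n_x$, matching the type of $a$ in $\ehctx^{n+1}$, and similarly for $\fun{b}$. Pulling back along $\psi$ yields an $(n+2)$-cell $(\eh^n_{k,l} \uparrow X)\app{\psi}$ of type $L\app{\psi} \to R\app{\psi}$, where
\[
  L = \eh^n_{k,l}\app{\inj^-} \s_n (\padded^n_{k,l}\app{a \s_l b} \uparrow X), \qquad R = ((a \s_k b) \uparrow X) \s_n \eh^n_{k,l}\app{\inj^+}.
\]

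The cell $(\eh^n_{k,l} \uparrow X)\app{\psi}$ is not yet of the required type $\ehty^{n+1}_{k,l}$ and must be massaged into it. Under $\psi$, both occurrences of $\eh^n_{k,l}\app{\inj^\pm}$ reduce to the same cell $\eh^n_{k,l}\app{\id^n_x, \id^n_x}$, while the naturalities $((a \s_k b) \uparrow X)\app{\psi}$ and $(\padded^n_{k,l}\app{a \s_l b} \uparrow X)\app{\psi}$ can be related, respectively, to $a \s_k b$ and $\padded^{n+1}_{k,l}\app{a \s_l b}$ in $\ehctx^{n+1}$: the first via interchanger coherences, and the second because naturalising a recursively\=/defined padding and specialising at identity endpoints produces the padding one dimension higher, up to coherence. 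All adjustment cells are equivalences by Lemma~\ref{diminverse}, since they appear in a dimension strictly greater than that of their support. Composing $(\eh^n_{k,l} \uparrow X)\app{\psi}$ with these equivalences together with cancellators for the shared occurrences of $\eh^n_{k,l}\app{\id^n_x, \id^n_x}$ will then yield $\eh^{n+1}_{k,l}$ of type $\ehty^{n+1}_{k,l}$. The main obstacle is the explicit identification and composition of the coherences performing this adjustment, which requires careful analysis of how naturality interacts with the recursive definition of padding and with composites, crucially relying on the fact that naturalising a cell and substituting degenerate endpoints recovers the analogous construction one dimension higher.
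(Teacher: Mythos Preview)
Your approach is essentially the paper's: both apply the naturality meta\=/operation to $\eh^n_{k,l}$ with respect to $X=\{a,b\}$, specialise the endpoints $a^\pm,b^\pm$ to $\id^n_x$, and then adjust by coherences. Two points of detail are worth noting. First, no interchangers are needed for $((a\s_k b)\uparrow X)\app{\psi}$: this is literally equal to $a\s_k b$ in $\ehctx^{n+1}$, since functorialising a binary $k$\=/composite of $n$\=/cells with respect to both top cells yields exactly the binary $k$\=/composite one dimension up; likewise, by Lemma~\ref{funcsubs}, $(\padded^n_{k,l}\app{a\s_l b}\uparrow X)\app{\psi}=(\padded^n_{k,l}\uparrow v^n_l)\app{a\s_l b}$, which is the middle factor of $\padded^{n+1}_{k,l}\app{a\s_l b}$. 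Second, the paper's adjustment is more economical than your cancellators: because $\eh^n_{k,l}\app{\id^n_x,\id^n_x}$, $p^n_{k,l}$ and $q^n_{k,l}$ are all terms over the point context $\point$, the required fix\=/up cells $\xi:\id(\id^n_x\s_k\id^n_x)\to \eh^n_{k,l}\app{\id^n_x,\id^n_x}\s_n q^n_{k,l}$ and $\xi':\eh^n_{k,l}\app{\id^n_x,\id^n_x}\to p^n_{k,l}$ exist automatically as coherences over $\point$, giving the composite directly without ever constructing or cancelling an inverse of $\eh^n_{k,l}\app{\id^n_x,\id^n_x}$.
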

\begin{proof}
  We construct the cell as the following composite:
  \[
    \begin{tikzcd}[row sep=17pt]
      a \s_k b
      \ar[d,"\text{\small (unitor)}"] \\
      (a \s_k b) \s_n \id (\id^n_x \s_k \id^n_x)
      \ar[d,"\scriptstyle{(a\s_k b)\s_n \xi}"]\\
      (a \s_k b) \s_n (\eh^{n}_{k,l}\app{\id^n_x,\id^n_x} \s_n q^n_{k,l})
      \ar[d, "\text{\small (associator)}"]\\
      ((a \s_k b) \s_n \eh^{n}_{k,l}\app{\id^n_x, \id^n_x}) \s_n q^n_{k,l}
      \ar[d,"\text{\small (naturality)}"] \\
      (\eh^{n}_{k,l}\app{\id^n_x, \id^n_x} \s_n (\Theta^n_{k,l}\uparrow
      v_l^n)\app{a\s_l b}) \s_n q^n_{k,l}
      \ar[d,"\text{\small (associator)}"]\\
      \eh^{n}_{k,l}\app{\id^n_x, \id^n_x} \s_n (\Theta^n_{k,l}\uparrow
      v_l^n)\app{a\s_l b} \s_n q^n_{k,l}
      \ar[d,"\scriptstyle{\xi' \s_n q^n_{k,l}}"]\\
      p^n_{k,l} \s_n (\Theta^n_{k,l}\uparrow v_l^n)\app{a\s_l b} \s_n q^n_{k,l}
    \end{tikzcd}
  \]
  The step labelled ``naturality'' is an application of the inverse of naturality of the cell \(\eh^{n}_{k,l}\), and $\xi$ and
  $\xi'$ are the unique coherences of the required type in  the context $\point$.\qedhere

\end{proof}

\begin{theorem}\label{thm:half-eh}
  For every \(0\leq k,l < n\) with \(k\neq l\), we can construct a cell
  \(\eh^{n}_{k,l}\) such that:
  \[
    \ehctx^n\vdash \eh^{n}_{k,l} : \ehty^n_{k,l}
  \]
  This witnesses that $a *_k b$ is congruent to $a *_l b$.
\end{theorem}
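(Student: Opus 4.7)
The plan is to combine the three preceding lemmas into a straightforward induction on dimension. Fix indices $0 \le k, l < n$ with $k \ne l$, and set $d := |k-l|+1 \ge 2$ and $s := \min(k,l) \ge 0$, so that $\max(k,l) = d+s-1 < n$.

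First, I will invoke Lemma~\ref{lemma:eh-half-base-valid} to obtain a base-case cell in dimension $d$: if $k > l$ I take $\eh^{d}_{d-1,0}$, and if $l > k$ I take $\eh^{d}_{0,d-1}$. Second, I will apply the suspension construction of Lemma~\ref{lemma:eh-susp} exactly $s$ times; since each application raises both indices by one, after $s$ iterations I obtain a cell $\eh^{d+s}_{k,l}$ of dimension $d+s = \max(k,l)+1$. Third, I will apply the naturality construction of Lemma~\ref{lemma:eh-naturality} exactly $n - \max(k,l) - 1$ times; each application fixes the indices and increases the dimension by one, finally producing $\eh^n_{k,l}$ in the context $\ehctx^n$, with the type $\ehty^n_{k,l}$ required by the judgement~(\ref{eq:eh-goal}).

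To extract the congruence claim from this cell, I will argue as follows. The term $\eh^n_{k,l}$ has dimension $n+1$ in the context $\ehctx^n$ of dimension $n$, so Lemma~\ref{diminverse} guarantees that it is an equivalence between its source $a \s_k b$ and its target $\padded^n_{k,l}\app{a \s_l b}$. Expanding the recursive definition of $\padded^n_{k,l}$ (Def.~\ref{def:unbiasedpadding}), the padded cell is obtained from $a \s_l b$ by iteratively composing, in descending dimensions, with the generalised unbiased unitors $p^i_{k,l}$ and $q^i_{k,l}$, each of which is a coherence in the sense of the type theory. Therefore Def.~\ref{def:congruence} gives that $\padded^n_{k,l}\app{a \s_l b}$ is congruent to $a \s_l b$, and combining with the equivalence $a \s_k b \simeq \padded^n_{k,l}\app{a \s_l b}$ yields the required congruence between $a \s_k b$ and $a \s_l b$.

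The whole argument is really just assembly: the only obstacle is checking that the arithmetic lines up, namely that $\max(k,l) = |k-l| + \min(k,l)$ and that the two inductive moves leave the indices in the target configuration. Both are immediate. All substantive technical content has already been absorbed into Lemmas~\ref{lemma:eh-half-base-valid}, \ref{lemma:eh-susp}, and \ref{lemma:eh-naturality}, as well as the theory of padding and repadding developed earlier in Section~\ref{padding}, so no further difficulty arises at this stage.
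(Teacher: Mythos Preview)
Your proposal is correct and takes essentially the same approach as the paper: the paper's own proof is a one-line citation of Lemmas~\ref{lemma:eh-half-base-valid}, \ref{lemma:eh-susp}, and \ref{lemma:eh-naturality}, and you have simply spelled out the arithmetic of how many times each lemma is invoked. Your additional paragraph unpacking the congruence claim via Lemma~\ref{diminverse} and Definition~\ref{def:congruence} is a welcome elaboration that the paper leaves entirely implicit.
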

\begin{proof}
  This is obtained by Lemmas~\ref{lemma:eh-half-base-valid},~\ref{lemma:eh-susp} and~\ref{lemma:eh-naturality}
\end{proof}


\begin{corollary}\label{corrolary:commutativity}
  Given \(0\leq k,l <n\) with $k \neq l$, we construct cells $\EH^n_{k,l}$ such that the following judgements are derivable:
  \begin{align*}
    \ehctx^n \vdash \EH^n_{k,l} : a \s_k b \to b \s_k a
  \end{align*}
\end{corollary}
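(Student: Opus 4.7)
The plan is to imitate the dimension-2 template from the introduction, where $\EH(a,b) := \eh(a,b) *_2 (\eh^{\op}(b,a))^{-1}$, using the opposite meta-operation $(-)^{\op\{l+1\}}$ in place of $(-)^{\op}$, so that the flip occurs on the direction $l$ that appears inside the padding, rather than on the direction $k$ we wish to commute.

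First I would verify that the context $\ehctx^n$ is invariant under $(-)^{\op\{l+1\}}$. The only nontrivial check is for the type $\id^{n-1}_x \to \id^{n-1}_x$: since $\id^{n-1}_x$ is fully degenerate, Lemma~\ref{opcomp} together with the self-duality of iterated identities shows that $(\id^{n-1}_x)^{\op\{l+1\}} = \id^{n-1}_x$, so the type is preserved, and the variables $a, b$ themselves are fixed by any opposite. Then applying $(-)^{\op\{l+1\}}$ to $\eh^n_{k,l}(a,b) : a *_k b \to \padded^n_{k,l}(a *_l b)$, I compute the resulting type. The source is unaffected because $l+1 \ne k+1$, so $(a *_k b)^{\op\{l+1\}} = a *_k b$. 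For the target, Proposition~\ref{lemma:symmetry-main} yields $(\padded^n_{k,l})^{\op\{l+1\}} = \padded^n_{k,l}$, and the inner composite satisfies $(a *_l b)^{\op\{l+1\}} = b *_l a$. Thus I obtain a term
\[
  \eh^{n,\op}_{k,l}(a,b) := (\eh^n_{k,l})^{\op\{l+1\}}\app{a,b} : a *_k b \to \padded^n_{k,l}(b *_l a)
\]
derivable in $\ehctx^n$.

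Next I swap the roles of $a$ and $b$ by substitution, producing $\eh^{n,\op}_{k,l}(b,a) : b *_k a \to \padded^n_{k,l}(a *_l b)$. This is an $(n+1)$-cell, and since $\dim(\eh^{n,\op}_{k,l}(b,a)) = n+1 > n = \dim(\ehctx^n)$, Lemma~\ref{diminverse} guarantees that it is an equivalence, so its chosen inverse
\[
  (\eh^{n,\op}_{k,l}(b,a))^{-1} : \padded^n_{k,l}(a *_l b) \to b *_k a
\]
exists. Finally I define
\[
  \EH^n_{k,l}(a,b) := \eh^n_{k,l}(a,b) \s_n (\eh^{n,\op}_{k,l}(b,a))^{-1},
\]
which has source $a *_k b$ and target $b *_k a$, and is derivable in $\ehctx^n$ as required.

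The construction is essentially a bookkeeping exercise once Theorem~\ref{thm:half-eh} is in place; the only nonroutine verifications are the self-duality of $\ehctx^n$ under $(-)^{\op\{l+1\}}$ and the matching of types between $\eh^n_{k,l}(a,b)$ and $(\eh^{n,\op}_{k,l}(b,a))^{-1}$, both of which reduce to the self-duality of $\padded^n_{k,l}$ proved in Proposition~\ref{lemma:symmetry-main}. I expect no genuine obstacle: the hard work is entirely absorbed into the existence of $\eh^n_{k,l}$ and the self-duality of the unbiased padding.
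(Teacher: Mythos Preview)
Your proposal is correct and takes essentially the same approach as the paper: the paper defines $\EH^n_{k,l} := \eh^n_{k,l}\app{a,b} \s_n ((\eh^n_{k,l})^{\op\{l+1\}}\app{b,a})^{-1}$ and cites Theorem~\ref{thm:half-eh} and Proposition~\ref{lemma:symmetry-main}, which is exactly your construction with $\eh^{n,\op}_{k,l}$ unfolded. Your additional verifications (invariance of $\ehctx^n$ under $(-)^{\op\{l+1\}}$, the type computation, and invertibility via Lemma~\ref{diminverse}) fill in details that the paper leaves implicit.
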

\begin{proof}
We make the following definition:
\[
    \EH^n_{k,l} := \eh^n_{k,l}\app{a,b} \s_n ((\eh^n_{k,l})^{\op\{l+1\}}\app{b,a})^{-1}
\]
  The judgement follows from Theorem~\ref{thm:half-eh} and
  Proposition~\ref{lemma:symmetry-main}.
\end{proof}

We can also extend our construction of $\eh^n_{k,l}$ to include the case where a
padding $\padded^n_{p,-}$ appears in both the source and the target, as in the
vertical morphisms appearing in the Eckmann-Hilton sphere in Figure~\ref{ehsphere}.

\begin{corollary}\label{corollary:ehpkl}
  For \(n \in \N\) and \(p,k,l \leq n\) with \(k\neq l\), there exist terms:
  \begin{align*}
    \ehctx^n &\vdash \eh^{n}_{p,k,l} : \padded^{n}_{p,k}\app{a\s_{k}b}
               \to \padded^{n}_{p,l}\app{a\s_{l}b}
  \end{align*}
\end{corollary}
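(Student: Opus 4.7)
The plan is to leverage Theorem~\ref{thm:half-eh} directly, exploiting the observation that the cells $\eh^{n}_{p,m}\app{a,b}$ for varying second index $m$ all share the common source $a \s_{p} b$. Pre-inverting one such cell and post-composing with another then glues two Eckmann-Hilton witnesses into a single cell whose source and target are the desired padded composites.

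Concretely, I distinguish three cases according to the value of $p$. When $p \notin \{k,l\}$, Theorem~\ref{thm:half-eh} provides both
\[
  \eh^{n}_{p,k}\app{a,b} : a \s_{p} b \to \padded^{n}_{p,k}\app{a \s_{k} b}
  \qquad\text{and}\qquad
  \eh^{n}_{p,l}\app{a,b} : a \s_{p} b \to \padded^{n}_{p,l}\app{a \s_{l} b}.
\]
Since these are $(n{+}1)$-cells in the $n$-dimensional context $\ehctx^{n}$, Lemma~\ref{diminverse} implies that they are equivalences; in particular $\eh^{n}_{p,k}\app{a,b}$ admits a chosen inverse, and I define
\[
  \eh^{n}_{p,k,l} := (\eh^{n}_{p,k}\app{a,b})^{-1} \s_{n} \eh^{n}_{p,l}\app{a,b}.
\]
When $p = k$, the unbiased padding $\padded^{n}_{k,k}$ reduces to the identity (as recorded in the discussion of $\eh^{n}_{k,k,l}$ in the introduction), so the desired source simplifies to $a \s_{k} b$ and I set $\eh^{n}_{k,k,l} := \eh^{n}_{k,l}\app{a,b}$. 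Symmetrically, when $p = l$ the target simplifies to $a \s_{l} b$, and I set $\eh^{n}_{l,k,l} := (\eh^{n}_{k,l}\app{a,b})^{-1}$, again invoking Lemma~\ref{diminverse}.

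The main obstacle is the treatment of the degenerate cases $p \in \{k,l\}$: one must verify that $\padded^{n}_{k,k}$ really is identified with the identity padding in the statement. If this identification is only up to equivalence rather than syntactic, the remedy is to insert a coherence $\padded^{n}_{k,k}\app{t} \to t$ (whose existence follows from Lemma~\ref{diminverse} applied to a small pasting context carrying $t$ as its top-dimensional generator) before or after the relevant $\eh^{n}_{k,l}\app{a,b}$. Apart from this bookkeeping, the construction is a routine assembly from Theorem~\ref{thm:half-eh} together with the invertibility machinery, which is consistent with this result being stated as a corollary of the main theorem.
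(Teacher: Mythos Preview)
Your handling of the degenerate cases matches the paper's, with one slip: in the case $p=l$ you want a cell of type $\padded^{n}_{l,k}\app{a\s_{k}b}\to a\s_{l}b$, and this is $(\eh^{n}_{l,k}\app{a,b})^{-1}$, not $(\eh^{n}_{k,l}\app{a,b})^{-1}$ (the latter has type $\padded^{n}_{k,l}\app{a\s_{l}b}\to a\s_{k}b$). The paper's own introduction contains the same typo, so you were likely misled; the proof in the body uses the correct $(\eh^{n}_{l,k})^{-1}$.

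For the main case $p\notin\{k,l\}$, your construction is correct and genuinely different from the paper's. You simply splice $(\eh^{n}_{p,k})^{-1}$ and $\eh^{n}_{p,l}$ along their shared end $a\s_{p}b$; both ingredients exist directly by Theorem~\ref{thm:half-eh} and are invertible by Lemma~\ref{diminverse}. The paper instead applies the functorialisation $(\padded^{n}_{p,k}\uparrow v^{n}_{k})$ to $\eh^{n}_{k,l}$, obtaining a cell landing in the \emph{nested} padding $\padded^{n}_{p,k}\app{\padded^{n}_{k,l}\app{a\s_{l}b}}$, then invokes the iterated-padding machinery of Section~\ref{subsec:iterated} (Proposition~\ref{prop:padding-compose}) together with a further point-context repadding to collapse this to $\padded^{n}_{p,l}\app{a\s_{l}b}$. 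Your route is considerably shorter and bypasses Section~\ref{subsec:iterated} entirely; the paper's route exercises the padding-composition theory and stays ``internal'' to the padding calculus (never passing through the unpadded $a\s_{p}b$), which may be preferable if one later wants structured relations among the $\eh^{n}_{p,k,l}$, but for the bare existence claim stated here your argument suffices.
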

\begin{proof}
  If \(p=k\), we define
  \(H^{n}_{k,k,l} := H^{n}_{k,l}\), and if \(p=l\), we define
  \(H^{n}_{l,k,l}:= (H^{n}_{l,k})^{-1}\). Suppose that \(p,k,l\) are pairwise
  disjoint, then by Prop.~\ref{prop:padding-compose}, we get terms
  \(\mu^{n}_{p,k,l} :=
  \mu^{n}_{\mathbf{u}^{n}_{p,k},\mathbf{u}^{n}_{k,l}}\app{a\s_{l}b}\).
  In context \(\Gamma^{n}_{l}\), the term \(\mu^{n}_{p,k,l}\) has type:
  \[
    \padded^{n}_{p,k}\app{\padded^{n}_{k,l}\app{a\s_{l}b}} \to
    \padded^{n}_{\mathbf{u}^{n}_{p,k}\square\mathbf{u}^{n}_{k,l}}\app{a\s_{l} b}
  \]
  We then define repadding data $(f^i_{p,k,l},g^i_{p,k,l})$ in the point context \(\mathbb{P} = (x:\obj)\), with associated repadding \(\repad^n_{(p,k)\square(k,l)\to(p,l)}\):
  \begin{align*}
    f^i_{p,k,l} &:= \coh(\point : p^i_{p,k}\boxminus p^i_{k,l} \s_i \repad^i_{(p,k)\square(k,l)\to(p,l)}\app{\inj^-\cir\sigma^{i+1}} \to p^i_{p,l})[x]\\
    g^i_{p,k,l} &:= \coh(\point : q^i_{k,l}\boxplus q^i_{p,k} \to \repad^i_{(p,k)\square(k,l)\to(p,l)}\app{\inj^+\cir\sigma^{i+1}}\s_i q^i_{p,m})[x]
  \end{align*}
  We then have:
  
  \[
    \Gamma^{n}_{l} \vdash \repad^{n}_{(p,k)\square(k,l)\to(p,l)} : \padded^{n}_{\mathbf{u}^{n}_{p,k}\square\mathbf{u}^{n}_{k,l}}\app{a\s_{l}b}
    \to \padded^{n}_{p,l}\app{a\s_{l}b}
  \]
  This lets us define the term \(H^{n}_{p,k,l}\) as follows:
  \[
    (\padded^{n}_{p,k}\uparrow v^{n}_{k})\app{\eh^{n}_{k,l}}
    \s_{n}
    \mu^{n}_{p,k,l}\app{a\s_{l}b}\s_{n}\repad^{n}_{(p,k)\square(k,l)\to(p,l)}\app{a\s_{l}b}\qedhere
  \]
\end{proof}

This completes our construction of all cells appearing in the Eckmann-Hilton spheres in all dimensions. By the work of Benjamin and Markakis, these are all equivalences \cite{benjamin_invertible_2024}.

\section*{Acknowledgements}

\noindent
We would like to thank Alex Corner, Eric Finster and Alex Rice for helpful conversations.

\bibliography{bibliography.bib}
\bibliographystyle{elsarticle-num}

\appendix
\renewcommand{\thesection}{\Alph{section}}

\section{Implementation}\label{sec:implementation}
\noindent
The type theory $\catt$ is implemented as a proof assistant.
The proof assistant reads \texttt{.catt} files, and
typechecks the terms defined therein.

The provided version has our constructions of the cells \(\eh^{n}_{k,l}\) and
\(\EH^{n}_{k,l}\) implemented as new built-in operations, accessible under the names \verb!H! and
\verb!EH!. When invoked with suitable arguments, these will trigger our construction to be executed within the proof assistant. As an example, the following commands typecheck and print the
terms $\eh^3_{2,0}$ and $\EH^3_{2,0}$:
\begin{lstlisting}[language=catt]
check H(3,2,0)
check EH(3,2,0)
\end{lstlisting}

 Using this automation, we can easily
compare the size of the terms generated as $n$, $k$, and $l$ vary. To assess the
complexity of the terms we produce, we use an output
method where all the subterms are recursively defined through ``let-in''
definitions. If a subterm appears multiple times, it is only defined once, and
the corresponding name is reused. This allows us to factor out the
high degree of repetition in the terms we produce, thus giving a reasonable lower-bound of the work a user would have to do to define those terms manually.

We have used this method to generate a range of pre-computed output artifacts, which can be found on github~\cite{benjamin_catt_software_2024} in the directory \verb|results|. As
$n-\max\{k,l\}$ increases, the size of the output artifact grows
rapidly, and we find that terms with $n-\max\{k,l\} > 4$ are
typically too large to be computed and type-checked on our available resources, due to the memory overhead required by the type-checker. Performance analysis indicates  that it is the naturality
step that dominates the complexity of the proof terms in the limit. In Figure~\ref{table:eh-chars}, we list the sizes of a variety of artifacts that we have constructed.

Types in Martin-L\"of Type Theory have the structure of an
\(\omega\)\=/groupoid~\cite{lumsdaine_weak_2009, vandenberg_types_2011,
  altenkirch_syntactical_2012}, and this has been exploited by Benjamin to implement a pipeline that can convert \catt terms to elements of identity types in Homotopy Type Theory, within the prover Rocq~\cite{benjamin_generating_2024}.  We have run this on a selection of our generated terms, and in each case Rocq has successfully validated the resulting structures. It may be interesting to explore opportunities to integrate such Rocq outputs as part of larger proof terms in Homotopy Type Theory.

\begin{figure}[b]
    \centering\small
    \def\jts{\,\,}
     \begin{tabular}{|@{\jts}c@{\jts}|@{\jts}r@{\jts}|@{\jts}r@{\jts}|@{\jts}r@{\jts}|@{\jts}r@{\jts}|@{\jts}r@{\jts}|@{\jts}r@{\jts}|}
     \hline
          $n$ & $\eh^n_{1,0}$ & $\eh^n_{2,1}$ & $\eh^n_{3,2}$ & $\eh^n_{2,0}$ & $\eh^n_{3,1}$ & $\eh^n_{3,0}$\\
          \hline
          $2$ & 5,340 &  & & & & \\
          $3$ & 67,208 & 6,993 & & 44,209 & &\\
          $4$ & 5,339,606 & 116,343 & 8,152 &  3,117,243 & 73,981 & 2,615,998 \\
          $5$ & \scriptsize(overflow) & \scriptsize(overflow) & 178,592 & \scriptsize(overflow) & 6,176,548 & \scriptsize(overflow)
          \\\hline
     \end{tabular}
    \caption{\centering Character counts for selected artifacts $\eh^n_{k,l}$.}
    \label{table:eh-chars}
\end{figure}

\section{Interactions Between Meta-Operations}
\noindent
Here we record some lemmas about the various interactions between suspensions, opposites, functoriality, and substitutions. Some of these results are already known and we give references where appropriate.

\begin{lemma}\label{lemma:susp-results}
  Let $\sigma : \Delta \to \Gamma $ be a substitution. Then:
  \begin{itemize}
  \item If \(\Gamma\vdash t : A\), $\Sigma(t\app{\sigma})=(\Sigma t)\app{\Sigma\sigma}$
  \item If \(\Gamma\vdash A\), $\Sigma(A\app{\sigma})=(\Sigma A)\app{\Sigma\sigma}$
  \end{itemize}
  Moreover, if \(\Gamma\vdashps\), then
  \(\Sigma\partial^{\pm}\Gamma = \partial^{\pm}\Sigma\Gamma\), and
  \(\Sigma\id_{\Gamma} = \id_{\Sigma\Gamma}\).
\end{lemma}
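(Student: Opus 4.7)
The plan is to prove the substitution-compatibility statements by mutual induction on the derivation of the untyped syntax, simultaneously for terms, types, and substitutions. At each stage we unfold the definitions of $\Sigma$ (Def.~\ref{suspdefinition}) and of substitution application (Fig.~\ref{def:substitution}) and reduce to the inductive hypotheses. The key auxiliary fact to establish first is that suspension is compatible with substitution composition, namely
\[
  \Sigma(\tau \cir \sigma) = (\Sigma\tau) \cir (\Sigma\sigma).
\]
This is proved by induction on $\tau$: the empty case is immediate since $\Sigma\langle\rangle = \langle N\mapsto N, S\mapsto S\rangle$ acts as the identity on the two new $0$-variables of $\Sigma\Delta$, and the cons case $\tau=\langle\tau', x\mapsto t\rangle$ reduces by the definition of composition to the substitution statement $\Sigma(t\app{\sigma}) = (\Sigma t)\app{\Sigma \sigma}$ together with the inductive hypothesis.

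With composition in hand, the substitution statements follow. For types: $\obj\app{\sigma}=\obj$ and $\Sigma\obj = N\to_\obj S$ is fixed by $\Sigma\sigma$ since $\Sigma\sigma$ acts as the identity on $N,S$; and the arrow case $(u\to_A v)\app{\sigma}$ reduces by definition to the term and type inductive hypotheses. For terms: variables are handled by a direct induction on the substitution (using that $\Sigma$ preserves variable names); the coherence case $\coh(\Gamma':A)[\tau]\app{\sigma} = \coh(\Gamma':A)[\tau\cir\sigma]$ reduces to the compositional property just established. All of this is purely a matter of unfolding definitions; the only subtlety is keeping track of the fact that $\Sigma$ leaves variable names unchanged so that the new base variables $N,S$ never clash with substitution targets.

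For the second part, concerning pasting contexts, the equalities $\Sigma \partial^\pm\Gamma = \partial^\pm\Sigma\Gamma$ and $\Sigma\id_\Gamma = \id_{\Sigma\Gamma}$ are both proved by induction on the Batanin tree associated with $\Gamma$ (equivalently, on the rules of the pasting context judgement $\vdashps$). The suspension of a Batanin tree simply grafts a new edge at the root, which corresponds on the side of pasting contexts to prepending the two $0$-variables $N,S$ and bumping every other variable up one dimension. This makes the boundary operation commute with suspension directly by the combinatorial description of $\partial^\pm$ as removing the leaves of maximal height. The identity substitution $\id_\Gamma$ maps each variable to itself, and since $\Sigma$ preserves variable names, $\Sigma\id_\Gamma$ again maps each variable of $\Sigma\Gamma$ to itself (including $N$ and $S$, which by the definition of $\Sigma\langle\rangle$ are fixed), so equals $\id_{\Sigma\Gamma}$.

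The main obstacle is not conceptual but notational: ensuring the base case of $\Sigma\langle\rangle$ is handled correctly so that the two fresh variables $N,S$ appear exactly once per application of $\Sigma$ and are not duplicated when composing or substituting. Once that bookkeeping is made explicit, every clause is a one-line unfolding, and the lemma follows. These results are standard and have essentially appeared before in~\cite{benjamin_type_2020,benjamin_hom_2024}; the proof here is given for completeness and to fix our conventions.
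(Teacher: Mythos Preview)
Your proposal is correct and follows the standard approach. The paper's own proof of this lemma is simply a citation to~\cite[Lemma~71]{benjamin_type_2020}, so you have in fact supplied more detail than the paper does; your mutual induction on the syntax, together with the auxiliary compatibility of suspension with substitution composition, is exactly the argument one finds in that reference.
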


\begin{proof}
This was proved by Benjamin~\cite[Lemma 71]{benjamin_type_2020}.
\end{proof}

\begin{lemma}\label{suspcomp}
  For any family of terms \(t_{0},\ldots,t_{n}\) in a context \(\Gamma\) such
  that \(t_0 \s_k \dots \s_k t_n\) is well defined in \(\Gamma\), the following
  equality holds:
  \[
    \Sigma(t_0 \s_k \dots \s_k t_n) = (\Sigma t_0) \s_{k+1} \dots \s_{k+1}
    (\Sigma t_n)
  \]
  For any term \(\Gamma\vdash t:A\), the following equality holds:
  \[
    \Sigma(\id^n_{t}) = \id^n_{\Sigma t}
  \]
  \end{lemma}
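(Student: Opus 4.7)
The plan is to reduce both statements to the claim that suspension commutes with the composite construction on pasting contexts, i.e. $\Sigma \comp_\Gamma = \comp_{\Sigma \Gamma}$. Once this is established, everything follows by combining it with Lemma~\ref{lemma:susp-results}, which already provides the commutation of suspension with substitution application. The key geometric observation is that if $\Gamma$ is the pasting context realising $t_0 *_k \cdots *_k t_n$ (obtained by gluing discs of various dimensions along their common $k$-dimensional boundary), then $\Sigma \Gamma$ is precisely the pasting context realising composition at level $k+1$: suspension sends each disc $\disc^d$ to $\disc^{d+1}$ (up to the canonical renaming) and shifts all gluing-dimensions up by one.

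First, I would prove that $\Sigma \comp_\Gamma = \comp_{\Sigma \Gamma}$ for every pasting context $\Gamma$, by induction on the structure of $\Gamma$. In the base case $\Gamma = \disc^n$, we have $\comp_\Gamma = d^n$, and suspension fixes variables, so $\Sigma d^n = d^n$, which is the composite of $\Sigma \disc^n \cong \disc^{n+1}$ under the canonical renaming. For the inductive step, expanding the definition of $\comp_\Gamma$ as a coherence gives
\[\Sigma \comp_\Gamma = \coh(\Sigma \Gamma : \Sigma \comp_{\partial^- \Gamma} \to \Sigma \comp_{\partial^+ \Gamma})[\Sigma \id_\Gamma].\]
By Lemma~\ref{lemma:susp-results} we have $\Sigma \partial^\pm \Gamma = \partial^\pm \Sigma \Gamma$ and $\Sigma \id_\Gamma = \id_{\Sigma \Gamma}$, and the induction hypothesis gives $\Sigma \comp_{\partial^\pm \Gamma} = \comp_{\partial^\pm \Sigma \Gamma}$, so the right-hand side equals $\comp_{\Sigma \Gamma}$ by definition.

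For the composite statement, write $t_0 *_k \cdots *_k t_n = \comp_\Gamma[\sigma]$ where $\sigma$ sends the locally-maximal variables of $\Gamma$ to $t_0, \dots, t_n$. Then Lemma~\ref{lemma:susp-results} gives $\Sigma(\comp_\Gamma[\sigma]) = (\Sigma \comp_\Gamma)[\Sigma \sigma]$, which by the preceding claim equals $\comp_{\Sigma \Gamma}[\Sigma \sigma] = \Sigma t_0 *_{k+1} \cdots *_{k+1} \Sigma t_n$, using that $\Sigma \sigma$ sends the locally-maximal variables of $\Sigma \Gamma$ to $\Sigma t_0, \dots, \Sigma t_n$.

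For the identity statement, $\id_t = \coh(\disc^n : d^n \to d^n)[t]$, so Lemma~\ref{lemma:susp-results} yields
\[\Sigma \id_t = \coh(\Sigma \disc^n : \Sigma d^n \to \Sigma d^n)[\Sigma t] = \id_{\Sigma t},\]
again after identifying $\Sigma \disc^n$ with $\disc^{n+1}$. The iterated identity case then follows by a straightforward induction on the iteration depth, using $\id^{k+1}_t = \id_{\id^k_t}$. The only subtlety worth flagging is the implicit identification of $\Sigma \disc^n$ with $\disc^{n+1}$ through a renaming of the two basepoints $N,S$ to $d^0_-,d^0_+$ and a re-indexing of higher variables; this is the same up-to-renaming convention used throughout the paper for pasting contexts (cf. Definition~\ref{suspdefinition}), and it is the one mildly delicate bookkeeping point, though it presents no real obstacle.
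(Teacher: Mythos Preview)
Your proposal is correct and follows essentially the same approach as the paper: you first establish the general fact $\Sigma\comp_\Gamma = \comp_{\Sigma\Gamma}$ by induction on $\Gamma$ (using Lemma~\ref{lemma:susp-results} for the boundary and identity substitutions), then derive the composite statement by substitution, and handle the iterated identity by induction on the iteration depth after identifying $\Sigma\disc^n$ with $\disc^{n+1}$ up to renaming. The only point the paper makes explicit that you leave implicit is the well-foundedness of the induction on pasting contexts (via the fact that $\partial^\pm$ strictly lowers dimension and that the unique $0$-dimensional pasting context is a disc), but this is a minor bookkeeping matter.
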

  \begin{proof}
    For the first claim, we prove the more general statement that for any
    pasting context \(\Gamma\), we have
    \(\Sigma\comp_{\Gamma} = \comp_{\Sigma\Gamma}\). We prove this by induction
    on the pasting context \(\Gamma\). If \({\Gamma = \disc^{n}}\) is a disc, then
    we note that up to the \(\alpha\)\=/conversion renaming \(d^{n}\) into
    \(d^{n+1}\), we have \(\Sigma\disc^{n} = \disc^{n+1}\). Then, up to the same
    \(\alpha\)\=/conversion, we have:
    \(\Sigma\comp_{\disc^{n}} = d^{n} = \comp_{\disc^{n+1}}\). When \(\Gamma\)
    is not a disc, we have by induction and Lemma~\ref{lemma:susp-results}.
    \begin{align*}
      \Sigma\comp_{\Gamma}
      & = \coh(\Sigma\Gamma,\Sigma\comp_{\partial^{-}\Gamma}
        \to \comp_{\partial^{+}\Gamma})[\Sigma\id_{\Gamma}] \\
      & = \coh(\Sigma\Gamma,\comp_{\partial^{-}\Gamma} \to
        \comp_{\partial^{+}\Gamma})[\id_{\Sigma\Gamma}] \\
      & = \comp_{\Sigma\Gamma}
    \end{align*}
    Here we use the fact the \(\partial^{\pm}\) reduces the dimension of pasting
    contexts and that the unique pasting context of dimension \(0\) is a disc
    for this induction to be well-founded.

    For the second statement, we proceed by induction on $n$. By definition,
    $\Sigma(\id^0_t) = \Sigma t = \id^0_{\Sigma t}$. For $n>0$, we have:
    \begin{align*}
      \Sigma(\id^n_t)
      &=\coh(\Sigma \disc^n : \Sigma d^n \to \Sigma d^n)[\Sigma \id^{n-1}_t] \\
      &= \coh(\disc^{n+1} : d^{n+1} \to d^{n+1})[\Sigma \id^{n-1}_t]\\
      &= \coh(\disc^{n+1} : d^{n+1} \to d^{n+1})[ \id^{n-1}_{\Sigma t}]\\
      &= \id^n_{\Sigma t} \tag*{\qedhere}
    \end{align*}
  \end{proof}

\begin{lemma}\label{opsub}
  Let $M\subseteq\mathbb{N}_{>0}$ and
  \(\sigma:\Delta\to\Gamma\). For any term $\Gamma\vdash t : A$, we have:
    \[ (t\app{\sigma})^{\op M} = t^{\op M}\app{\sigma^{\op M}}\]
    Similarly, for any substitution
    $\tau : \Gamma \to \Theta$, we have:
    \[
      (\tau \cir \sigma)^{\op M} = \tau^{\op M}\cir \sigma^{\op M}
    \]
  \end{lemma}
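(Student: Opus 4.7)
The plan is to prove the two equalities together with the auxiliary statement that $(A\app{\sigma})^{\op M} = A^{\op M}\app{\sigma^{\op M}}$ for any type $\Gamma \vdash A$, by a single mutual induction on the derivations of $t$, $A$, and $\sigma$. The cases for types and for substitutions will be almost immediate from the inductive hypotheses and the defining equations for the action of substitutions (Figure~\ref{def:substitution}) and for the opposite meta-operation (Definition~\ref{opdefinition}); the only delicate point will be the coherence case, which I isolate below.

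First I would dispatch the easy cases. For variables, $x^{\op M} = x$ by definition, and $x\app{\sigma}$ is read off from $\sigma$ variable-by-variable, so the equation reduces to the extension case of $\sigma$, which is handled by pattern matching on the last entry. For the type $\obj$, both sides of the type equation equal $\obj$. For an arrow type $u \to_A v$, the claim follows from the inductive hypotheses on $u$, $v$, $A$, together with the case split in Definition~\ref{opdefinition} on whether $\dim u + 1 \in M$, which is invariant under substitution since substitution preserves dimensions of types. For substitutions, the empty case is immediate, and for $\langle \sigma, x \mapsto t\rangle$ one unfolds both sides and uses the inductive hypothesis on $t$; the equation $(\tau \cir \sigma)^{\op M} = \tau^{\op M}\cir \sigma^{\op M}$ then follows by a further induction on $\tau$, using the already-established substitution case applied entrywise.

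The main obstacle is the coherence case $t = \coh(\Gamma' : A)[\tau]$. Here
\[
t\app{\sigma} = \coh(\Gamma' : A)[\tau \cir \sigma],
\]
so after applying the opposite one obtains, writing $\gamma : \Gamma'^{\op M} \to \Gamma''$ for the canonical reordering isomorphism,
\[
(t\app{\sigma})^{\op M} = \coh(\Gamma'' : A^{\op M}\app{\gamma})[\gamma^{-1} \cir (\tau \cir \sigma)^{\op M}].
\]
On the other hand,
\[
t^{\op M}\app{\sigma^{\op M}} = \coh(\Gamma'' : A^{\op M}\app{\gamma})[(\gamma^{-1} \cir \tau^{\op M}) \cir \sigma^{\op M}].
\]
Equality of the two coherence terms now reduces to the equality of their substitution arguments, which follows from the inductive hypothesis $(\tau \cir \sigma)^{\op M} = \tau^{\op M} \cir \sigma^{\op M}$ together with associativity of substitution composition. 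The key point is that the reordering isomorphism $\gamma$ depends only on $\Gamma'$ and $M$, not on $\sigma$, so it is the same on both sides and can be pulled through the composition unchanged.

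Finally, to make the mutual induction well-founded, I would order the statements by the usual lexicographic measure (derivation size of the main argument), noting that each recursive appeal is on a strict subterm or a shorter substitution; in particular the coherence case recurses only into the inner substitution $\tau \cir \sigma$, whose constituents are subterms of $t\app{\sigma}$.
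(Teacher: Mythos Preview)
Your proof plan is correct and is the standard direct argument: a mutual structural induction on terms, types, and substitutions, with the coherence case reducing to the substitution case via associativity and the observation that the reordering isomorphism $\gamma$ depends only on the pasting context and $M$. The well-foundedness is exactly as you say, since in the coherence case one recurses into the inner substitution $\tau$, which is a strict subterm of $t$.

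The paper takes a different, much shorter route: it simply cites the functoriality of opposites for substitutions from Benjamin and Markakis~\cite[\S5.2]{benjamin_hom_2024}, and then derives the term statement from the substitution statement by invoking the bijection between terms $\Gamma \vdash t : A$ of dimension $n$ and substitutions $\Gamma \to \disc^n$. In other words, the paper treats the substitution case as a black box and reduces the term case to it conceptually, rather than unwinding the mutual induction. Your approach is more elementary and entirely self-contained, which is pedagogically valuable; the paper's approach is more economical but relies on the reader trusting the cited reference and seeing why the term/substitution correspondence commutes with both $(-)^{\op M}$ and application. Both are fine; yours would be preferable in a setting where one does not want to appeal to external results.
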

  \begin{proof}
    This is functoriality of the opposites construction, proved by Benjamin and Markakis~\cite[§5.2]{benjamin_hom_2024}, together with the fact that well-typed terms $\Gamma \vdash t : A$ of dimension $n$ are in bijection with substitutions $\Gamma \vdash \sigma : \disc^n$.
\end{proof}

\begin{lemma}\label{opcomp}
  Let \(\Gamma\) be a context and \(M\subseteq\mathbb{N}_{>0}\), for any family
  $t_0,\dots t_n$ of terms in \(\Gamma\) such that \(t_0 \s_k \dots \s_k t_n\)
  is well defined, we have:
  \[
    (t_0 \s_k \dots \s_k t_n)^{\op M}\!=\!\begin{cases}
      (t_0)^{\op M}\!\s_k\!\dots\!\s_k\!(t_n)^{\op M} &\!k\!+\!1\!\notin M\\
      (t_n)^{\op M}\!\s_k\!\dots\!\s_k\!(t_0)^{\op M} &\!k\!+\!1\!\in M
    \end{cases}
  \]
  For any term \(t\) in \(\Gamma\), we have
  $$(\id^m_t)^{\op M} = \id^m_{t^{\op M}}$$
\end{lemma}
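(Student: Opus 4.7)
The plan is to reduce both claims to an explicit computation of the opposite of the composition coherence, via functoriality of opposites under substitution (Lemma~\ref{opsub}). Specifically, I would write $t_0 \s_k \cdots \s_k t_n = \comp_\Delta\app{\tau}$, where $\Delta$ is the pasting context formed by gluing discs of dimensions $\dim(t_i)$ along their $k$-boundaries, and $\tau$ sends the locally-maximal variables $v_0, \dots, v_n$ of $\Delta$ to $t_0, \dots, t_n$. By Lemma~\ref{opsub},
\[
(t_0 \s_k \cdots \s_k t_n)^{\op M} = (\comp_\Delta)^{\op M}\app{\tau^{\op M}},
\]
with $\tau^{\op M}$ sending the corresponding locally-maximal variables to the $t_i^{\op M}$. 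The degenerate case in which all $t_i$ coincide (when $\dim(t_i) \le k$) is immediate, so we may assume $\comp_\Delta$ is a genuine coherence term.

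Next, I would unfold the definition of $\op M$ on coherences to obtain
\[
(\comp_\Delta)^{\op M} = \coh(\Delta' : (\comp_{\partial^-\Delta} \to \comp_{\partial^+\Delta})^{\op M}\app{\gamma})[\gamma^{-1}],
\]
and analyse how the reordering isomorphism $\gamma : \Delta^{\op M} \to \Delta'$ acts on the locally-maximal variables. The key structural observation is that the Batanin tree of $\Delta$ is linear at dimension $k+1$: the sequence $v_0, \dots, v_n$ (or the subtrees containing them) is arranged along the $(k+1)$-spine. Taking $\op M$ flips the direction of $i$-cells for each $i \in M$. When $k+1 \notin M$ the spine direction is preserved, so $\gamma$ acts as the identity on locally-maximal variables, and composing with $\tau^{\op M}$ recovers $t_0^{\op M} \s_k \cdots \s_k t_n^{\op M}$. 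When $k+1 \in M$ the spine direction is reversed, and $\gamma$ sends $v_i \mapsto v_{n-i}$ on locally-maximal variables (up to a reordering of lower-dimensional variables which does not affect $\tau^{\op M}$ on these); composing with $\tau^{\op M}$ then yields $t_n^{\op M} \s_k \cdots \s_k t_0^{\op M}$.

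For the identity claim I would induct on $m$. The base case $\id_t = \coh(\disc^n : d^n \to d^n)[t]$ with $n = \dim(t)$ is direct: the disc context is fixed by $\op M$, so $\Delta' = \disc^n$ and $\gamma$ is the identity, and the type $d^n \to d^n$ has equal source and target, giving $(\id_t)^{\op M} = \coh(\disc^n : d^n \to d^n)[t^{\op M}] = \id_{t^{\op M}}$. The inductive step follows immediately from $\id^m_t = \id_{\id^{m-1}_t}$ combined with the induction hypothesis.

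The main obstacle will be rigorously verifying the action of $\gamma$ on locally-maximal variables for the specific linear pasting contexts: the Batanin-tree picture makes the conclusion transparent, but writing it out formally requires tracing the effect of $\op M$ on the canonical variable ordering of such a pasting context, and in particular confirming that the lower-dimensional reorderings induced by $\gamma$ do not disturb the images of the locally-maximal variables. Once $\gamma$ has been identified on these variables, both cases follow by direct substitution.
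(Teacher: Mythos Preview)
Your treatment of the identity claim is essentially the paper's: induct on $m$, and for the base case use that $(\disc^n)' = \disc^n$ with $\gamma$ fixing $d^n$.

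For the composition claim there is a real gap. Your unfolding gives
\[
(\comp_\Delta)^{\op M} \;=\; \coh\bigl(\Delta' : (\comp_{\partial^-\Delta} \to \comp_{\partial^+\Delta})^{\op M}\app{\gamma}\bigr)[\gamma^{-1}],
\]
and you then analyse what $\gamma^{-1}\circ\tau^{\op M}$ does to the locally-maximal variables. But to conclude that the result is literally $t_0^{\op M}\s_k\cdots\s_k t_n^{\op M}$ (or its reverse) you also need this coherence to be $\comp_{\Delta'}$, i.e.\ you need
\[
(\comp_{\partial^-\Delta} \to \comp_{\partial^+\Delta})^{\op M}\app{\gamma} \;=\; \comp_{\partial^-\Delta'} \to \comp_{\partial^+\Delta'}.
\]
That is not a matter of tracking $\gamma$ on locally-maximal variables; it is the assertion that $\op M$ carries the \emph{boundary} composites to the composites of the boundaries of $\Delta'$, which is the lemma itself one dimension down. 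Your proposal never sets up this induction, and the obstacle you flag (lower-dimensional reorderings) is not where the difficulty lies.

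The paper avoids this by proving the cleaner general statement $(\comp_\Gamma)^{\op M} = \comp_{\Gamma'}\app{\gamma^{-1}}$ for \emph{every} pasting context $\Gamma$, by induction through the boundaries $\partial^\pm\Gamma$ and the identities $\partial^\pm(\Gamma') = (\partial^{\pm}\Gamma)'$ or $(\partial^{\mp}\Gamma)'$ according to whether $\dim\Gamma\in M$. Once that is established, the case split on $k+1\in M$ then really does reduce to the action of $\gamma^{-1}$ on the top-level branches of the linear tree, as you describe; but the type equality has to be earned first.
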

\begin{proof}
  We first prove the first claim, by proving a more general statement, that is,
  for any pasting context \(\Gamma\), we have
  \((\comp_{\Gamma})^{\op M} = \comp_{\Gamma'}\app{\gamma^{-1}}\), where
  \(\Gamma'\) is the unique pasting context isomorphic to \(\Gamma^{\op M}\) and
  \(\gamma\) is the isomorphism. We prove this by induction on \(\Gamma\). First
  when \(\Gamma = \disc^{n}\) is a disc, we have $(\disc^n)' = \disc^n$, with
  the isomorphism \(\gamma\) swapping \(d^{k}_{-}\) and \(d^{k}_{+}\) for
  every \(k \in M\) such that \(k < n\), and acting as the identity on all other variables.
  Thus, we have
  \[
    (\comp_\Gamma) ^{\op M} = d^n = d^n\app{\gamma^{-1}} =
    \comp_{\Gamma'} \app{\gamma^{-1}}
  \]
  If \(\Gamma\) is not a disc, we distinguish two cases. If
  $\dim(\Gamma) \notin M$, then we have the equality
  \(\partial^{\pm}(\Gamma') = (\partial^{\pm}\Gamma)'\)~\cite[Lemma 16]{benjamin_hom_2024}, and thus:
  \begin{align*}
    &(\comp_\Gamma)^{\op M} \\
    &= \coh(\Gamma' : (\comp_{\partial^- \Gamma})^{\op M}\app{\gamma} \to (\comp_{\partial^+ \Gamma})^{\op M}\app{\gamma})[\gamma^{-1}] \\
    &= \coh(\Gamma' : \comp_{(\partial^- \Gamma)'} \to \comp_{(\partial^+
      \Gamma)'})[\gamma^{-1}]\\
    &=  \coh(\Gamma' : \comp_{\partial^- (\Gamma')} \to \comp_{(\partial^+ \Gamma')})[\gamma^{-1}] \\
    &= \comp_{\Gamma'}\app{\gamma^{-1}}
  \end{align*}
  On the other hand, if $\dim(\Gamma) \in M$n then we have the equality
  \(\partial^{\pm}(\Gamma') = (\partial^{\mp}\Gamma)'\)~\cite[Lemma 16]{benjamin_hom_2024}, and thus:
  \begin{align*}
    &(\comp_\Gamma)^{\op M}\\
    &= \coh(\Gamma' : (\comp_{\partial^+ \Gamma})^{\op M}\app{\gamma} \to (\comp_{\partial^- \Gamma})^{\op M}\app{\gamma})[\gamma^{-1}] \\
    &= \coh(\Gamma' : \comp_{(\partial^+ \Gamma)'} \to \comp_{(\partial^-
      \Gamma)'})[\gamma^{-1}]\\
    &=  \coh(\Gamma' : \comp_{\partial^- (\Gamma')} \to \comp_{(\partial^+
      \Gamma')})[\gamma^{-1}]\\
    &= \comp_{\Gamma'}\app{\gamma^{-1}}
  \end{align*}
  As before, we use the fact the \(\partial^{\pm}\) reduces the dimension of
  pasting contexts and that the unique pasting context of dimension \(0\) is a
  disc for this induction to be well-founded.

  For the second statement, we proceed by induction on $m$. When $m = 0$, we
  have $(\id^m_t)^{\op M} = t^{\op M} = \id^0_{t^{{\op M}}}$ as required. When
  $m>0$, we have:
  \begin{align*}
    (\id^m_t)^{\op M}
    &= \coh((\disc^m)' : d^m\app{\gamma} \to d^m\app{\gamma})[(\id^{m-1}_t)^{\op
      N}] \\
     &= \coh(\disc^m : d^m \to d^m)[(\id^{m-1}_t)^{\op M}] \\
     &= \coh(\disc^m : d^m \to d^m)[\id^{m-1}_{t^{\op M}}]\\
     &= \id^m_{t^{\op M}} \tag*{\qedhere}
  \end{align*}
\end{proof}

\begin{lemma}\label{disjoint-implies-inclusion-is-identity}
  Let \(\Gamma\) be a context and \(X\subseteq\Var(\Delta)\) a set of
  maximal-dimension variables. Then for any term \(\Gamma\vdash t:A\) such that
  \(\supp(t) \cap X = \varnothing\), we have \(t\app{\inj^{\pm}} = t\). Moreover,
  for any substitution \(\Gamma\vdash\sigma:\Delta\) such that
  \(\supp(\sigma)\cap X = \varnothing\), we have
  \(\sigma\cir\inj^{\pm} = \sigma\).
\end{lemma}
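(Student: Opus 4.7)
The plan is to proceed by mutual structural induction on terms and substitutions. The key observation, which makes the entire argument go through cleanly, is that by the very definition of the inclusion substitutions recalled in the functorialisation subsection, we have $y\app{\inj^{\pm}} = y$ for every variable $y \in \Var(\Gamma)$ with $y \notin X$. Thus it suffices to show that a term or substitution whose support avoids $X$ is built syntactically only from such ``fixed'' variables, and then the substitutions $\inj^{\pm}$ act trivially on every leaf of the syntax tree.

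For the term case, I would handle variables and coherence terms. If $t = y$ is a variable with $\supp(y) \cap X = \varnothing$, then since $y \in \supp(y)$, we get $y \notin X$, and $y\app{\inj^{\pm}} = y$ by definition. If $t = \coh(\Delta : B)[\tau]$ is a coherence, then the definition of support gives $\supp(t) \supseteq \supp(\tau)$, so $\supp(\tau) \cap X = \varnothing$. Applying the inductive hypothesis to $\tau$ yields $\tau \cir \inj^{\pm} = \tau$, and hence $t\app{\inj^{\pm}} = \coh(\Delta : B)[\tau \cir \inj^{\pm}] = t$ by the definition of substitution on coherence terms from Figure~\ref{def:substitution}.

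For substitutions, the empty case $\langle\rangle \cir \inj^{\pm} = \langle\rangle$ is immediate from the definition. For an extended substitution $\sigma = \langle \sigma', y \mapsto t\rangle$, the definition of support gives $\supp(\sigma) \supseteq \supp(\sigma') \cup \supp(t)$, so both sub-supports are disjoint from $X$. The inductive hypotheses applied to $\sigma'$ and to $t$ give respectively $\sigma' \cir \inj^{\pm} = \sigma'$ and $t\app{\inj^{\pm}} = t$, so by the defining equation of composition we conclude $\sigma \cir \inj^{\pm} = \langle \sigma' \cir \inj^{\pm}, y \mapsto t\app{\inj^{\pm}}\rangle = \sigma$.

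This lemma poses no genuine obstacle; the argument is essentially a bookkeeping exercise. The hypothesis that $X$ consists of maximal-dimension variables ensures that no variable in $X$ appears in the type of any other variable, so the support of a term reduces to the variables appearing in its syntactic expression without subtle type-level contributions from $X$. This is what allows the naive induction to work, and it is the only place the maximality assumption is used.
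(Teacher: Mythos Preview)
Your proposal is correct and follows essentially the same mutual structural induction as the paper: variables not in $X$ are fixed by $\inj^{\pm}$ by definition, coherence terms reduce to the substitution case via $\supp(\tau)\subseteq\supp(t)$, and extended substitutions split into the two inductive hypotheses. Your closing remark slightly overstates the role of the maximality hypothesis---it is needed only so that $\Gamma\uparrow X$ and hence $\inj^{\pm}$ are defined, not for the induction itself---but this does not affect the argument.
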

\begin{proof}
  We prove these two results by mutual induction. For a term $t = x$ which is a
  variable, by hypothesis, $x \notin X$, so $x\app{\inj^\pm} = x$. For the term
  $\coh(\Theta : B)[\tau]$ we have that \({\supp(t) = \supp(\tau)}\), and we see thus by
  induction that:
  \begin{align*}
    t\app{\inj^\pm}=\coh(\Theta : B)[\tau \cir \inj^\pm]=t
  \end{align*}
  For the empty substitution \(\langle\rangle\), we have
  \(\langle\rangle\cir \inj^{\pm} = \langle\rangle\). For the substitution
  \(\langle\sigma,x\mapsto t\rangle\), we have \(\supp(t)\subseteq \supp(\sigma)\), thus by
  induction,
  \[
    \sub{\sigma,x\mapsto t} \cir \inj^{\pm} =
    \sub{\sigma\cir\inj^{\pm},x\mapsto t\app{\inj^{\pm}}} = \sub{\sigma,t}
    \qedhere
  \]
\end{proof}

\begin{lemma}\label{lemma:support-preserved}
  Let \(\Delta\) be a context. Then for any term \(\Delta\vdash t:A\), we have that $\supp(A)\subseteq \supp(t)$.
  Moreover, for any term \(\Delta\vdash t:A\),
  and any substitution $\Gamma \vdash \sigma : \Delta$, we have:
  \begin{align*}
    \supp(t\app{\sigma}) = \bigcup_{y \in \supp(t)}\supp(y\app{\sigma})\\
    \supp(A\app{\sigma}) = \bigcup_{y \in \supp(A)}\supp(y\app{\sigma})
  \end{align*}
\end{lemma}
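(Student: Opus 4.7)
The two parts interlock: the variable case of the substitution law invokes the first assertion, while the coherence case of the first assertion invokes the substitution law for types. The plan is therefore to prove both claims simultaneously by a well-founded induction on the syntactic structure of types, terms, and substitutions, exploiting the fact that in a well-formed context each variable's type uses only strictly earlier variables.

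For the first assertion, the variable case $t = x$ with $(x:A)\in\Delta$ is immediate from the equality $\supp_\Delta(x) = \{x\}\cup\supp_\Delta(A)$. For the coherence case $t = \coh(\Theta:B)[\tau]$ with $A = B\app{\tau}$, the variables of $\Delta$ appearing in $t$ coincide with those appearing in $\tau$, so $\supp_\Delta(t) = \supp_\Delta(\tau)$. Invoking the substitution law for types on $B$ over the smaller context $\Theta$ then yields
\[
\supp_\Delta(B\app{\tau}) \;=\; \bigcup_{y \in \supp_\Theta(B)} \supp_\Delta(y\app{\tau}) \;\subseteq\; \supp_\Delta(\tau),
\]
the inclusion holding because each $y \in \supp_\Theta(B)$ is a variable of $\Theta$ and so $\supp_\Delta(y\app{\tau}) \subseteq \supp_\Delta(\tau)$ by the very definition of the support of a substitution.

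For the substitution laws, the type case $A = \obj$ is trivial, and $A = u \to_B v$ follows from the decomposition $\supp(A) = \supp(u)\cup\supp(v)\cup\supp(B)$ together with the inductive hypothesis on each piece. For a coherence term $t = \coh(\Theta:B)[\tau]$, the identity $t\app{\sigma} = \coh(\Theta:B)[\tau\cir\sigma]$ reduces the claim to the analogous law $\supp_\Gamma(\tau\cir\sigma) = \bigcup_{y\in\supp_\Delta(\tau)}\supp_\Gamma(y\app{\sigma})$ for substitution composition, which is proved by a direct induction on the list structure of $\tau$ using the inductive hypothesis on each component term. For a variable $t = x$ with $(x:A)\in\Delta$, applying the type law to $A$ gives
\[
\bigcup_{y\in\supp_\Delta(x)}\supp_\Gamma(y\app{\sigma}) \;=\; \supp_\Gamma(x\app{\sigma}) \cup \supp_\Gamma(A\app{\sigma}),
\]
and the first assertion applied to the typing $\Gamma\vdash x\app{\sigma} : A\app{\sigma}$ absorbs the last term into the first.

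The main obstacle is purely organisational: ensuring that the joint induction is well-founded despite the cross-calls between the two assertions. The coherence case of the first assertion recurses into a type over a strictly smaller context, and the variable case of the substitution law invokes the first assertion at a typing judgement whose type $A\app{\sigma}$ descends from the strictly shorter prefix of $\Delta$ ending at $x$. A lexicographic measure on (context depth, syntactic size) makes every such call a strict descent, and the remainder of the argument is a direct unfolding of the definitions of support and substitution application from Figure~\ref{def:substitution}.
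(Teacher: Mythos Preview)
The paper does not give a proof here; it simply cites \cite[Lemma~7.3]{dean_computads_2024}. Your mutual-induction scheme is the standard one and is correct in outline.

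The one weak point is your termination measure. In the coherence case of the first assertion you recurse into the type $B$ over the pasting context $\Theta$, but $\Theta$ is not a ``smaller context'' than $\Delta$---it is an unrelated pasting context that can have arbitrarily many variables. In the variable case of the substitution law you invoke the first assertion on $\Gamma \vdash x\app{\sigma} : A\app{\sigma}$, where the context $\Gamma$ is unchanged and the term $x\app{\sigma}$ can be syntactically much larger than the variable $x$. So neither coordinate of your lexicographic pair (context depth, syntactic size) is guaranteed to decrease at these cross-calls. The fix is to induct instead on the height of the typing derivation: the \textsf{coh}-rule has $\Theta \vdash B$ among its premises, and the substitution-extension rule for $\Gamma \vdash \langle\sigma',x\mapsto u\rangle : (\Delta',x{:}A)$ has $\Gamma \vdash u : A\app{\sigma'}$ among its, so both cross-calls land on strict subderivations and the rest of your argument goes through unchanged.
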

\begin{proof} This was proved by Dean et al.~\cite[Lemma 7.3]{dean_computads_2024}.
\end{proof}

\begin{lemma}\label{includefunctorialised}
  Let $\Gamma \vdash \sigma :\Delta$ be a substitution. Let
  $X\subseteq\Var(\Gamma)$ be an up-closed set of variables of depth
  at most \(1\) in \(\Gamma\) and \(\sigma\). Then the inclusions
  $\Delta \uparrow X_\sigma \vdash \inj^\pm_\Delta : \Delta$ and
  $\Gamma \uparrow X \vdash \inj_{\Gamma}^\pm : \Gamma$ satisfy:
  \[
    \inj^\pm_\Delta \cir (\sigma \uparrow X) = \sigma \cir \inj^{\pm}_\Gamma
  \]
\end{lemma}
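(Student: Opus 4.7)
The plan is to verify the claimed equality of substitutions pointwise on each variable $y \in \Var(\Delta)$. By the definition of substitution composition, this reduces to checking the equation
\[
  (y\app{\inj^\pm_\Delta})\app{\sigma \uparrow X} = (y\app{\sigma})\app{\inj^\pm_\Gamma}
\]
for every such $y$. I would then case-split on whether $y$ lies in the preimage set $X_\sigma$, exploiting the fact that both $\inj^\pm_\Delta$ and $\sigma\uparrow X$ are defined by a dichotomy on this same condition.

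In the case $y \notin X_\sigma$, the definition of $\inj^\pm_\Delta$ gives $y\app{\inj^\pm_\Delta} = y$, and the definition of the functorialised substitution on variables outside $X_\sigma$ gives $y\app{\sigma \uparrow X} = y\app{\sigma}$, so the left-hand side collapses to $y\app{\sigma}$. The right-hand side requires one extra step: since $y \notin X_\sigma$ unfolds to $\supp(y\app{\sigma}) \cap X = \varnothing$, Lemma \ref{disjoint-implies-inclusion-is-identity} yields $(y\app{\sigma})\app{\inj^\pm_\Gamma} = y\app{\sigma}$, matching the left-hand side.

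In the case $y \in X_\sigma$, the definition of $\inj^\pm_\Delta$ gives $y\app{\inj^\pm_\Delta} = y^\pm$, so the left-hand side becomes $y^\pm\app{\sigma \uparrow X}$, which unfolds directly to $y\app{\sigma \cir \inj^\pm_\Gamma}$ by the defining equation of $\sigma\uparrow X$ on the duplicated variables. This is exactly the right-hand side after applying substitution composition in the other direction.

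I do not anticipate any serious obstacle; the result is essentially a direct unfolding of the definitions of $\sigma \uparrow X$ and $\inj^\pm$, together with a single application of Lemma \ref{disjoint-implies-inclusion-is-identity} to discharge the case where the image $y\app{\sigma}$ is untouched by $X$. The only point requiring any care is observing that the two defining dichotomies, for $\inj^\pm_\Delta$ and for $\sigma\uparrow X$, are governed by the same predicate ($y\in X_\sigma$), which is what makes the case analysis align cleanly on both sides of the equation.
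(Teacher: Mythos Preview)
Your proposal is correct and matches the paper's proof essentially step for step: both argue pointwise on variables of $\Delta$, split on whether $y\in X_\sigma$, use Lemma~\ref{disjoint-implies-inclusion-is-identity} in the $y\notin X_\sigma$ case, and unfold the definition of $\sigma\uparrow X$ on $y^\pm$ in the $y\in X_\sigma$ case.
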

\begin{proof}
  We will show that they coincide on every variable \(x\). If
  \(x \notin X_{\sigma}\), then by definition
  \[x\app{\inj_\Delta^\pm \cir \sigma\uparrow X}=x\app{\sigma}\]
  Since $\supp(x\app{\sigma})\cap X = \varnothing$, by
  Lemma~\ref{disjoint-implies-inclusion-is-identity}
  \[x\app{\sigma\cir \inj_\Gamma^\pm}=x\app{\sigma}\]
  proving the equality. If \(x\in X_{\sigma}\),
  then by definition,
  \[
    x\app{\inj_{\Delta}^{\pm} \cir(\sigma\uparrow X)} =
    x^{\pm}\app{\sigma\uparrow X} =  x\app{\sigma\cir\inj_\Gamma^\pm} \qedhere
  \]
\end{proof}

\begin{lemma}\label{funcsubs}
  Let \(\Gamma\vdash \sigma :\Delta\) a substitution between contexts of the
  same dimension, and \(X\in\Var(\Gamma)\) a set of variables of depth \(0\)
  with respect to \(\Gamma\) and \(\sigma\). Then for any term
  \(\Delta\vdash t:A\) such that \(\depth_Xt[\sigma] = 0\), we have
  \begin{align*}
    (A \uparrow^{t} X_{\sigma})\app{\sigma\uparrow X}
    &= A\app{\sigma}\uparrow^{t\app{\sigma}} X \\
    (t \uparrow X_{\sigma})\app{\sigma \uparrow X}
    & = t\app{\sigma} \uparrow X
  \end{align*}
  Similarly, for any substitution $\Delta \vdash \tau : \Theta$ such
  that \(\depth_X(\tau\cir \sigma) = 0\), we have:
  \[
    (\tau \uparrow X_\sigma) \cir (\sigma \uparrow X) = (\tau \cir \sigma) \uparrow X
  \]
\end{lemma}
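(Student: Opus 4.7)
The plan is to prove the three equalities simultaneously by mutual structural induction on the compound object being functorialised: a term, a type, or a substitution. Throughout, I would maintain as invariants that $\depth_X\sigma = 0$ and that each sub-substitution $\tau'$ occurring in the recursion inherits a corresponding depth-zero condition, so that every occurrence of $\tau'\uparrow X_{\sigma'}$ and $t'\uparrow X_{\sigma'}$ is well defined. The crucial bookkeeping property is that when $\depth_X\sigma=0$ one has $\depth_X t\app{\sigma} = \depth_{X_\sigma} t$: this follows from Lemma~\ref{lemma:support-preserved} together with the observation that $y\in X_\sigma$ forces $\dim(y)=\dim(w)$ for every $w\in\supp(y\app{\sigma})\cap X$.

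For substitutions, the equality is checked componentwise on each variable $y\in\Var(\Theta)$, splitting on membership in $X_{\tau\cir\sigma}$. If $y\notin X_{\tau\cir\sigma}$, then by definition the right-hand side $y\app{(\tau\cir\sigma)\uparrow X}$ reduces to $y\app{\tau\cir\sigma}$, while the left-hand side reduces to $y\app{\tau}\app{\sigma\uparrow X}$; these agree by Lemma~\ref{disjoint-implies-inclusion-is-identity} applied to $\sigma\uparrow X$, using $\supp(y\app{\tau})\cap X_\sigma=\varnothing$ together with Lemma~\ref{includefunctorialised}. If $y\in X_{\tau\cir\sigma}=(X_\sigma)_\tau$, then the equality must be verified on the duplicated variables $y^\pm$ and on the connecting variable $\fun{y}$ of $\Theta\uparrow X_{\tau\cir\sigma}$; the first two reduce to the preceding case via Lemma~\ref{includefunctorialised}, while the $\fun{y}$ component reduces exactly to the term-level statement of the lemma applied to $y\app{\tau}$, which is the inductive hypothesis.

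For types, the base case $A = \obj$ is immediate, and for $A = u\to_B v$ the definition of $A\uparrow^t X$ is an arrow type whose boundary data $L_{A,t,X}$ and $R_{A,t,X}$ each involve at most one of $u\uparrow X$ or $v\uparrow X$, according to whether their supports meet $X$. Each contribution reduces by the term-level induction hypothesis, and the whiskering composites assemble correctly under substitution by a routine computation. For terms, the variable case $t = x$ satisfies $x\in X_\sigma$ by the depth invariant, whence $x\uparrow X_\sigma = \fun{x}$ and $\fun{x}\app{\sigma\uparrow X} = x\app{\sigma}\uparrow X$ is exactly the definition of $\sigma\uparrow X$ on connecting variables. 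The coherence case $t = \coh(\Delta':B)[\tau]$ unfolds by definition to $(\coh(\Delta':B)\uparrow (X_\sigma)_\tau)\app{\tau\uparrow X_\sigma}$; the inductive hypothesis for the substitution $\tau$ turns $(\tau\uparrow X_\sigma)\cir(\sigma\uparrow X)$ into $(\tau\cir\sigma)\uparrow X$, delivering the claim.

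The main obstacle will be the bookkeeping around preimages in the coherence case, in particular verifying $(X_\sigma)_\tau = X_{\tau\cir\sigma}$ and checking the up-closure and depth conditions needed for the head functorialisation $\coh(\Delta':B)\uparrow (X_\sigma)_\tau$ to be defined as a full type. Each of these verifications is routine, relying only on Lemma~\ref{lemma:support-preserved} and the definitions of depth and preimage, but together they form the only combinatorially delicate part of the argument; once they are in place the rest is mechanical unfolding of definitions.
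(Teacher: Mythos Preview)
Your proposal is correct and follows essentially the same approach as the paper: a mutual structural induction on terms and substitutions, with the same case splits and the same appeals to Lemmas~\ref{includefunctorialised} and~\ref{disjoint-implies-inclusion-is-identity}, and the same observation that $(X_\sigma)_\tau = X_{\tau\cir\sigma}$.

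The one place where your plan diverges is the type case. You propose to recurse on the structure of $A$ and invoke the term-level hypothesis on $u$ and $v$ when their supports meet $X_\sigma$. The paper instead observes that this never happens: since $\depth_X\Gamma = 0$ and $\depth_X\sigma = 0$, every variable in $X_\sigma$ is top-dimensional, whereas $\supp(A)$ contains only variables of strictly lower dimension. Hence $A\uparrow^t X_\sigma$ is simply $t\app{\inj^-}\to_A t\app{\inj^+}$ with no whiskering terms, and the type equality follows in one line from Lemma~\ref{includefunctorialised} together with $A\app{\sigma\uparrow X} = A\app{\sigma}$. Your recursive plan is not wrong---the whiskering branches are vacuous---but it obscures this simplification, and the phrase ``each contribution reduces by the term-level induction hypothesis'' is misleading: if $\supp(u)$ did meet $X_\sigma$ then $u$ would have depth $-1$, not $0$, and the term-level statement as written would not apply.
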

\begin{proof}
  The equality of types is a consequence of Lemma~\ref{includefunctorialised},
  along with the fact that since \(A\) has disjoint support from \(X\), we have
  \(A\app{\sigma\uparrow X} = A\app{\sigma}\):
  \begin{align*}
    (A \uparrow^{t}X_{\sigma})&\app{\sigma\uparrow X} \\
    &= t\app{\inj^{-}\cir(\sigma \uparrow
      X)}\to_{A\app{\sigma}}t\app{\inj^{+}\cir(\sigma\uparrow X)}\\
    &= t\app{\sigma\cir \inj^{-}}\to_{A\app{\sigma}}
      t\app{\sigma\cir\inj^{+}}\\
    &= (A\app{\sigma}\uparrow^{t\app{\sigma}} X)
  \end{align*}

  We prove the equalities on terms and substitution by mutual induction. For a
  term $t=x$ which is a variable, if $ x \in X_\sigma$ then by definition:
  \[(x \uparrow X_\sigma)\app{\sigma\uparrow X} = x\app{\sigma}\uparrow X\]
  If $x \notin X_\sigma$ then
  \[(x \uparrow X_\sigma) \app{\sigma\uparrow X} = x\app{\sigma}\]
  whereas and since \(x\app{\sigma}\cap X = \varnothing\), we also have
  \[x\app{\sigma} \uparrow X = x\app{\sigma}\]
  For the term
  $\coh(\Theta : B)[\tau]$, then if $X_{\tau \cir \sigma} \neq \varnothing$ we
  have by induction, denoting \(u = \coh(\Theta : B)[\id_{\Theta}\):
  \begin{align*}
    (&t \uparrow X_\sigma)\app{\sigma \uparrow X}\\
     &= \coh(\Theta \uparrow (X_{\sigma})_{\tau} : u[\inj^-] \to u[\inj^+])[(\tau \uparrow X_\sigma) \cir (\sigma \uparrow X)] \\
     &= \coh(\Theta \uparrow (X_{\tau\cir\sigma}) : u[\inj^-] \to u[\inj^+])[(\tau \cir \sigma) \uparrow X)]\\
     &= t\app{\sigma} \uparrow X
  \end{align*}
  If $X_{\tau \cir \sigma} = \varnothing$, by
  Lemma~\ref{lemma:support-preserved} we have
  $\supp(t\app{\sigma})\cap X = \varnothing$. Then
  by induction together with
  Lemmas~\ref{disjoint-implies-inclusion-is-identity}
  and~\ref{includefunctorialised}, we have:
  \begin{align*}
    (t \uparrow X_\sigma)\app{\sigma \uparrow X}
    &= t\app{\sigma \uparrow X} \\
    &= t\app{\inj^\pm_\Delta \cir \sigma \uparrow X} \\
    & = t\app{\sigma \cir \inj^\pm_\Gamma} \uparrow X\\
    &= t\app{\sigma}
  \end{align*}
  For the empty substitution $\sub{}$, we have:
  $$(\langle\rangle \uparrow X_\sigma) \cir (\sigma \uparrow X) =
  \sub{} = (\sub{} \cir \sigma)\uparrow X$$
  For substitutions of the form
  $\sub{\tau, x\mapsto t}$, if $x \in X_{\tau \cir \sigma}$, we have, by
  induction and Lemma~\ref{includefunctorialised}:
  \begin{align*}
    &(\sub{\tau,x\mapsto t}\uparrow X_\sigma)
      \cir (\sigma \uparrow X) \\
    &= \left\langle \begin{array}{l}
      (\tau\uparrow X_\sigma)\cir(\sigma \uparrow X), x^{\pm}\mapsto t\app{\inj^{\pm}\cir(\sigma \uparrow X)}, \\
      \fun{x}\mapsto (t\uparrow X_\sigma)\app{\sigma \uparrow X}
    \end{array}\right\rangle \\
    &= \left\langle \begin{array}{l}
      (\tau \cir \sigma)\uparrow X, x^{\pm}\mapsto t\app{\inj^{\pm}\cir(\sigma \uparrow X)}, \\
      \fun{x}\mapsto (t\uparrow X_\sigma)\app{\sigma \uparrow X}
    \end{array}\right\rangle \\
    &= \sub{(\tau \cir \sigma)\uparrow X, x^{\pm}\mapsto t\app{\sigma \cir
      \inj^\pm},\fun{x}\mapsto (t\uparrow X_\sigma)\app{\sigma \uparrow X}}\\
    &= \sub{(\tau \cir \sigma)\uparrow X, x^{\pm}\mapsto t\app{\sigma \cir
      \inj^\pm},\fun{x}\mapsto t\app{\sigma}\uparrow X} \\
    &= (\sub{\tau,x\mapsto t} \cir \sigma)\uparrow X
  \end{align*}
  On the other hand, if \(x\notin X_{\tau\cir\sigma}\), we have by induction,
  \begin{align*}
    \sub{&\tau,x\mapsto t}\uparrow X_{\sigma} \cir(\sigma\uparrow X)\\
         &= \sub{\tau\uparrow X_{\sigma} \cir (\sigma\uparrow X)
           ,x\mapsto t\app{\sigma\uparrow X}} \\
         &= \sub{(\tau\cir\sigma)\uparrow X,x \mapsto t\app{\sigma\uparrow X}}\\
         &= (\sub{\tau,x\mapsto t}\cir\sigma) \uparrow X \tag*{\qedhere}
  \end{align*}
\end{proof}

\begin{lemma}\label{lemma:suspfunc-context}
  For every context \(\Gamma\vdash\) and every \(X\in\U(\Gamma)\) such that \mbox{\(\depth_{X}(\Gamma)=0\)}:
    \begin{align*}
      \Sigma(\Gamma \uparrow X) &= (\Sigma \Gamma) \uparrow X \\
      \Sigma \inj^\pm_{\Gamma, X} &= \inj^{\pm}_{\Sigma \Gamma, X}
    \end{align*}
\end{lemma}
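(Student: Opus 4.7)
The plan is to establish both equations simultaneously by structural induction on $\Gamma$. The base case $\Gamma = \varnothing$ forces $X = \varnothing$, so both sides of the first equation reduce to $(N:\obj, S:\obj)$, and both inclusion substitutions are the identity substitution $\langle N\mapsto N, S\mapsto S\rangle$.

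For the inductive step I will write $\Gamma = (\Gamma', x:A)$ and split on whether $x \in X$. If $x \notin X$, then $X \subseteq \Var(\Gamma')$ is still up-closed and of depth zero in $\Gamma'$, so the definitions of functorialisation and suspension on contexts all immediately propagate the inductive hypothesis, with the untouched variable $x : \Sigma A$ appended on both sides.

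The interesting case is $x \in X$, where I will set $X' = X \setminus \{x\}$; a short verification using the globular structure shows $X'$ remains up-closed in $\Gamma'$ and has depth zero, and the same holds for $X$ viewed inside $\Sigma \Gamma$. The key observation is that the hypothesis $\depth_X(\Gamma) = 0$ forces $\supp(A) \cap X = \varnothing$: if a variable $y \in X$ occurred in $A$, then $y$ would appear in $\supp(x)$ with $\dim y < \dim x$, violating the depth-zero condition. Consequently the formula for $A \uparrow^x X$ collapses to $x^- \to_A x^+$, and the same is true for $\Sigma A \uparrow^x X = x^- \to_{\Sigma A} x^+$ after observing that $N, S \notin X$ gives $\supp(\Sigma A) \cap X = \varnothing$. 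Combining this with the inductive hypothesis on $(\Gamma', X')$ makes both extensions of the context agree term by term.

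The second equation follows by a direct unfolding of the definitions: on the suspension variables $N, S$ both sides act as the identity, and on each $y \in \Var(\Gamma)$ one has $y\app{\inj^\pm}$ equal to either $y$ or $y^\pm$ according to whether $y \in X$, both of which are plain variables and therefore fixed by $\Sigma$. The main subtlety throughout, and the only nontrivial point in the argument, is the bookkeeping of up-closedness and the depth-zero condition when restricting from $\Gamma$ to $\Gamma'$ and when transporting $X$ along the suspension; once these are verified, the rest of the proof is entirely structural.
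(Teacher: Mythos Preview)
Your proposal is correct and follows essentially the same approach as the paper: structural induction on $\Gamma$, with the same case split on $x \in X$ and the same variable-by-variable verification of the inclusion substitutions. You are in fact more explicit than the paper about why $A\uparrow^x X$ collapses to $x^-\to_A x^+$ and about the bookkeeping of up-closedness and depth under restriction and suspension, points the paper takes for granted.
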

\begin{proof}
  We proceed by structural induction on $\Gamma$. For the empty context
  $\emptycontext$, we have:
  \[
    \Sigma(\emptycontext \uparrow \varnothing) = (\Sigma \emptycontext)\uparrow \varnothing
    = (N : \obj, S : \obj).
  \]
  For the context $(\Gamma,x: A)$, denote \(X' = X\setminus\{x\}\). Then, if
  $x \in X$ we have:
  \begin{align*}
    \Sigma ((\Gamma,x:A)\uparrow X)
          & =\Sigma (\Gamma \uparrow X', x^\pm : A, \fun{x} : x^- \to_{A} x^+) \\
          & = (\Sigma(\Gamma\uparrow X'), x^\pm : \Sigma{A}, \fun{x} : x^-
            \to_{\Sigma A} x^+) \\
          & = ((\Sigma\Gamma)\uparrow X', x^\pm : \Sigma{A}, \fun{x} : x^-
            \to_{\Sigma A} x^+) \\
          & = \Sigma(\Gamma, x : A) \uparrow \Sigma X
  \end{align*}
  On the other hand, if $x \notin X$, we have:
  \begin{align*}
    \Sigma((\Gamma, x : A) \uparrow X)
    &= \Sigma(\Gamma \uparrow X', x : A) \\
    &= (\Sigma(\Gamma \uparrow X'), x : \Sigma A)\\
    &= ((\Sigma \Gamma) \uparrow X', x : \Sigma A)\\
    &= \Sigma(\Gamma, x : A) \uparrow \Sigma X
  \end{align*}
  For the second statement, consider a variable \(x\) of \(\Gamma\).
  If \(x\notin X\), then we have:
  \[
    x\app{\inc^{\pm}_{\Sigma\Gamma,X}} = x = \Sigma(x\app{\inc^{\pm}_{\Gamma,X}}).
  \]
  If \(x\in X\), then:
   \[
    x\app{\inc^{\pm}_{\Sigma\Gamma,X}} = x^{\pm} = x\app{\Sigma\inc^{\pm}_{\Gamma,X}}.
  \]
  Finally, since \(N,S \notin X\), we have:
  \begin{align*}
    N\app{\inc^{\pm}_{\Sigma\Gamma,X}} &= N = N\app{\Sigma\inc^{\pm}_{\Gamma,X}}\\
    S\app{\inc^{\pm}_{\Sigma\Gamma,X}} &= S = S\app{\Sigma\inc^{\pm}_{\Gamma,X}}\\
  \end{align*}
  The two substitutions thus coincide on all variables and therefore are equal.
\end{proof}

\begin{lemma}\label{lemma:suspfunc-term+subs}
  For every context \(\Gamma\) and every \(X\in\U(\Gamma)\) such that \(\depth_{X}(\Gamma)=0\),
  the following hold:
  \begin{itemize}
  \item For any term \(\Gamma\vdash t:A\) such that \(\depth_{X}(t) = 0\), we
    have:
    \[
      \Sigma(t \uparrow X) = (\Sigma t) \uparrow X
    \]
  \item For any substitution $\Gamma \vdash \sigma : \Delta$ such that
    \(\depth_{X}(\sigma) = 0\), we have:
    \[
      \Sigma(\sigma \uparrow X) = (\Sigma \sigma) \uparrow X
    \]
  \item For any term \(\Gamma\vdash t:A\) such that \(\depth_{X}(t) = 1\), we
    have:
    \[
      \Sigma(A \uparrow^{t} X) = (\Sigma A) \uparrow^{\Sigma t} X
    \]
  \end{itemize}
\end{lemma}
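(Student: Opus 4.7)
The three statements are tightly coupled through the recursive definitions of $\Sigma$ and $\uparrow$: the suspension of a coherence term involves suspending its underlying context and type, while functorialisation of a coherence involves the analogous operation on the same inner data. I would therefore prove all three bullets simultaneously by mutual structural induction on terms, substitutions, and the underlying type. Throughout, it is worth noting that $X \subseteq \Var(\Gamma)$ remains up-closed in $\Sigma\Gamma$ with $\depth_X(\Sigma\Gamma) = \depth_X(\Gamma) = 0$, since $N, S \notin X$ and the suspension only enlarges supports by adding $N, S$, which are not in $X$.

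For the variable case of the first bullet, $\depth_X(x) = 0$ forces $x\in X$, so $x\uparrow X = \fun{x}$; since suspension acts as the identity on variable names, both $\Sigma(x\uparrow X)$ and $(\Sigma x)\uparrow X$ equal $\fun{x}$, interpreted as a variable of $\Sigma(\Gamma\uparrow X) = (\Sigma\Gamma)\uparrow X$ via Lemma~\ref{lemma:suspfunc-context}. The coherence case $t = \coh(\Delta:B)[\tau]$ is the technical heart. Unfolding the definition of term functorialisation and pushing suspension inside substitution application by Lemma~\ref{lemma:susp-results}, one obtains
\[
  \Sigma(t\uparrow X) = \Sigma\bigl(\coh(\Delta:B)\uparrow X_\tau\bigr)\app{\Sigma(\tau\uparrow X)}.
\]
The induction hypothesis on substitutions rewrites $\Sigma(\tau\uparrow X)$ as $(\Sigma\tau)\uparrow X$, and one checks that $X_{\Sigma\tau} = X_\tau$ because $\Sigma\tau$ fixes $N, S$ and otherwise matches $\tau$ up to suspension. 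What remains is the identity $\Sigma(\coh(\Delta:B)\uparrow X_\tau) = \coh(\Sigma\Delta:\Sigma B)\uparrow X_\tau$. Expanding the explicit ``full type'' recipe
\[
  \coh(\Delta:B)\uparrow Y = \coh(\Delta\uparrow Y : B\uparrow^{\coh(\Delta:B)[\id]} Y)[\id]
\]
and applying Lemma~\ref{lemma:suspfunc-context} to the context component, the inductive hypothesis (third bullet) applied to $B$ for the type component, and $\Sigma\id = \id$ from Lemma~\ref{lemma:susp-results} for the identity substitution, yields precisely the right-hand side.

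The substitution cases are routine: the empty substitution case follows because $X_{\Sigma\langle\rangle} = \varnothing$, so both sides reduce to $\langle N\mapsto N, S\mapsto S\rangle$, while the extension case $\langle\tau, x\mapsto t\rangle$ is handled by splitting on whether $x\in X_\sigma$ and applying the inductive hypotheses to $\tau$ and $t$ together with Lemma~\ref{includefunctorialised} for the behaviour of $\inj^\pm$ under precomposition. The third bullet is a short case analysis on whether $\supp(u)$ and $\supp(v)$ meet $X$ in the decomposition $A = u\to_B v$: in every case $L_{A,t,X}$ and $R_{A,t,X}$ are built out of applications of $t$ to $\inj^\pm$ and whiskerings by $u\uparrow X$ or $v\uparrow X$, whose suspensions are handled respectively by Lemma~\ref{lemma:susp-results} (combined with $\Sigma\inj^\pm = \inj^\pm$ from Lemma~\ref{lemma:suspfunc-context}), Lemma~\ref{suspcomp} for the whiskering, and the inductive hypothesis of the first bullet applied to $u$ and $v$.

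The main obstacle is the coherence case, and specifically the sub-identity $\Sigma(\coh(\Delta:B)\uparrow Y) = \coh(\Sigma\Delta:\Sigma B)\uparrow Y$. This is the point at which all three bullets of the mutual induction interact: the context part needs Lemma~\ref{lemma:suspfunc-context}, the type part needs the type bullet, and chaining everything back through substitution application needs the substitution bullet. The only mild subtlety beyond bookkeeping is that when the inner coherence has $\depth_Y(\Delta) = 1$ rather than $0$, the functorialised coherence is the non-trivial construction from Benjamin et al.~\cite{benjamin_naturality_2025} rather than the closed-form expression above; for this the same strategy applies using the explicit construction of that paper, which is itself built from suspendible primitives, so suspension commutes with it by an entirely analogous induction on the construction.
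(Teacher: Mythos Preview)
Your approach is essentially the paper's: mutual structural induction for the first two bullets, and a case analysis on the boundary for the third. But you over-complicate the coherence case. Since the lemma assumes $\depth_X(t) = 0$, when $t = \coh(\Delta:B)[\tau]$ every variable in the preimage $X_\tau$ is maximal-dimensional in $\Delta$ (because $\dim y = \dim(y[\tau]) \geq \dim x = \dim t = \dim\Delta$ for any $x\in X$ in its support), so $\depth_{X_\tau}(\Delta)=0$ always. In that regime the functorialised inner type collapses to the simple form $s[\inj^-]\to s[\inj^+]$ with $s=\coh(\Delta:B)[\id]$, with no whiskering terms. The coherence case is therefore dispatched directly by Lemma~\ref{lemma:suspfunc-context} (for both the context $\Delta\uparrow X_\tau$ and the inclusions $\inj^\pm$) together with the inductive hypothesis on $\tau$; you do not need the third bullet here, and your final paragraph about the depth-$1$ naturality construction of~\cite{benjamin_naturality_2025} addresses a case that cannot arise.

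Correspondingly, the paper does not put the third bullet into the mutual induction at all: it is proved afterwards, invoking the already-established first bullet for the boundary terms $u$ and $v$. One small correction: in the substitution extension case the identity you need for the $x^\pm$ components is $\Sigma(t\app{\inj^\pm_{\Gamma,X}}) = (\Sigma t)\app{\inj^\pm_{\Sigma\Gamma,X}}$, which follows from Lemma~\ref{lemma:susp-results} together with $\Sigma\inj^\pm = \inj^\pm$ from Lemma~\ref{lemma:suspfunc-context}, not from Lemma~\ref{includefunctorialised}.
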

\begin{proof}
  We prove the first two statements together by mutual induction. If $t = x$ is
  a variable in \(X\), then:
  \[
    \Sigma(x \uparrow X) = \fun{x} = (\Sigma x) \uparrow \Sigma X
  \]
  If \(t = \coh_{\Delta,B}[\sigma]\) and $\sigma^{-1} X = \emptycontext$,
  then:
 \[
   \Sigma(t \uparrow X) = \Sigma t = (\Sigma t) \uparrow \Sigma X
 \]
 If $\sigma^{-1}X \neq \emptycontext$, since \(N,S\notin X\), we have that
 \(\sigma^{-1} X = (\Sigma\sigma)^{-1} X\). Denoting \(Y = \sigma^{-1} X\) and
 \(u = \coh_{\Delta,B}[\id]\), we have by Lemma~\ref{lemma:suspfunc-context}:
 \begin{align*}
   \Sigma(t\uparrow X)
   &= \coh_{\Sigma (\Delta \uparrow Y), (\Sigma
     u)\app{\Sigma\inj_{\Delta,Y}^-} \to (\Sigma u)\app{\Sigma\inj_{\Delta,Y}^+}}
     [\Sigma(\sigma \uparrow X)] \\
   &= (\Sigma t) \uparrow \Sigma X
 \end{align*}
 For the second statement, for the empty substitution $\sub{}$, we have, since
\mbox{\(N,S\notin X\):}
 \[
   \Sigma(\sub{}\uparrow X) = \sub{N\mapsto N, S\mapsto S} = (\Sigma \sub{})
   \uparrow X
 \]
 For the substitution $\sub{\sigma, x \mapsto t}$, if $x \notin X$, we have:
 \begin{align*}
   \Sigma(\sub{\sigma, x\mapsto t} \uparrow X)
   &=\sub{\Sigma(\sigma\uparrow X), x \mapsto \Sigma t}\\
   &= \sub{(\Sigma \sigma)\uparrow  X, x \mapsto \Sigma t}\\
   &= (\Sigma \sub{\sigma,x\mapsto t}) \uparrow  X
 \end{align*}
 On the other hand, if $x \in X$, then by the inductive hypothesis, by
 Lemma~\ref{lemma:suspfunc-context}, and by the following equation~\cite[Lemma~71]{benjamin_type_2020}
 \[\Sigma(t\app{\inj_{\Gamma,X}^{\pm}}) = (\Sigma
 t)\app{\Sigma\inj_{\Gamma,X}^{\pm}}\]
 we may compute that:
 \begin{align*}
   \Sigma&(\sub{\sigma, x\mapsto t} \uparrow X)\\
   &= \sub{\Sigma(\sigma \uparrow X), x^\pm \mapsto
     \Sigma(t\app{\inj_{\Gamma,X}^{\pm}}), \fun{x}\mapsto \Sigma(t \uparrow X)}\\
   & = \sub{(\Sigma \sigma)\uparrow X, x^\pm \mapsto \Sigma(t\app{\inj_{\Gamma,X}^\pm}),
     \fun x \mapsto (\Sigma t)\uparrow X} \\
   &=\sub{(\Sigma \sigma)\uparrow \Sigma X, x^\pm \mapsto (\Sigma t)
     \app{\inj_{\Sigma \Gamma,X}^\pm}, \fun{x} \mapsto (\Sigma t)\uparrow
     X}\\
   &= (\Sigma \sub{\sigma,x\mapsto t}) \uparrow X
 \end{align*}

 Finally, for the last statement, write \(A = u \to v\) and \(n = \dim A\).
 If
 \(\Var(v)\cap X = \varnothing\) then the source of \(\Sigma (A\uparrow^{t} X)\) is
 given by \(\Sigma (t\app{\inl_{\Gamma,X}})\). On the other hand, the source of
 \((\Sigma A)\uparrow^{\Sigma t} X\) is \((\Sigma t)\app{\inl_{\Sigma\Gamma,X}}\).
 Again by Lemma~\ref{lemma:suspfunc-context} and the same equality as above, we
 may deduce that the two sources agree. If
 \(\Var{(v)}\cap X = \varnothing\), then the source of \(\Sigma (A\uparrow^{t} X)\)
 is \(\Sigma ((t\app{\inl_{\Gamma,X}}) \s_{n} (v\uparrow X))\), while the source of
 \((\Sigma A)\uparrow^{\Sigma t} X\) is
 \((\Sigma t)\app{\inl_{\Sigma\Gamma,X}} \s_{n+1} ((\Sigma v)\uparrow X)\). By the
 first part of the lemma and the same reasoning as in the previous case, we see
 that the two sources agree. A similar argument shows that the target are also
 equal, proving that the two types coincide.
\end{proof}

\begin{lemma}\label{opfuncinclusion}
  Let $\Gamma$ be a \(n\)\=/dimensional context and ${X\subseteq\Var(\Gamma)}$ a set of
  variables of dimension $n$. Then for any
  $M \subseteq \mathbb{N}_{>0}$, there exists an isomorphism:
  \begin{equation}\label{context-op-func}
        \op^{\uparrow}_{\Gamma,X,M} :  (\Gamma \uparrow X)^{\op M}
        \overset{\sim}{\to}
        (\Gamma^{\op M})\uparrow X
  \end{equation}
  Moreover, the source and target inclusions
  $\Gamma \uparrow X \vdash \inj_\Gamma^\pm : \Gamma$ and
  $\Gamma^{\op M}\uparrow X \vdash \inj^\pm_{\Gamma^{\op M}}:
  \Gamma^{\op M}$ satisfy
  \begin{equation}\label{inclusion-op-func}
    (\inj_\Gamma^\pm)^{\op M} =
    \begin{cases}
      \inj^\pm_{\Gamma^{\op M}}\cir\op^{\uparrow}_{\Gamma,X,M} & n + 1 \notin M \\
      \inj^\mp_{\Gamma^{\op M}}\cir\op^{\uparrow}_{\Gamma,X,M} & n + 1 \in M \\
    \end{cases}
  \end{equation}
  If $\Gamma$ is a pasting context, denote \(\Gamma'\) the unique pasting
  context isomorphic to \(\Gamma^{\op M}\) and
  $\Gamma' \vdash \gamma_\Gamma : \Gamma^{\op M}$ the associated isomorphism.
  Similarly, denote by \((\Gamma\uparrow X)'\) the unique pasting context isomorphic to
  \((\Gamma \uparrow X)^{\op M}\) and denote by
  ${(\Gamma\uparrow X)'\ \vdash \gamma_{\Gamma \uparrow X} : \Gamma^{op
    M}\uparrow X^{\op M}}$ the associated isomorphism, then:
  \begin{align}
    (\Gamma \uparrow X)'
    &= \Gamma' \uparrow X\label{derived-context-op-func}\\
    \gamma_{\Gamma} \uparrow X
    &= \op^{\uparrow}_{\Gamma,X,M} \cir
      \gamma_{\Gamma \uparrow X}\label{eq:triangle-op-func}\\
    \gamma^{-1}_\Gamma \cir (\inj_{\Gamma^{\op M}}^\pm) \cir
    \gamma_{\Gamma\uparrow X}
    &= \begin{cases}
      \inj^\pm_{\Gamma'} & n + 1 \notin M \\
      \inj^\mp_{\Gamma'} & n + 1 \in M \\
    \end{cases}\label{derived-inclusion-op-func}
  \end{align}
\end{lemma}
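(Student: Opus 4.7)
The plan is to define the isomorphism $\op^{\uparrow}_{\Gamma,X,M}$ explicitly by induction on $\Gamma$ and then verify each claim in turn. For the base case $\Gamma = \varnothing$ we necessarily have $X = \varnothing$, both sides of~\eqref{context-op-func} are the empty context, and every claim holds trivially. In the inductive step for $(\Gamma_0, y : B)$, I would split on whether $y \in X$. If $y \notin X$, functorialisation merely appends $y : B$ and the isomorphism extends by $y \mapsto y$. If $y \in X$ with $\dim y = n$, functorialisation appends $y^-, y^+ : B$ and $\fun{y} : B\uparrow^y X$; on these I would set
\begin{align*}
y^\pm\app{\op^{\uparrow}_{\Gamma,X,M}} &:= \begin{cases} y^\pm & \text{if } n+1 \notin M \\ y^\mp & \text{if } n+1 \in M \end{cases} \\
\fun{y}\app{\op^{\uparrow}_{\Gamma,X,M}} &:= \fun{y},
\end{align*}
so that the direction swap on the connecting $(n{+}1)$-cell induced by $(-)^{\op M}$ is reconciled with the fixed orientation of $\fun{y}$ on the target side.

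Well-typedness then reduces to showing that under this substitution, the type of $\fun{y}$ in $(\Gamma\uparrow X)^{\op M}$, namely $(B\uparrow^y X)^{\op M}$, agrees with its type in $\Gamma^{\op M}\uparrow X$, namely $B^{\op M}\uparrow^y X$. I would prove this by a subsidiary induction on $B$, using Lemma~\ref{opsub} and unpacking the definitions of $L_{A,t,X}$ and $R_{A,t,X}$ case by case: when $n+1 \in M$ the outer direction flip swaps $L$ and $R$, and the swap $y^- \leftrightarrow y^+$ in $\op^{\uparrow}_{\Gamma,X,M}$ compensates, while lower-dimensional direction flips (for $k \in M$ with $k \le n$) are absorbed by the inductive hypothesis on $\Gamma_0$. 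Equation~\eqref{inclusion-op-func} is then verified pointwise on variables, splitting on membership in $X$ and on whether $n+1 \in M$; it is essentially built into the definition of $\op^{\uparrow}_{\Gamma,X,M}$.

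For the pasting-context claims I would first observe that, since $\depth_X \Gamma = 0$, the set $X$ consists of locally-maximal variables, and by the naturality framework of~\cite{benjamin_naturality_2025} the functorialisation $\Gamma\uparrow X$ is itself a pasting context. The reordering operation $(-)'$ that normalises opposites of pasting contexts commutes with $\uparrow X$, since $\uparrow X$ inserts the new variables $y^\pm, \fun{y}$ in positions determined solely by the original position of $y$; this yields~\eqref{derived-context-op-func}. The triangle~\eqref{eq:triangle-op-func} then follows from uniqueness of the isomorphism between a context and its pasting-context normal form, and~\eqref{derived-inclusion-op-func} is a direct consequence of~\eqref{eq:triangle-op-func} combined with~\eqref{inclusion-op-func}.

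The main obstacle will be the type-matching argument for $\fun{y}$, since $(B\uparrow^y X)^{\op M}$ and $B^{\op M}\uparrow^y X$ are both defined by nested case splits (on whether $\supp(u)\cap X$ and $\supp(v)\cap X$ are empty when $B = u\to_C v$), and the equality must be established uniformly across all branches. Moreover, whiskering constituents such as $v \uparrow X$ appear inside $L$ and $R$, so Lemma~\ref{opcomp} is needed to move the opposite past these composites. Coordinating the swap $y^- \leftrightarrow y^+$ at the top with the direction-flipping behaviour of $(-)^{\op M}$ at each dimension $k \in M$ is where I expect the technicalities to concentrate.
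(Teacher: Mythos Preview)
Your approach is essentially the same as the paper's, both in the inductive definition of $\op^{\uparrow}_{\Gamma,X,M}$ and in deriving the pasting-context claims from uniqueness of isomorphisms out of pasting contexts. However, you significantly overestimate the difficulty of the type-matching step for $\fun{y}$. Since $X$ consists only of variables of dimension $n = \dim\Gamma$, the type $B$ of any $y \in X$ has dimension $n-1$, and therefore $\supp(B)\cap X = \varnothing$ automatically. Consequently $B\uparrow^y X$ is simply $y^- \to_B y^+$: the support conditions in the definition of $L_{B,y,X}$ and $R_{B,y,X}$ are never triggered, and no whiskering components appear. The required equality then reduces to $(y^- \to_B y^+)^{\op M}\app{\op^{\uparrow}_{\Gamma,X,M}} = y^- \to_{B^{\op M}} y^+$, which is immediate from your definition of $\op^{\uparrow}$ (the swap $y^\pm \mapsto y^\mp$ when $n+1 \in M$ is precisely what is needed). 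The paper's proof exploits this and spends essentially no effort on the step you flagged as the main obstacle; there is no subsidiary induction on $B$, no nested case analysis, and no appeal to Lemma~\ref{opcomp}.
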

\begin{proof}
  Before proving the lemma, we note that \(\Gamma\uparrow X\) is a pasting
  context~\cite[Lemma~87]{benjamin_type_2020}, so \((\Gamma\uparrow X)'\) exists.
  We first prove the isomorphism \eqref{context-op-func} by structural induction
  on \(\Gamma\). For the empty context \(\varnothing\), we have:
  \[
    (\varnothing \uparrow X)^{\op M} = \varnothing = \varnothing^{\op M}\uparrow
    X
  \]
  For contexts of the form $(\Gamma,x: A)$, if $x \notin X$, we have:
  \begin{align*}
    ((\Gamma, x : A)\uparrow X)^{\op M}
     &=((\Gamma\uparrow X)^{\op M}, x : A^{\op M}) \\
     &= (\Gamma^{\op M} \uparrow X, x :
       A^{\op M})\\
     &= (\Gamma, x : A)^{\op M} \uparrow X
  \end{align*}
  If $x \in X$, and $n + 1 \in M$, then we have:
    \begin{align*}
      (&(\Gamma, x : A)\uparrow X)^{\op M} \\
       &= ((\Gamma\uparrow(X\setminus\{x\}))^{\op M}, x^\pm : A^{\op M}, \fun{x} : x^+ \to x^-)
    \end{align*}
    Thus we define the isomorphism:
    \begin{gather*}
      \op^{\uparrow}_{\Gamma,X,M} :
      ((\Gamma,x:A)\uparrow X)^{\op M} \to (\Gamma\uparrow,x:A)^{\op M} \\
      \sub{\op^{\uparrow}_{\Gamma,X\setminus\{x\},N},x^{\pm} \mapsto
        x^{\mp},\fun x\mapsto \fun x }
    \end{gather*}
    If \(x\in X\) and $n+1\notin M$, then we have:
    \begin{align*}
      (&(\Gamma, x : A)\uparrow X)^{\op M} \\
       &= ((\Gamma\uparrow(X\setminus\{x\}))^{\op M}, x^\pm : A^{\op
         N}, \fun{x} : x^- \to x^+)
    \end{align*}
    Thus we define the isomorphism:
    \begin{gather*}
      \op^{\uparrow}_{\Gamma,X,M} :
      ((\Gamma,x:A)\uparrow X)^{\op M} \to (\Gamma\uparrow,x:A)^{\op M} \\
      \sub{\op^{\uparrow}_{\Gamma,X\setminus\{x\},N},x^{\pm} \mapsto
        x^{\pm},\fun x\mapsto \fun x }
    \end{gather*}
    We then prove \eqref{inclusion-op-func} by showing that the two
    substitutions coincide on every variable. Let \(x \in \Var(\Gamma)\),
    if $x \notin X$, then we have:
    $$ x\app{(\inj^\pm_{\Gamma})^{\op M}}= x
    = x\app{\inj^\pm_{\Gamma^{\op M}}\cir\op^{\uparrow}_{\Gamma,X,M}}$$
    If $x \in X$, and \(n+1\in M\), then we have:
    \[
      x\app{(\inj^\pm_{\Gamma})^{\op M}} = x^\pm =x \app{\inj^\pm_{\Gamma^{\op
            N}}\op^{\uparrow}_{\Gamma,X,M}}
    \]
    If $x \in X$, and \(n+1\notin M\), then we have:
    \[
      x\app{(\inj^\pm_{\Gamma})^{\op M}} = x^\mp =x \app{\inj^\pm_{\Gamma^{\op
            N}}\op^{\uparrow}_{\Gamma,X,M}}
    \]

    Equation~\eqref{derived-context-op-func} follows from
    \eqref{context-op-func} by uniqueness, since \(\Gamma' \uparrow X\) is a
    pasting context isomorphic to
    \(\Gamma^{\op M}\uparrow X = (\Gamma\uparrow X)^{\op M}\).
    Equation~\eqref{eq:triangle-op-func} is then a consequence of this equality,
    obtained by noticing that both \(\gamma_{\Gamma}\uparrow X\) and
    \(\op^{\uparrow}_{\Gamma,X,M}\cir \Gamma\uparrow X\) are isomorphism whose
    source is the pasting context \((\Gamma \uparrow X)'\). Thus there must be
    equal since pasting context have no non-trivial
    automorphisms~\cite{finster_typetheoretical_2017}. Finally,
    Equation~\eqref{derived-inclusion-op-func} follows from
    \eqref{inclusion-op-func} at the level of variables, since by definition the
    substitutions \(\gamma_{\Gamma}\) and \(\gamma_{\Gamma\uparrow X}\) act as
    the identity on every variable. The substitution have the same source and
    target and coincide on every variable, thus, they are equal.
\end{proof}

\begin{lemma} \label{opfunc}
  Let $\Gamma$ be a context,
  \(X\subseteq\Var(\Gamma)\) a set of maximal-dimensional variables of
  \(\Gamma\) and \(M \subseteq \N_{>0}\).
  For any term $\Gamma\vdash t:A$ such
  that \(\depth_X(t) = 0\), we have that:
  \begin{align*}
    (A\uparrow^{t} X)^{\op M}\app{\op^{\uparrow}_{\Gamma,X,M}} &= A^{\op M}
    \uparrow^{t^{\op M}} X\\
    (t\uparrow X)^{\op M}\app{\op^{\uparrow}_{\Gamma,X,M}} &= t^{\op M} \uparrow X
  \end{align*}
  Moreover for any substitution $\Gamma \vdash \sigma : \Delta$ such that
  \(\depth_X(\sigma) \le 0\), we have:
  \begin{align*}
    (\sigma \uparrow X)^{\op M}\cir \op^{\uparrow}_{\Gamma,X,M}
    &= (\op^{\uparrow}_{\Delta,X_{\sigma},N})^{-1} \cir \sigma^{\op M} \uparrow X
  \end{align*}
\end{lemma}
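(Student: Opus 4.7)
The plan is to prove the three equalities by mutual structural induction on the term $t$ and the substitution $\sigma$, deriving the type equality from the term case by unpacking the definition of $A \uparrow^{t} X$. The overall shape follows the pattern of Lemmas~\ref{funcsubs} and~\ref{opsub}, with the additional bookkeeping required because the isomorphism $\op^{\uparrow}_{\Gamma,X,M}$ from Lemma~\ref{opfuncinclusion} mediates between contexts that are only canonically isomorphic, not syntactically equal.

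First I would treat the base case for variables. If $x \in X$, then $x \uparrow X = \fun{x}$, and $(\fun{x})^{\op M} = \fun{x}$ since $\fun{x}$ has maximal dimension in $\Gamma \uparrow X$; applying $\op^{\uparrow}_{\Gamma,X,M}$ then yields $\fun{x}$ by construction of the isomorphism in the proof of Lemma~\ref{opfuncinclusion}, and this matches $x^{\op M} \uparrow X = \fun{x}$ as $x^{\op M}=x$. If $x\notin X$, both sides reduce to $x$. For the inductive step on a term $\coh(\Delta : B)[\tau]$, I would split by whether $X_\tau = \varnothing$: when empty, $t\uparrow X = t$ and $t^{\op M}\uparrow X = t^{\op M}$ by Lemma~\ref{disjoint-implies-inclusion-is-identity} applied to both versions. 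When nonempty, the term unfolds as $(\coh(\Delta:B)\uparrow X_\tau)\app{\tau\uparrow X}$, and I would apply $(-)^{\op M}$ using Lemma~\ref{opsub}, then rewrite the resulting context using~\eqref{derived-context-op-func} and the inclusion formula~\eqref{derived-inclusion-op-func} to identify the head with $\coh(\Delta^{\op M}:B^{\op M})\uparrow X_\tau$, and finally invoke the induction hypothesis on the substitution $\tau$.

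For the substitution statement, the empty case is immediate. For $\sub{\sigma, x\mapsto t}$, I would split on whether $x\in X_\sigma$. When $x\notin X_\sigma$ the induction hypotheses on $\sigma$ and $t$ suffice. When $x\in X_\sigma$, the functorialisation introduces $x^-, x^+, \fun{x}$; applying the opposite swaps $x^-$ and $x^+$ exactly when $\dim(t)+1 \in M$, and then composing with $\op^{\uparrow}_{\Gamma,X,M}$ realigns them with the corresponding variables in $(\op^{\uparrow}_{\Delta,X_\sigma,M})^{-1}\cir\sigma^{\op M}\uparrow X$, by the inclusion formula~\eqref{inclusion-op-func} of Lemma~\ref{opfuncinclusion}. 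For the type equality, one checks the source $L_{A,t,X}$ and target $R_{A,t,X}$ separately, splitting on whether $\supp(u)\cap X$ and $\supp(v)\cap X$ are empty, and reducing each case to the term equality just established together with Lemma~\ref{opsub} and Lemma~\ref{opcomp} to commute $(-)^{\op M}$ past the whiskering $\s_{n-1}$.

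The main obstacle will be the sign-flip bookkeeping: whenever $\dim(t) + 1 \in M$, taking opposites swaps the roles of $\inj^-$ and $\inj^+$, and this swap must be absorbed by the isomorphism $\op^{\uparrow}_{\Gamma,X,M}$ via~\eqref{inclusion-op-func}. Each occurrence of a functorial variable, each inclusion into a functorialised pasting context, and each application of the opposite to a composable string must be reconciled consistently. Equation~\eqref{inclusion-op-func} is the key technical ingredient that makes all these swaps cancel precisely against the definition of $\op^{\uparrow}_{\Gamma,X,M}$ on $x^\pm$ and $\fun{x}$, so the bulk of the proof is a careful unfolding showing that these swaps propagate cleanly through both the coherence and substitution cases.
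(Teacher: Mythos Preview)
Your proposal is correct and follows essentially the same approach as the paper: mutual structural induction on terms and substitutions, with the inclusion formula~\eqref{inclusion-op-func} from Lemma~\ref{opfuncinclusion} doing the work of reconciling the $\inj^\pm$ swaps.

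One simplification you are missing for the type case: under the hypotheses ($X$ consists of maximal-dimensional variables and $\depth_X(t)=0$), the term $t$ has dimension $\dim\Gamma$, so its type $A = u \to v$ has $u,v$ of strictly lower dimension, whence $\supp(u)\cap X = \supp(v)\cap X = \varnothing$ automatically. Thus $A\uparrow^t X$ is always just $t\app{\inj^-} \to_A t\app{\inj^+}$, the whiskering branches of $L_{A,t,X}$ and $R_{A,t,X}$ never fire, and Lemma~\ref{opcomp} is not needed. The paper's proof exploits this: it treats the type case directly by applying~\eqref{inclusion-op-func} to $t\app{\inj^\pm}$, splitting only on whether $n+1\in M$. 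Your more general case analysis is not wrong, just unnecessary here.
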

\begin{proof}
  We prove the statements together by mutual induction. First note that by
  definition, the substitution \(\op^{\uparrow}_{\Gamma,X,M}\) acts as the
  identity on every variable that is not in \(X\), and thus for any term whose
  support does not intersect \(X\), we have we have
  \(t\app{\op^{\uparrow}_{\Gamma,X,M}} = t\), and consequently, we have:
  \[
    (t\uparrow X)^{\op M}\app{\op^{\uparrow}_{\Gamma,X,M}} = t^{\op M} = t^{\op M} \uparrow X
  \]
  Thus it suffices to prove the result for terms whose support intersect \(X\).
  For a term \(t=x\) which is a variable, necessarily \(x\in X\) and thus we
  have:
  \[
    (x\uparrow X)^{\op M}\app{\op^{\uparrow}_{\Gamma,X,M}} = \fun x = x\uparrow
    X
  \]
  For the term \(\coh(\Delta : B)[\sigma]\), we denote
  $u = \coh(\Delta : B)[\id_\Delta]$, and
  $v = \coh(\Delta ' : B^{\op M}\app{\gamma_\Delta})[\id_{\Delta'}] = u^{\op
    N}\app{\gamma_\Delta}$
  We note that by induction and Lemmas~\ref{funcsubs} and~\ref{opfuncinclusion},
  we have the equalities:
  \begin{align*}
    \gamma^{-1}_{\Delta \uparrow
    X_\sigma}\cir(\sigma
    \uparrow X)^{\op} \cir(\op^{\uparrow}_{\Gamma,X,M})
    & = (\gamma^{-1}_{\Gamma}\cir (\sigma \uparrow X))^{\op} \\
    (B \uparrow^{u} X_{\sigma})^{\op M}\app{\gamma_{\Delta\uparrow X}}
    &= B^{\op M}\app{\gamma_{\Delta}}\uparrow^{v} X_{\sigma}\\
    (\Delta\uparrow X_{\sigma})' &= \Delta'\uparrow X_{\sigma}
  \end{align*}
  Using this equality together with
  Lemma~\eqref{derived-context-op-func} shows:
  \begin{align*}
    (&t \uparrow X)^{\op M}\app{\op^{\uparrow}_{\Gamma,X,M}}\\
     & = \coh(\Delta'\uparrow X_{\sigma} :
       (B^{\op M}\app{\gamma_{\Gamma}}\uparrow^{v} X))
       [(\gamma^{-1}_{\Delta}\cir (\sigma \uparrow X))^{\op}] \\
     & = t^{\op} \uparrow X
  \end{align*}
  Given a term \(\Gamma\vdash t:A\), of dimension \(n\), by
  Lemma~\ref{opfuncinclusion}, we have, if \(n+1\in M\):
  \begin{align*}
    (&A\uparrow^{t} X)^{\op M}\app{\op^{\uparrow}_{\Gamma,X,M}}\\
    &= t^{\op M}\app{(\inj^{+}_{\Gamma})^{\op M}\cir\op^{\uparrow}_{\Gamma,X,M}}
      \to
      t^{\op M}\app{(\inj^{-}_{\Gamma})^{\op M}\cir\op^{\uparrow}_{\Gamma,X,M}}\\
    &= t^{\op M}\app{\inj^{+}_{\Gamma^{\op M}}}
      \to
      t^{\op M}\app{\inj^{-}_{\Gamma^{\op M}}}\\
    &= A^{\op M} \uparrow^{t^{\op M}} X
  \end{align*}
  On the other hand,  if \(n+1\notin M\):
  \begin{align*}
    (&A\uparrow^{t} X)^{\op M}\app{\op^{\uparrow}_{\Gamma,X,M}}\\
    &= t^{\op M}\app{(\inj^{-}_{\Gamma})^{\op M}\cir\op^{\uparrow}_{\Gamma,X,M}}
     \to
      t^{\op M}\app{(\inj^{+}_{\Gamma})^{\op M}\cir\op^{\uparrow}_{\Gamma,X,M}}\\
    &= t^{\op M}\app{\inj^{-}_{\Gamma^{\op M}}}
      \to
      t^{\op M}\app{\inj^{+}_{\Gamma^{\op M}}}\\
    &= A^{\op M} \uparrow^{t^{\op M}} X
  \end{align*}
  For the empty substitution $\sub{}$, we have:
  \[
    (\sub{} \uparrow \varnothing)^{\op
      N}\cir\op^{\uparrow}_{\varnothing,\varnothing,X} = \sub{} = \sub{}^{\op
      N}\uparrow \varnothing
  \]
  For a substitution of the form
  $\Gamma\vdash \sub{\sigma, x \mapsto t}:(\Gamma,x:A)$, if $ x \notin X$, we
  have by induction:
  \begin{align*}
    (\sub{\sigma, x \mapsto t} \uparrow X)^{\op M}
    &= \sub{(\sigma\uparrow X)^{\op M}, x \mapsto t^{\op M}} \\
    &= \sub{\sigma^{\op M} \uparrow X^{\op M}, x \mapsto t^{\op M}}\\
    &= \sub{\sigma,x\mapsto t}^{\op M} \uparrow X
  \end{align*}
  If $x \in X$ and $\dim(x) + 1 \notin M$, then we have, by induction and
  Lemma~\ref{opfuncinclusion}:
  \begin{align*}
    (&\sub{\sigma, x\mapsto t} \uparrow X)^{\op M}\cir\op^{\uparrow} \\
     &= \left\langle \begin{array}{l}
       (\sigma\uparrow X)^{\op M}\cir\op^{\uparrow},x^{\pm} \mapsto t^{\op M}
       \app{(\inj_\Gamma^\pm)^{\op M} \cir\op^{\uparrow}},
       \\ \fun{x} \mapsto (t \uparrow X)^{\op M}\app{\op^{\uparrow}}
     \end{array}
       \right\rangle
    \\ &= \left\langle
         \begin{array}{l}
           \sigma^{\op M}\uparrow X,x^{\pm}\mapsto t^{\op}\app{\inj_\Gamma^\pm},
           \\
           \fun{x}\mapsto t^{\op M} \uparrow X
         \end{array}\right\rangle \\
     &= \sub{\sigma, x\mapsto t}^{\op M} \uparrow X
  \end{align*}
  Similarly, if $\dim(x)+1\in M$, then:
  \begin{align*}
    (&\sub{\sigma, x\mapsto t} \uparrow X)^{\op M}\cir\op^{\uparrow} \\
     &= \left\langle \begin{array}{l}
       (\sigma\uparrow X)^{\op M}\cir\op^{\uparrow},x^{\pm} \mapsto t^{\op M}
       \app{(\inj_\Gamma^\mp)^{\op M} \cir\op^{\uparrow}},
       \\\fun{x} \mapsto (t \uparrow X)^{\op M}\app{\op^{\uparrow}}
     \end{array}
       \right\rangle
    \\ &= \left\langle
         \begin{array}{l}
           \sigma^{\op M}\uparrow X,x^{\pm}\mapsto t^{\op}\app{\inj_\Gamma^\pm},
           \\
           \fun{x}\mapsto t^{\op M} \uparrow X
         \end{array}\right\rangle \\
     &= \sub{\sigma, x\mapsto t}^{\op M} \uparrow X \tag*{\qedhere}
  \end{align*}
\end{proof}

\section{Interactions with Inverses}
\noindent
We record some lemmas about inverses. First, we present the definition.
\begin{definition}
  We say a coherence term $t = \coh(\Gamma : A)[\sigma]$ is \emph{invertible} if either:
  \begin{itemize}
  \item[(a)]$t$ is a coherence.
  \item[(b)] $t$ is a composite, and $\sigma$ satisfies the invertible image condition.
  \end{itemize}
  Where we say a substitution $\Delta \vdash \sigma : \Gamma$ satisfies the \emph{invertible image condition} if the images of all maximal-dimension variables of $\Gamma$ under $\sigma$ are invertible.
\end{definition}
The work of Benjamin and Markakis~\cite{benjamin_invertible_2024} shows that these are exactly the
equivalences of $\catt$, justifying the definition.
\begin{definition}
    We define, by mutual induction:
    \begin{itemize}
        \item Let $t= \coh(\Gamma : u \to v)[\sigma]$ be an invertible term. Let $n = \dim(t)$. We define:
        \[
        t^{-1} := \begin{cases}
            \coh(\Gamma : v \to u)[\sigma] &\text{if (a)} \\
            \coh(\Gamma' : A^{\op \{n\}}\app{\gamma})[\gamma^{-1}\cir\overline{\sigma}]&\text{if (b)}
        \end{cases}
        \]
        \item Let $\Delta \vdash \sigma : \Gamma$ be a substitution satisfying the invertible image condition. Let $n = \dim(\Gamma)$. We define:
        \begin{align*}
            \Delta \vdash \overline{\sigma} &: \Gamma^{\op\{n\}} \\
            x\app{\overline{\sigma}}&:= \begin{cases}
                (x\app{\sigma})^{-1} &\dim(x)=n \\
                x\app{\sigma}&\dim(x)<n
            \end{cases}
        \end{align*}
    \end{itemize}
\end{definition}
\begin{lemma}\label{invsubs}
    If $\Delta \vdash t : A$ is invertible, and $\Gamma \vdash \sigma : \Delta$ is a substitution, then $t \app \sigma$ is invertible, and the following also holds:
    \[
    (t\app{\sigma})^{-1}=t^{-1}\app{\sigma}
    \]
    If $\Gamma \vdash \sigma : \Delta$ amd $\Delta \vdash \tau : \Theta$ are
    substitutions, and $\tau$ satisfies the invertible image condition, then so
    does $\tau \cir \sigma$, and the following also holds:
    \[
    \overline{\tau \cir \sigma} = \overline{\tau} \cir \sigma
    \]
\end{lemma}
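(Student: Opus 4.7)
The plan is to prove the two statements simultaneously by mutual structural induction, with the first statement applied to components of $\tau$ inside a coherence to derive the second, and the second statement used inside case (b) of the first to commute inversion past substitution. A natural well-founded order is the pair (syntactic size of $t$, syntactic size of $\tau$).

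For the first statement, I would split on whether $t$ is a coherence or a composite. In case (a), writing $t = \coh(\Gamma : u \to v)[\rho]$, the substitution $t\app{\sigma} = \coh(\Gamma : u \to v)[\rho \cir \sigma]$ is again a coherence (the fullness conditions on $u$ and $v$ are properties of the pair $(\Gamma, A)$, independent of $\sigma$). Then both $(t\app{\sigma})^{-1}$ and $t^{-1}\app{\sigma}$ unfold directly to $\coh(\Gamma : v \to u)[\rho \cir \sigma]$. In case (b), $t = \coh(\Gamma : A)[\rho]$ with $\rho$ satisfying the invertible image condition. To see that $t\app{\sigma} = \coh(\Gamma : A)[\rho \cir \sigma]$ is still invertible as a composite, I apply the second statement of the lemma to $\rho$ and $\sigma$, obtaining both that $\rho \cir \sigma$ satisfies the invertible image condition and the equality $\overline{\rho \cir \sigma} = \overline{\rho} \cir \sigma$. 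Then
\[
  (t\app{\sigma})^{-1} = \coh(\Gamma' : A^{\op\{n\}}\app{\gamma})[\gamma^{-1}\cir\overline{\rho \cir \sigma}]
\]
and
\[
  t^{-1}\app{\sigma} = \coh(\Gamma' : A^{\op\{n\}}\app{\gamma})[\gamma^{-1}\cir\overline{\rho}\cir \sigma],
\]
which coincide by the computed equality on $\overline{\rho \cir \sigma}$.

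For the second statement, let $x$ be a locally-maximal variable of $\Theta$. If $\dim(x) < n$, then $x\app{\overline{\tau \cir \sigma}} = x\app{\tau \cir \sigma} = x\app{\tau}\app{\sigma} = x\app{\overline{\tau}}\app{\sigma} = x\app{\overline{\tau}\cir\sigma}$. If $\dim(x) = n$, then $x\app{\tau}$ is invertible by hypothesis, so by the first statement of the lemma (applied inductively, since $x\app{\tau}$ is a strict subterm of $\tau$), $x\app{\tau}\app{\sigma} = x\app{\tau \cir \sigma}$ is invertible and satisfies $(x\app{\tau \cir \sigma})^{-1} = (x\app\tau)^{-1}\app{\sigma}$. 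This simultaneously shows that $\tau \cir \sigma$ satisfies the invertible image condition and that the two substitutions $\overline{\tau \cir \sigma}$ and $\overline{\tau}\cir \sigma$ agree on all locally-maximal variables; agreement on lower-dimensional variables follows because both sides restrict to $\tau \cir \sigma$ there. Since substitutions are determined by their action on locally-maximal variables of the codomain, this yields the equality.

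The main subtle point will be case (b) of the first statement: one must check that the variable-reordering isomorphism $\gamma$ depends only on the pasting context $\Gamma$ and the dimension $n$, and not on the substitution, so that it is the same $\gamma$ appearing in $(t\app{\sigma})^{-1}$ and in $t^{-1}\app{\sigma}$. Once this is confirmed (it follows from the definition of $\gamma$ as the unique reordering isomorphism for $\Gamma^{\op\{n\}}$), the algebraic identity reduces cleanly to the equality $\overline{\rho \cir \sigma} = \overline{\rho}\cir \sigma$ supplied by the mutual inductive hypothesis.
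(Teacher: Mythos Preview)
Your proposal is correct and follows essentially the same approach as the paper: a mutual induction between the two statements, with case (a) handled by direct unfolding and case (b) reduced to the equality $\overline{\rho\cir\sigma}=\overline{\rho}\cir\sigma$ supplied by the second statement. The only cosmetic difference is that the paper proves the second statement by structural induction on the substitution $\tau$ (base case $\langle\rangle$, then $\langle\tau,x\mapsto t\rangle$), whereas you check agreement variable-by-variable; these are equivalent, and your observation about $\gamma$ being independent of the outer substitution is exactly the implicit point the paper relies on.
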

\begin{proof} We prove the two
  statements by mutual induction. For the term
  $\coh(\Theta : u \to v)[\tau]$, if it satisfies (a), then so does
  $\coh(\Theta : u \to v)[\tau \cir \sigma] = t\app{\sigma}$, so it is also
  invertible, and we have:
  \[
    (t\app{\sigma})^{-1}=\coh(\Theta : v \to u)[\tau \cir
    \sigma]=t^{-1}\app{\sigma}
  \]
  If $\coh(\Theta : u \to v)[\tau]$ satisfies (b), then $\tau$ satisfies the
  invertible image condition, and so by the inductive hypothesis for the second
  statement, so does $\tau \cir \sigma$, so
  $\coh(\Theta : u \to v)[\tau \cir \sigma]=t\app{\sigma}$ is invertible.
  Moreover, we have:
    \begin{align*}
      (t\app{\sigma})^{-1}
      &= \coh(\Theta' : (u \to
        v)^{\op\{n\}}\app{\gamma})[\gamma^{-1}\cir\overline{\tau \cir \sigma}]
      \\
      &= \coh(\Theta' : (u \to
        v)^{\op\{n\}}\app{\gamma})[\gamma^{-1}\cir\overline{\tau} \cir
        \sigma]\\
      &= t^{-1}\app{\sigma}
    \end{align*}

    For the empty substitution $\sub{}$ which trivially satisfies the invertible
    image condition, we have:
    \[
      \overline{\sub{}\cir \sigma} =
      \overline{\sub{}}=\sub{}=\sub{}\cir\sigma=\overline{\sub{}}\cir\sigma
    \]
    For the substitution $\sub{\tau,x \mapsto t}$, then either $\dim(x)=n$ or
    $\dim(x)<n$. In the first case, we have by induction:
    \begin{align*}
      \overline{\sub{\tau,x\mapsto t} \cir \sigma}
      &=\sub{\overline{\tau\cir\sigma},x\mapsto (t\app{\sigma})^{-1}}\\
      &=\sub{\overline{\tau}\cir\sigma,x\mapsto t^{-1}\app{\sigma}} \\
      &= \overline{\sub{\tau,x\mapsto t}}\cir \sigma
    \end{align*}
    In the second case, we have by induction:
    \begin{align*}
      \overline{\sub{\tau,x\mapsto t} \cir \sigma}
      &=\sub{\overline{\tau\cir\sigma},x\mapsto
        t\app{\sigma}}\\
      &=\sub{\overline{\tau}\cir\sigma,x\mapsto t\app{\sigma}} \\
      &= \overline{\sub{\tau,x\mapsto t}}\cir \sigma \tag*{\qedhere}
    \end{align*}
  \end{proof}

\end{document}